\numberwithin{equation}{section}
\newtheorem{theorem}{Theorem}[section]
\newtheorem{corollary}[theorem]{Corollary}
\newtheorem{lemma}[theorem]{Lemma}
\newtheorem{proposition}[theorem]{Proposition}
\newtheorem{definition}[theorem]{Definition}
\newtheorem{remit}[theorem]{Remark}
\newtheorem{example}[theorem]{Examples}
\newtheorem{conjecture}[theorem]{Conjecture}
\numberwithin{equation}{section}
\newcommand{\pp}{\mathbb{P}}
\newcommand{\cc}{\mathbb{C}}
\newcommand{\rr}{\mathbb{R}}
\newcommand{\zz}{\mathbb{Z}}
\newcommand{\hh}{\mathbb{H}}
\newcommand{\cO}{\mathcal{O} }
\newcommand{\hk}{hyperk\"ahler }
\newcommand{\ii}{\mathbf{i} }
\newcommand{\jj}{\mathbf{j} }
\newcommand{\kk}{\mathbf{k} }
\newcommand{\grad}{{\rm grad}}
\newcommand{\cM}{\mathcal{M}}
\newcommand{\liet}{\mathfrak{t}}
\newcommand{\liets}{\mathfrak{t}^*}
\begin{document}

\title{On the cohomology of hyperk\"ahler quotients}
\date{}

\author{Lisa Jeffrey}
\address{Department of Mathematics, University of Toronto, Canada}
\email{jeffrey@math.toronto.edu}

\author{Young-Hoon Kiem}
\address{Department of Mathematics and Research Institute
of Mathematics, Seoul National University, Seoul 151-747, Korea}
\email{kiem@math.snu.ac.kr}

\author{Frances Kirwan}
\address{Mathematical Institute, Oxford University, UK}
\email{kirwan@maths.ox.ac.uk}

\thanks{LCJ was partially supported by a grant
from NSERC, YHK was partially supported by KOSEF grant
R01-2007-000-20064-0 and FCK was supported by an EPSRC fellowship.}

\begin{abstract}
This paper gives a  partial desingularization construction for hyperk\"ahler
quotients  and a criterion for the surjectivity of an analogue of
the Kirwan map to the  cohomology of
hyperk\"ahler quotients. This criterion is applied to some linear actions
on hyperk\"ahler vector spaces.

\end{abstract}

\maketitle

\section{Introduction}
Hyperk\"ahler manifolds are manifolds $M$ equipped with a Riemannian
metric $g$ and three independent complex structures $\ii,\jj,\kk$
compatible with the metric  which satisfy $\ii\jj=\kk=-\jj\ii$. They
correspondingly have three symplectic forms
$\omega_1,\omega_2,\omega_3$, or one real symplectic form $\omega_1$
and one complex symplectic form $\omega_\cc=\omega_2+i\omega_3$.
Suppose a compact connected Lie group $K$ acts on $M$ preserving the
metric and the symplectic forms. We say the action is
\emph{Hamiltonian} if there are moment maps $\mu_i$ for each
$\omega_i$. It has been an outstanding problem how much of the
package of properties of Hamiltonian group actions on symplectic
manifolds extends to
hyperk\"ahler quotients $$M/\!/\!/K:=\mu^{-1}(0)/K$$ where $\mu=(\mu_1,\mu_2,\mu_3)$. 

The first result of this paper is that the partial
desingularization construction of \cite{Kir2} extends to
hyperk\"ahler quotients. For this, we fix a complex structure, say
$\ii$, which gives us a holomorphic moment map $\mu_\cc = \mu_2 +
\sqrt{-1} \mu_3:M \to \mathfrak{k}^*\otimes_\rr \cc$, and note
that with respect to this complex structure $M/\!/\!/K$ can be
identified with the K\"{a}hler quotient of the complex subvariety
$W=\mu_\cc^{-1}(0)=\mu_2^{-1}(0)\cap \mu_3^{-1}(0)$ of $M$.

In \cite{Kir2}, a canonical way of partially resolving the
singularities of the K\"ahler quotient $M/\!/K=\mu_1^{-1}(0)/K$ is
described. The most singular points of $M/\!/K$ correspond to the
points in $\mu_1^{-1}(0)$ of largest stabilizer groups in $K$.
We
blow up $M^{ss}$ along the locus of points where
the  stabilizer has maximal
dimension; this locus turns out to be smooth.
 Here $M^{ss}$ denotes the
open subset of \emph{semistable} points in $M$,
in other words the set of points  whose $K^\cc$-orbit
closure meets $\mu_1^{-1}(0)$.
 Let $M_1$ be the
result 
of the
blow-up.
Then one finds that the maximal dimension of stabilizers
on  $M_1^{ss}$ is strictly smaller than the corresponding maximal
dimension  on $M^{ss}$.
 We repeat
this process until we cannot find a semistable point whose
stabilizer is infinite. The
final result
 of this process is an analytic
variety $\widetilde{M^{ss}/\!/K}$ which has at worst orbifold
singularities, which we call the \emph{partial desingularization}
of the K\"ahler quotient.

In order to find a partial desingularization of a \hk quotient, we
take the proper transform $\widetilde{W/\!/K}$ of $W/\!/K$ in
$\widetilde{M/\!/K}$. By construction, we may describe
$\widetilde{W/\!/K}$ by the following blow-up process.
Let $H$ be  the stabilizer of a point in $\mu^{-1}(0)=W\cap
\mu_1^{-1}(0)$ for which the dimension
is maximal possible
among such, and let $H_0$ be the identity component of $H$.
Let $Z_{H_0}$ be the fixed point set of $H_0$ in $W$. We blow up
$W^{ss}$ along $GZ_{H_0}^{ss}$ to get a variety $W_1$. We repeat
this process until, after finitely many steps, there are no
semistable points with infinite stabilizers. Then we take the
K\"ahler quotient of the resulting variety $\widetilde{W}$. This is
our partial desingularization $\widetilde{W/\!/K}$.

Since $W=\mu_\cc^{-1}(0)$ can be very singular, it is by no means
obvious why the proper transform $\widetilde{W/\!/K}$ should have
at worst orbifold singularities. However, we prove the following.
\begin{theorem} 
Suppose there is at least one point in $\mu^{-1}(0)$ whose
stabilizer is finite. The above blow-up procedure produces a
surjective analytic map
$$\Pi:\widetilde{W/\!/K}\to W/\!/K$$ such that
\begin{enumerate} \item $\Pi$ is an isomorphism on the open subset of
the orbits of $x\in \mu^{-1}(0)$ where $d\mu_1$ is
surjective;
\item $\widetilde{W/\!/K}$ has at worst orbifold
singularities.\end{enumerate}
\end{theorem}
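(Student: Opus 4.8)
\emph{Strategy.} The plan is to prove the theorem by induction on the maximal dimension $d=d(W^{ss})$ of a stabilizer in $K$ of a semistable point of $W$, using a holomorphic slice theorem to reduce each inductive step to a model situation built from the hyperk\"ahler quotient of a linear action of a smaller group. When $d=0$ there is nothing to blow up, and one must instead check directly that $W^{ss}$ is already a smooth manifold on which $K$ acts with finite stabilizers; when $d>0$ one performs a single blow-up, shows that the maximal stabilizer dimension strictly decreases, shows that the result is again a partial-desingularization problem of the same kind (for a possibly smaller group), and applies the inductive hypothesis. Surjectivity of $\Pi$ and its behaviour over the locus where $d\mu_1$ is onto should then be read off from the construction.

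\emph{The base case and the role of $d\mu_\cc$.} The elementary input is that for $x\in W$ the cokernel of $d\mu_{\cc,x}\colon T_xM\to\mathfrak{k}^*\otimes\cc$ is naturally $(\mathfrak{k}_x)^*\otimes\cc$, where $\mathfrak{k}_x$ is the Lie algebra of the stabilizer of $x$; equivalently $d\mu_{1,x}$ is surjective iff $\mathfrak{k}_x=0$ iff $d\mu_{\cc,x}$ is surjective. So at a semistable point with finite stabilizer, $W=\mu_\cc^{-1}(0)$ is a smooth complex submanifold of $M$ of the expected codimension, cut out transversally. Hence if $d(W^{ss})=0$ then $W^{ss}$ is a smooth manifold with locally free (up to finite groups) $K$-action, $\widetilde{W/\!/K}=W/\!/K=(W^{ss}\cap\mu_1^{-1}(0))/K$ is an orbifold, and $\Pi$ is the identity. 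This also identifies the open set $U\subseteq\mu^{-1}(0)$ where $d\mu_1$ is onto with the set of points having finite stabilizer, which is non-empty by hypothesis, and shows $W$ and $W/\!/K$ are already smooth (as an orbifold) along the image of $U$.

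\emph{The inductive step.} Assume $d=d(W^{ss})>0$ and let $H$ be a maximal-dimensional stabilizer of a point $x\in\mu^{-1}(0)\subseteq W^{ss}$, with identity component $H_0$. The first step is a Marle--Guillemin--Sternberg normal form in its holomorphic form for the $\ii$-K\"ahler structure: a $K^\cc$-invariant neighbourhood of $K^\cc x$ in $M^{ss}$ should be $K^\cc$-equivariantly biholomorphic to a neighbourhood of the zero section in $K^\cc\times_{H^\cc}\big((\mathfrak{k}/\mathfrak{h})^*\otimes\cc\oplus N\big)$, carrying $\mu_\cc$ to the standard model and $W$ to $K^\cc\times_{H^\cc}\big(\{0\}\oplus N_0\big)$, where $N$ is the symplectic slice, a hyperk\"ahler $H$-module, and $N_0=(\mu^N_\cc)^{-1}(0)$ for the induced holomorphic $H^\cc$-moment map. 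Locally then $W^{ss}/\!\!/K^\cc\cong N_0/\!\!/H^\cc$ is itself an open piece of a hyperk\"ahler quotient of a linear action of the lower-dimensional group $H$, and, since $Z_{H_0}=W^{H_0}$ meets the slice in $N_0\cap N^{H_0}=N_0^{H_0}$, the blow-up of $W^{ss}$ along $K^\cc Z_{H_0}^{ss}$ corresponds on the slice to blowing up $N_0$ along $N_0^{H_0}$. One then transfers, essentially verbatim from \cite{Kir2} since those arguments are local on the slice and depend only on the combinatorics of the linearization (so the singularity of $N_0$ does no harm), the two facts: (i) every semistable point of the blow-up $W_1$ has stabilizer of dimension $<d$ --- off the exceptional divisor the stabilizer lies in one of dimension $<d$, while over $K^\cc Z_{H_0}^{ss}$ it fixes a point of the projectivised normal cone, on which $H_0$ has no fixed line (the normal directions being exactly the non-fixed ones), so it meets each conjugate of $H_0$ in a proper subgroup; and (ii) the semistable locus of $W_1$ surjects $K^\cc$-equivariantly onto $W^{ss}$, so that $K^\cc$-quotients give a surjective analytic map $W_1/\!/K\to W/\!/K$ that is an isomorphism over the image of $U$ (the centre, having positive-dimensional stabilizers, being disjoint from $U$). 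By (i) and the local model, $W_1^{ss}$ is again of the kind to which the theorem applies, now with smaller $d$, so by induction its iterated partial desingularization is an orbifold with the required surjection; composing with the above gives $\Pi\colon\widetilde{W/\!/K}\to W/\!/K$ with properties (1) and (2). Termination is clear since $d(W^{ss})$ is a strictly decreasing non-negative integer.

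\emph{The main obstacle.} The substantive part is the holomorphic slice theorem together with the precise identification, in the $\ii$-K\"ahler picture, of $W$ with the slice model $K^\cc\times_{H^\cc}(\{0\}\oplus N_0)$ and of the globally defined blow-up centre $K^\cc Z_{H_0}^{ss}$ with its slice-local counterpart --- this is what makes the inductive structure visible and allows Kirwan's two blow-up lemmas to be imported. Once that is set up, everything else --- the termination, the stabilizer drop, the surjectivity, and the orbifold conclusion via the cokernel computation for $d\mu_\cc$ --- is routine.
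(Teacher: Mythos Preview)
Your overall strategy --- induction on the maximal stabilizer dimension, with a holomorphic slice theorem supplying a local model $K^\cc\times_{H^\cc}(\{0\}\oplus N_0)$ for $W$ near a worst orbit --- is the same as the paper's, and your identification of that local model as the key step is correct. But the inductive loop does not close the way you describe, and this is a genuine gap.

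The problem is your claim that after one blow-up ``$W_1^{ss}$ is again of the kind to which the theorem applies.'' It is not. The hypothesis of the theorem is that $W=\mu_\cc^{-1}(0)$ for a hyperk\"ahler moment map on a hyperk\"ahler manifold $M$; but the blow-up $M_1$ of $M$ is not hyperk\"ahler (blowing up destroys the hyperk\"ahler structure --- this is precisely why the partial desingularization depends on the choice of $\ii$ and is not itself hyperk\"ahler), so $W_1$ is not cut out by any holomorphic moment map of the required type. Near a point of the exceptional divisor there is therefore no reason, from what you have written, to expect a local model of the same shape, and no version of the inductive hypothesis applies there. Saying ``the singularity of $N_0$ does no harm'' and invoking \cite{Kir2} sidesteps exactly the issue: the arguments of \cite{Kir2} are for a nonsingular ambient variety, the singularity of $N_0$ is what you must resolve, and you have not explained why blowing up along $N_0^{H_0}$ improves it.

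The paper closes the induction differently. It never tries to treat $W_1$ as a new instance of the theorem. Instead it extracts from the local model the crucial extra fact that the slice moment map $\mu_\cc^N$ is \emph{homogeneous quadratic} (this is where the hyperk\"ahler hypothesis, via a holomorphic equivariant Darboux theorem, is actually used), so $N_0$ is an affine quadric cone times $N^{H_0}$. A direct lemma then shows that blowing up a quadric cone at its vertex yields a line bundle over the exceptional divisor $E$; in particular $E$ is normally nonsingular in the blow-up. The induction is run on the open complement $W^{ss}\setminus GZ_{H_0}^{ss}$ --- which \emph{is} still an open subset of $\mu_\cc^{-1}(0)$, so the inductive hypothesis applies verbatim --- and a separate elementary argument (the subsequent blow-up centres being conic over $E$) shows the iterated blow-ups remain normally nonsingular along each new exceptional divisor, hence the final $\widetilde{W}$ is smooth near the relevant locus. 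What your argument is missing is precisely this quadratic-singularity lemma and the normal-nonsingularity analysis along the exceptional locus.
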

%
The advantage of this partial desingularization construction is that
it works even when the hyperk\"{a}hler moment map $\mu$ has no
central regular value close to zero (in contrast to the
method given by perturbing the hyperk\"{a}hler moment map
by adding a central constant). The disadvantage, on the other
hand, is that $\widetilde{W/\!/K}$ does not inherit a
hyperk\"{a}hler structure everywhere, and depends (only) on the
choice of complex structure $\ii$.

\bigskip

Next, we consider the restriction map, often called the Kirwan map,
\[
\kappa:H^*_K(M)\longrightarrow H^*_K(\mu^{-1}(0))\cong
H^*(\mu^{-1}(0)/K)
\]
from the equivariant cohomology of $M$ to the ordinary cohomology
of the hyperk\"ahler quotient $M/\!/\!/K=\mu^{-1}(0)/K$, when $0$
is a regular value of $\mu$.
(All cohomology groups in this paper have
rational coefficients.) More generally when $0$ is not a regular value
of $\mu$ we can consider the restriction map $H^*_K(M)\longrightarrow H^*_K(\mu^{-1}(0))$
and try to use the partial desingularization construction described above to
relate $H^*_K(\mu^{-1}(0))$ to the intersection cohomology of the hyperk\"{a}hler quotient
$\mu^{-1}(0)/K$ (cf. Remark 4.9 below).
In the setting of symplectic quotients the analogous map $\kappa$ is
surjective (\cite{Kir} Theorem 5.4), at least when the moment map is
proper, so a natural question is
whether or not $\kappa$ is surjective
 in the hyperk\"{a}hler setting. In the preprint \cite{k6}, the
third named author attempted to study this by modifying the ideas
and techniques in \cite{Kir}, but there is a crucial sign error underlying
the argument of \cite{k6} Lemma 4.2 which invalidates the approach of
\cite[\S4]{k6}. In this paper, we follow the approach in \S5 of
\cite{k6} and attempt to prove surjectivity in two stages:
\begin{enumerate}
\item surjectivity of the restriction $H^*_K(M)\to H^*_K(W)$,
where $W=\mu_\cc^{-1}(0)$,
by using equivariant Morse theory with respect to $-\|\mu_\cc\|^2$,
\item surjectivity of the restriction $H^*_K(W)\to
H^*_K(\mu^{-1}(0))$ by using equivariant Morse theory with respect to
$-\|\mu_1\|^2$.\end{enumerate}
Here the norm is induced from a fixed $K$-invariant inner product on
the Lie algebra $\mathfrak{k}$ of $K$, which we will use to identify
$\mathfrak{k}$ with its dual throughout.

To deal with the first stage, we reproduce the following theorem of
the third named author from \cite[\S5]{k6}.
\begin{theorem} \label{th1.2-new}
The Morse stratification of the function $\|\mu_\cc\|^2$ is
$K$-equivariantly perfect if $-\|\mu_\cc\|^2 $ is flow-closed.
\end{theorem}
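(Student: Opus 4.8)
The plan is to mimic the standard Atiyah–Bott/Kirwan argument that the gradient flow of $-\|\mu\|^2$ gives a $K$-equivariantly perfect stratification, but adapted to the (possibly singular) setting of $\|\mu_\cc\|^2$ on $M$. First I would recall the structure of the Morse stratification $\{S_\beta : \beta\in\mathcal B\}$ indexed by the adjoint orbits of the values $\beta$ of $\mu_\cc$ at the critical sets: each $S_\beta$ is the set of points whose path of steepest descent for $-\|\mu_\cc\|^2$ has limit set inside the critical subset $C_\beta$ lying over the orbit of $\beta$. The hypothesis that $-\|\mu_\cc\|^2$ is flow-closed is exactly what is needed to guarantee that these limits exist and that the $S_\beta$ are locally closed $K$-invariant subsets of $M$ partitioning $M$ into a stratification with $S_0=\mu_\cc^{-1}(0)$ as the open stratum (here I would cite the earlier discussion in the paper establishing flow-closedness and the basic properties of the stratification). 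The goal is then to show the Thom–Gysin sequences
$$
\cdots \to H^{*-\lambda_\beta}_K(S_\beta)\to H^*_K(U_\beta)\to H^*_K(U_\beta\setminus S_\beta)\to\cdots
$$
all split, where $U_\beta$ is an open neighbourhood in which $S_\beta$ is closed and $\lambda_\beta$ is the codimension (real index) of $S_\beta$; perfection of the stratification is equivalent to this splitting for every $\beta$, by the usual induction over the partial order on $\mathcal B$.

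The key step is to prove that the equivariant Euler class of the normal bundle $\nu_\beta$ to $S_\beta$ in $M$ is not a zero divisor in $H^*_K(S_\beta)$ — this is what forces the long exact sequence to split. I would handle this by retracting $S_\beta$ $K$-equivariantly onto the critical set $C_\beta$, which fibres $K$-equivariantly over $K\cdot\beta\cong K/\mathrm{Stab}(\beta)$ with fibre $Z_\beta^{min}$, a minimising critical set for the action of the stabiliser; this reduces the Euler-class computation to a statement over $Z_\beta^{min}$ for the group $\mathrm{Stab}(\beta)$. On the normal directions the circle generated by $\beta$ acts with weights that are all strictly positive along the "negative" normal bundle, so the equivariant Euler class becomes (after restriction to the maximal torus) a product of linear forms each with a nonzero $\beta$-component; such a product is a non-zero-divisor in $H^*_K(C_\beta)$ by the localization theorem / the fact that $H^*_K$ of a compact space is a torsion-free module over the polynomial part detected by $\beta$. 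This is the step I expect to be the main obstacle: in the symplectic case one argues this cleanly because $\mu$ is a genuine moment map with well-understood critical sets, whereas here $\mu_\cc$ is holomorphic and $\|\mu_\cc\|^2$ need not be Morse–Bott, so one must work with the minimal Morse stratification and verify that the weight-positivity of the normal bundle survives, using the Kähler identities relating the Hessian of $\|\mu_\cc\|^2$ to the complex structure and the linearisation of the $K^\cc$-action.

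Finally I would assemble the induction: order $\mathcal B$ so that $\|\beta\|$ is non-decreasing, set $M_{\leq\beta}=\bigcup_{\|\gamma\|\leq\|\beta\|}S_\gamma$, and use the splitting of each Thom–Gysin sequence together with the inductive hypothesis that the stratification restricted to $M_{<\beta}$ is equivariantly perfect to conclude that
$$
P_t^K(M) = \sum_{\beta\in\mathcal B} t^{\lambda_\beta} P_t^K(S_\beta),
$$
which is the assertion that the stratification is $K$-equivariantly perfect. The only subtlety beyond the classical case is bookkeeping to ensure the sums converge and the strata are of finite type — this follows from flow-closedness, which guarantees that only finitely many strata meet any compact set and that each $S_\beta$ deformation retracts onto the compact(ish) critical set $C_\beta$, so the equivariant Poincaré series are well-defined formal power series and the additivity above makes sense termwise.
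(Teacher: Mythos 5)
Your high-level framework (Morse stratification of $\|\mu_\cc\|^2$, Thom--Gysin sequences, non-zero-divisor equivariant Euler classes, induction over the indexing set) is indeed the machinery the paper invokes from Kirwan's book, so the skeleton is right. But there is a genuine gap at exactly the step you yourself flag as ``the main obstacle'' and then do not carry out: you must prove that $\|\mu_\cc\|^2$ is a \emph{minimally degenerate} Morse function, and this is essentially the entire content of the paper's proof. Flow-closedness only controls limits of trajectories; it does not give you that the strata are locally closed smooth submanifolds with well-defined normal bundles and indices, nor that each stratum retracts equivariantly onto its critical set with the weight structure you need for the Euler-class argument. The paper establishes this by (i) showing that at a critical point, writing $\beta_2=\mu_2(x)$, $\beta_3=\mu_3(x)$, one has $(\beta_2)_x=(\beta_3)_x=0$ and, crucially, $[\beta_2,\beta_3]=0$ (this uses holomorphicity of $\mu_\cc$ with respect to $\ii$ and is itself a lemma, not automatic); (ii) classifying the critical locus as the disjoint union of sets $C_{\beta_2,\beta_3}=K(Z_{\beta_2,\beta_3}\cap\mu_2^{-1}(\beta_2)\cap\mu_3^{-1}(\beta_3))$ indexed by Weyl-orbit representatives of commuting \emph{pairs} $(\beta_2,\beta_3)$ --- your indexing by single adjoint orbits and ``the circle generated by $\beta$'' glosses over this; the relevant group is the subtorus generated by both $\beta_2$ and $\beta_3$; and (iii) exhibiting $KZ_{\beta_2,\beta_3}$ as a minimizing manifold by proving the Hessian of $\|\mu_\cc\|^2$ is nondegenerate on a complement of $T_xKZ_{\beta_2,\beta_3}$.

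Step (iii) is where the real work lies and where your sketch offers no argument. The paper splits $T_xM$ into the orthogonal complement $V_x$ of $T_xZ_{\beta_2,\beta_3}+\mathfrak{k}_x+\ii\mathfrak{k}_x+\jj\mathfrak{k}_x+\kk\mathfrak{k}_x$, shows that on $V_x$ the Hessian agrees with that of the function $\mu_2^{(\beta_2)}+\mu_3^{(\beta_3)}$, proves that this function is a nondegenerate Morse--Bott function (via the anticommutation $H_2H_3+H_3H_2=0$ of the two Hessians, so $H^2=H_2^2+H_3^2$), and then handles the remaining directions by lifting $\tfrac12 H_{23}$ to an explicit $4\times 4$ matrix of operators on $\mathfrak{k}^{\oplus 4}$ built from $A_x$, $\mathrm{Ad}\,\beta_1$, $\mathrm{Ad}\,\beta_2$, $\mathrm{Ad}\,\beta_3$ and analysing its kernel via an adjugate computation. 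Your appeal to ``K\"ahler identities'' and weight-positivity of the negative normal bundle presupposes a Morse--Bott-type local structure that is precisely what has to be proved; without an argument of the above kind, the splitting of the Thom--Gysin sequences and the termwise Poincar\'e series identity cannot be justified. So the proposal identifies the correct strategy but leaves the decisive nondegeneracy/minimal-degeneracy verification unproven.
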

Here, a function $f$ is \emph{flow-closed} if the gradient flow of
$f$ from any point is contained in a compact set. Because
\cite{k6} contains errors and is unpublished, we provide a full
corrected proof of Theorem \ref{th1.2-new} in this paper.

There still remain two major difficulties in this approach. The
first is the convergence issue of the gradient flows of the norm
squares of the moment maps (that is, whether the norm squares are {
flow-closed}),
and the second is that $W$ is in general singular. We avoid the
second difficulty by using a generic rotation of the \hk frame of
$\ii,\jj,\kk$. The three complex structures $\ii,\jj,\kk$ give us in
fact an $S^2$-family of complex structures and the triple
$(\ii,\jj,\kk)$ is nothing but a choice of an orthonormal frame. We
show that $W=\mu_\cc^{-1}(0)$ is smooth if we use a frame which is
general in the sense of Definition \ref{defn4.3} and $0$ is a
regular value of $\mu$; moreover, even when $0$ is not a regular
value of $\mu$, for a general frame there are no non-minimal
critical points of $\|\mu_1\|^2$ in $W$. Together with Theorem
\ref{th1.2-new}, this enables us to formulate a criterion for
surjectivity of $\kappa$.
\begin{theorem}
For a general choice of hyperk\"ahler frame
$\{\mathbf{i},\mathbf{j},\mathbf{k}\}$, 
if
\begin{enumerate}\item
$-\|\mu_\cc\|^2$ on $M$ is flow-closed, and
\item $-\|\mu_1\|^2$ on $W$ is flow-closed
\end{enumerate} then the restriction map
$$H^*_K(M)\to H^*_K(\mu^{-1}(0))$$ 
is surjective.\end{theorem}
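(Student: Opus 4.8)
The plan is to factor the restriction map as a composite
$$H^*_K(M)\xrightarrow{r_1}H^*_K(W)\xrightarrow{r_2}H^*_K(\mu^{-1}(0)),\qquad W=\mu_\cc^{-1}(0),$$
where both $r_1$ and $r_2$ are restriction to a $K$-invariant subset (using $\mu^{-1}(0)=W\cap\mu_1^{-1}(0)\subseteq W\subseteq M$), and then to prove that $r_1$ is surjective and that $r_2$ is in fact an isomorphism. We may assume $\mu^{-1}(0)\neq\emptyset$, since otherwise there is nothing to prove. Fix once and for all a general hyperk\"ahler frame in the sense of Definition \ref{defn4.3}, and recall the two facts established earlier: for such a frame $\|\mu_1\|^2$ has no non-minimal critical points on $W$, and $W$ is smooth whenever $0$ is a regular value of $\mu$.

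\emph{Stage 1 (surjectivity of $r_1$).} Here I would invoke Theorem \ref{th1.2-new} directly. Hypothesis (1) says $-\|\mu_\cc\|^2$ is flow-closed, so the Morse stratification $\{S_\beta\}$ of $\|\mu_\cc\|^2$ on $M$ is $K$-equivariantly perfect. Since $\mu^{-1}(0)\subseteq W=\mu_\cc^{-1}(0)$ is nonempty, the minimum value of $\|\mu_\cc\|^2$ is $0$ and its minimum set is exactly $W$; the associated minimal stratum $S_0$ is the open $\mu_\cc$-semistable subset of $M$, which $K$-equivariantly deformation retracts onto $W$ along the gradient flow (convergence of the flow being exactly what flow-closedness provides). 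Equivariant perfection means the equivariant Thom--Gysin long exact sequences of the stratification break into short exact sequences; peeling off the strata from the top of the indexing poset downwards then shows that $H^*_K(M)\to H^*_K(S_0)$ is surjective, and composing with the retraction isomorphism $H^*_K(S_0)\cong H^*_K(W)$ gives surjectivity of $r_1$.

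\emph{Stage 2 ($r_2$ is an isomorphism).} Now I would run equivariant Morse theory for $\|\mu_1\|^2$ restricted to $W$. The gradient flow of $\|\mu_1\|^2$ on $M$ is generated by the vector field $x\mapsto\ii\,\xi_{\mu_1(x)}$, where $\xi_\zeta$ is the vector field generated by $\zeta\in\mathfrak k$; since the infinitesimal $K$-action is tangent to $W$ and $W$ is $\ii$-holomorphic, this flow is tangent to $W$, so the Morse theory of $\|\mu_1\|^2$ descends to $W$ (and takes place on a manifold when $0$ is a regular value of $\mu$). By the choice of general frame, $\|\mu_1\|^2$ has no non-minimal critical points on $W$, so the Morse stratification of $W$ consists of the single stratum $W$ itself; by hypothesis (2) the downward flow is flow-closed, hence $W$ $K$-equivariantly deformation retracts onto the minimum set $\mu_1^{-1}(0)\cap W=\mu^{-1}(0)$. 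Therefore $r_2\colon H^*_K(W)\to H^*_K(\mu^{-1}(0))$ is an isomorphism, and $r_2\circ r_1$ is surjective, which is the assertion of the theorem.

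\emph{The main obstacle.} I expect the crux to lie in Stage 1: one must be sure that, although $W$ is singular in general, the minimal Morse stratum $S_0$ of $\|\mu_\cc\|^2$ really is open and really does deformation retract $K$-equivariantly onto $W$, and that the Thom--Gysin machinery implicit in the notion of $K$-equivariant perfection genuinely delivers surjectivity onto $H^*_K(S_0)$ -- all of which rests on the (corrected) proof of Theorem \ref{th1.2-new} and on controlling the gradient flow of $-\|\mu_\cc\|^2$ near the singular minimum set. A secondary technical point is to make the Morse theory of $\|\mu_1\|^2$ on $W$ rigorous when $W$ fails to be smooth; here the genericity of the frame, which forces the stratification of $W$ to be as degenerate as possible (a single stratum retracting onto $\mu^{-1}(0)$), is exactly what makes the argument go through.
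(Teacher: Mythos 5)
Your proposal is correct and follows essentially the same route as the paper: the paper's proof (Corollary \ref{cor5.6}) factors the map through $H^*_K(W)$, obtains surjectivity of $H^*_K(M)\to H^*_K(W)$ from the equivariant perfection of $\|\mu_\cc\|^2$ under flow-closedness (Theorem \ref{thm-equivperfect}), and identifies $H^*_K(W)\cong H^*_K(\mu^{-1}(0))$ via Proposition \ref{PropNoUnstable}, which for a general frame rules out non-minimal critical points of $\|\mu_1\|^2$ on $W$ so that $W=W^{ss}$ retracts equivariantly onto $\mu^{-1}(0)$. Your two stages are exactly these two steps, so no further comparison is needed.
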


When $M$ is a quaternionic vector space $V=\hh^n$ and the action is
linear, the flow-closedness of $-||\mu_1||^2$ follows from work of
Sjamaar \cite{Sja-Conv}. So we obtain
\begin{proposition} \label{prop:linear} Let $K$ act linearly on $V=\hh^n$, and let $\mu_\cc$ be
the holomorphic moment map corresponding to a general
choice of \hk frame $\{\ii,\jj,\kk\}$.
If $-\|\mu_\cc\|^2$ on $V$ is flow-closed, then 
the restriction map
$$H^*_K(V)=H^*_K\longrightarrow H^*_K(\mu^{-1}(0)) 
$$ is surjective. Here $H^*_K = H^*(BK)$ denotes the equivariant cohomology of a point.
\end{proposition}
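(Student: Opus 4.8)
The plan is to read this off from the surjectivity criterion proved above --- the theorem asserting that, for a general \hk frame, flow-closedness of $-\|\mu_\cc\|^2$ on $M$ together with flow-closedness of $-\|\mu_1\|^2$ on $W$ makes $\kappa$ surjective. Two points then remain. First, since the $K$-action on $V=\hh^n$ is linear, scalar multiplication $(t,v)\mapsto tv$ is a $K$-equivariant contraction of $V$ onto the origin, so $H^*_K(V)\cong H^*_K(\{0\})=H^*(BK)=:H^*_K$, which identifies the domain of the restriction map as in the statement. Second, hypothesis (1) of the criterion, flow-closedness of $-\|\mu_\cc\|^2$ on $V$, is exactly what we are assuming, so it only remains to check hypothesis (2).

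For that I would fix the complex structure $\ii$ and identify $(V,\ii)$ with $\cc^{2n}$. Since $\mu_\cc$ is holomorphic and $K$-equivariant it is $K^\cc$-equivariant, so $W=\mu_\cc^{-1}(0)$ is a closed $K^\cc$-invariant affine subvariety. The gradient flow of $-\|\mu_1\|^2$ on $V$, with respect to the flat K\"ahler metric determined by the chosen frame, moves each point within its $K^\cc$-orbit (see \cite{Kir}), so any trajectory meeting $W$ is confined to the $K^\cc$-orbit it meets and hence stays in $W$. By Sjamaar's analysis of the norm-square of the moment map for linear actions on a complex vector space \cite{Sja-Conv}, this flow converges; in particular every trajectory, together with its limit, lies in a compact set, and restricting to trajectories through points of $W$ (or, equivalently, applying \cite{Sja-Conv} directly to the affine $K^\cc$-variety $W$ with the restricted flat K\"ahler form) shows that $-\|\mu_1\|^2$ is flow-closed on $W$. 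The surjectivity criterion now applies and yields surjectivity of $H^*_K(V)\to H^*_K(\mu^{-1}(0))$.

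The only step I expect to need genuine care is the appeal to \cite{Sja-Conv}: one should check that Sjamaar's convergence statement is invoked in a form valid for the flat K\"ahler structure coming from a \emph{general} rotated frame $\{\ii,\jj,\kk\}$ --- which it is, since rotating the frame changes the compatible complex structure but not the linearity of the action --- and, if one prefers not to use the orbit-preservation remark, that it applies to the (possibly singular) affine variety $W$ rather than to $V$ itself. Everything else is formal: the contractibility computation, the $K^\cc$-invariance of $W$, and the quotation of the criterion.
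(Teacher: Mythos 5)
Your proof is correct and follows essentially the same route as the paper: the paper likewise deduces hypothesis (2) of the surjectivity criterion from Sjamaar's result \cite[(4.6)]{Sja-Conv} that $-\|\mu_1\|^2$ is flow-closed on $V$ ``and hence on $W$'', and then quotes Theorem \ref{ThSurjCrit}. Your extra remarks (the $K^\cc$-invariance of $W$ plus the fact that the gradient flow stays in $K^\cc$-orbits, and the equivariant contractibility of $V$) simply make explicit what the paper leaves implicit.
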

Therefore, in the situation of Proposition \ref{prop:linear}
the surjectivity of $\kappa$ 
 follows from the
following.
\begin{conjecture} \label{conj:linear}
For a Hamiltonian linear \hk action
of a compact Lie group $K$
on a quaternionic vector space, $-\|\mu_\cc\|^2$ is flow-closed.
\end{conjecture}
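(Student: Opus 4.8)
The plan is to prove the much stronger statement that, along the gradient flow of $-\|\mu_\cc\|^2$ on $V=\hh^n$, the Euclidean norm of the point is non-increasing; flow-closedness is then immediate, since the forward trajectory through a point $x$ is trapped in the closed ball of radius $\|x\|$, and therefore, $V$ being complete and the gradient vector field polynomial, the flow exists for all $t\ge 0$ and has image contained in a compact set. The one feature of a \emph{linear} action used is that the holomorphic moment map is homogeneous of degree $2$: writing $\rho$ for the infinitesimal action of $\mathfrak{k}$ on $V$ and normalising so that $\mu_\cc(0)=0$, we have $\langle\mu_i(x),\eta\rangle=\tfrac12\,\omega_i(\rho(\eta)x,x)$ for $i=2,3$ and all $\eta\in\mathfrak{k}$, so that, identifying $\mathfrak{k}$ with $\mathfrak{k}^*$, $\|\mu_i(x)\|^2=\tfrac12\,\omega_i(\rho(\mu_i(x))x,x)$.

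Next I would compute $\grad\|\mu_\cc\|^2=\grad\|\mu_2\|^2+\grad\|\mu_3\|^2$. The moment map condition, which for a linear action reads $d_x\langle\mu_i,\eta\rangle(v)=\omega_i(\rho(\eta)x,v)$, together with $\omega_i(u,v)=g(I_iu,v)$ where $I_2=\jj$ and $I_3=\kk$ act as right quaternionic multiplication (hence commute with the $K$-action and are $g$-skew), gives $\grad\|\mu_i\|^2(x)=2\,I_i\,\rho(\mu_i(x))x$. Thus the gradient trajectory of $-\|\mu_\cc\|^2$ solves $\dot x=-2\,\jj\,\rho(\mu_2(x))x-2\,\kk\,\rho(\mu_3(x))x$, and pairing with $x$, using $g$-skewness of $\jj$ and $\kk$ and then the homogeneity identity above, yields
\[
\tfrac{d}{dt}\,\tfrac12\|x(t)\|^2=g(x,\dot x)=-2\,\omega_2(\rho(\mu_2(x))x,x)-2\,\omega_3(\rho(\mu_3(x))x,x)=-4\,\|\mu_\cc(x)\|^2\le 0 .
\]
Hence $\|x(t)\|$ is non-increasing, $-\|\mu_\cc\|^2$ is flow-closed, and by Proposition \ref{prop:linear} (for a general frame) the Kirwan map $\kappa$ is surjective for linear hyperk\"ahler actions.

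The step needing the most care is, perhaps surprisingly, the verification that the argument really uses nothing beyond homogeneity of degree exactly $2$: this is precisely where linearity is essential, since for a general hyperk\"ahler action the moment map is not homogeneous and $\|x(t)\|^2$ need not be monotone, which is consistent with the conjecture being stated only for linear actions. Two further remarks. First, in contrast to the K\"ahler situation $\grad\|\mu_\cc\|^2$ is \emph{not} tangent to the $K^\cc$-orbits (it mixes $\jj$ and $\kk$), so one cannot simply quote the convergence results available for $-\|\mu_1\|^2$; what rescues the computation is that the pairing with $x$ splits as a sum of two independent contributions --- one for $\jj$ and $\mu_2$, one for $\kk$ and $\mu_3$ --- with no cross term. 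Second, if the applications require not merely boundedness of each forward trajectory but convergence to a single critical point of $\|\mu_\cc\|^2$, this follows from the boundedness just established together with the {\L}ojasiewicz gradient inequality, since $\|\mu_\cc\|^2$ is a real polynomial.
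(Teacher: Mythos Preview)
Your argument is clean and the computation is correct, but it proves strictly less than the conjecture asks, and the gap is exactly the point where the paper's own work begins.

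The ``normalisation'' $\mu_\cc(0)=0$ is not innocuous. The conjecture concerns a fixed moment map $\mu_\cc$, the one for which $\mu^{-1}(0)$ is the hyperk\"ahler level we want to study; shifting it by a central constant changes both the function $\|\mu_\cc\|^2$ and the level set. In the applications the constant is typically nonzero (otherwise $0\in\mu^{-1}(0)$ is fixed by $K$ and the quotient is singular); the paper's own treatment of the circle case takes $\mu_\cc(x,y)=x\cdot y-c$ with arbitrary $c\in\cc$, and its final example computing $P_t(T^*\pp^{n-1})$ uses $c\neq 0$ essentially. With a constant $c$ your identity becomes
\[
\tfrac{d}{dt}\,\tfrac12\|x\|^2
=-4\|\mu_\cc(x)\|^2+4\,\mathrm{Re}\,\langle c,\mu_\cc(x)\rangle ,
\]
which can be positive when $\|\mu_\cc(x)\|<|c|$; the claim that $\|x(t)\|$ is non-increasing is simply false in that regime. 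So the monotone-norm argument collapses precisely when the moment map is shifted.

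For comparison: the paper does \emph{not} prove the conjecture in general --- it states explicitly that it does not know how --- and establishes only the case $K=U(1)$. There the method is close in spirit to yours but handles the constant by showing that the modified quantity $\|x\|^2+\sqrt{\|\mu_\cc\|^2}$ is decreasing once $\|x\|^2\ge 4|c|$, and then a short case analysis bounds the trajectory. Your homogeneous computation recovers the easy case $c=0$ (for all $K$, which is more than the paper states for $c=0$), but the substantive content of the conjecture, and all the difficulty the authors allude to, lies in absorbing the constant; your proposal does not address this.
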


Although it is purely a calculus problem in this linear case, the
question of flow-closedness seems to be difficult. But we prove the
following.
\begin{proposition} \label{p:flowclosed}
Conjecture \ref{conj:linear} is true when $K$ is the circle group $U(1)$.
\end{proposition}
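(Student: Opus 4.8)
\smallskip
\noindent\textbf{Proof proposal.}\ The plan is to reduce the gradient flow of $-\|\mu_\cc\|^2$ to an explicit system on $V=\hh^n$ and then exploit the fact that, along that flow, $\mu_\cc$ has finite total variation. First I would put the action into normal form. Since the $U(1)$-action is linear and preserves the flat hyperk\"ahler structure, it lies in the compact symplectic group, so one can choose complex coordinates $(z_1,\dots,z_n,w_1,\dots,w_n)$ on $\hh^n\cong\cc^{2n}$ which are simultaneously unitary for the flat metric, Darboux for $\omega_\cc$, and weight coordinates for the action with weights $(a_1,\dots,a_n,-a_1,\dots,-a_n)$; then $\mu_\cc=\sum_j a_j z_jw_j$ up to an additive constant, which will play no role below. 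Coordinates with $a_j=0$ are fixed by the entire flow and may be discarded, so assume $a_j\neq0$ for all $j$. For $K=U(1)$ this holds for \emph{every} \hk frame: each complex structure in the $S^2$-family brings $\mu_\cc$ into this shape, so in particular the genericity of Definition \ref{defn4.3}, needed elsewhere, is irrelevant here.

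With respect to the flat metric the gradient flow of $-\|\mu_\cc\|^2=-|\mu_\cc|^2$ reads
\[
\dot z_j=-2\,\mu_\cc\,a_j\bar w_j,\qquad \dot w_j=-2\,\mu_\cc\,a_j\bar z_j ,
\]
and a short computation extracts two structural facts. Writing $\zeta_j=z_jw_j$ and $r_j=|z_j|^2+|w_j|^2\ge0$: (i) each $|z_j|^2-|w_j|^2$ is a constant of motion, say $c_j$, so that $r_j=\sqrt{c_j^2+4|\zeta_j|^2}$; and (ii) $\dot\zeta_j=-2\mu_\cc a_j r_j$ while $\dot\mu_\cc=-2\mu_\cc\sum_k a_k^2 r_k$. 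By (ii) the velocity $\dot\mu_\cc$ is at each instant a non-positive real multiple of $\mu_\cc$, so the curve $\mu_\cc(t)$ moves monotonically along a ray towards the origin; hence $|\mu_\cc(t)|$ is non-increasing and $\int_0^t|\dot\mu_\cc|\,ds=|\mu_\cc(0)|-|\mu_\cc(t)|\le|\mu_\cc(0)|$ on the maximal interval of existence.

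The crux is then a one-line comparison. Since $\sum_k a_k^2 r_k\ge a_j^2 r_j$ for each $j$,
\[
|\dot\zeta_j|=2|\mu_\cc|\,|a_j|\,r_j\le\frac1{|a_j|}\Bigl(2|\mu_\cc|\sum_k a_k^2 r_k\Bigr)=\frac{|\dot\mu_\cc|}{|a_j|},
\]
so that $|\zeta_j(t)-\zeta_j(0)|\le|\mu_\cc(0)|/|a_j|$ for all $t$ in the interval of existence. Hence $|\zeta_j|$, and with it $r_j=\sqrt{c_j^2+4|\zeta_j|^2}$ and therefore $|z_j|^2$ and $|w_j|^2$, are bounded along the forward trajectory by constants depending only on the starting point. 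This a priori bound rules out finite-time escape, so the trajectory exists for all $t\ge0$ and remains in a fixed compact ball; as the starting point was arbitrary, $-\|\mu_\cc\|^2$ is flow-closed, which is Conjecture \ref{conj:linear} for $U(1)$.

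I do not expect a serious obstacle. The scenario one must exclude is that $\|\mu_\cc\|$ stays bounded (it may in fact tend to $0$) while the trajectory runs off to infinity through cancellations among the monomials $a_jz_jw_j$; the comparison above rules this out, since the coordinate $z_jw_j$ moves no faster than $\mu_\cc$ itself and $\mu_\cc$ has finite total variation. All the conceptual content sits in the two identities of the second paragraph together with this estimate; once they are in hand the argument is short. The only step needing a little care is the simultaneous normalisation, in the first paragraph, of the flat metric, the complex symplectic form, and the action, which for a flat quaternionic vector space is routine.
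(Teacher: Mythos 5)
Your proof is correct, but it takes a genuinely different route from the paper's. The paper first normalizes the weights to $\pm 1$ by a linear change of variables (so $\mu_\cc(x,y)=x\cdot y-c$) and then runs a Lyapunov-type argument: with $f=|\mu_\cc|^2$ and $\rho=|x|^2+|y|^2$ it shows $\langle \grad(\rho+\sqrt{f}),\grad f\rangle \ge 8f+2\sqrt{f}\,(\rho-4|c|)$, so $\rho+\sqrt{f}$ is non-increasing along the downward flow whenever $\rho\ge 4|c|$, and a short two-case analysis then bounds $\rho+\sqrt{f}$ by $\max\{4|c|+\sqrt{f_0},\,\rho_0+\sqrt{f_0}\}$, which gives flow-closedness. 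You instead keep arbitrary weights $a_j$ and use three exact identities: conservation of each $|z_j|^2-|w_j|^2$, the radial motion of $\mu_\cc(t)$ toward the origin (so its total variation is at most $|\mu_\cc(0)|$), and the comparison $|\dot\zeta_j|\le |\dot\mu_\cc|/|a_j|$, which bounds each $z_jw_j$ and hence each $|z_j|^2+|w_j|^2$. Your version buys coordinate-wise explicit bounds, complete independence of the additive constant $c$ (which enters the paper's threshold $4|c|$), and it avoids the weight-normalizing change of variables — a point where the paper is a little cavalier, since rescaling coordinates to make all weights $\pm1$ alters the flat metric and hence the gradient trajectories, whereas your computation is carried out directly in the given metric. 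The paper's argument is in exchange shorter once the normalization is granted and needs no conserved quantities. The only step you gloss is that a nonconstant trajectory never reaches $\mu_\cc=0$ in finite time (such points are equilibria of the gradient field), which is what justifies treating $\mu_\cc(t)$ as moving along a ray with total variation $|\mu_\cc(0)|-|\mu_\cc(t)|$; this is standard and harmless.
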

\noindent For certain other  torus actions on quaternionic
vector spaces, flow-closedness follows immediately
from the $U(1)$ case (see Remark \ref{rk5.3}).
Note that when $K $ is a torus, surjectivity of $\kappa$ follows from the
work of Konno \cite{konno}, where the cohomology ring of the
hyperk\"ahler quotient is computed explicitly.

The layout of this paper is as follows. In \S2 we review general
results on hyperk\"ahler quotients. In \S3 we explain how to
construct a partial desingularisation of a hyperk\"ahler quotient
by a series of blow-ups. In \S4 we outline criteria for the
surjectivity of the map $\kappa$ from the equivariant cohomology
of a hyperk\"ahler manifold to the ordinary cohomology of the
hyperk\"ahler quotient. In  \S\ref{s:oldapp}  we reproduce the
argument of \cite[\S5]{k6} which shows that $-\|\mu_\cc\|^2$ is
equivariantly perfect if it is flow-closed. In \S6 we apply this
to hyperk\"ahler quotients of linear actions on quaternionic
vector spaces preserving the hyperk\"ahler structure.

\section{Hyperk\"ahler quotients}
In this section, we recall basic definitions and facts about
hyperk\"ahler quotients.
\begin{definition} A \emph{\hk manifold} is a Riemannian manifold
$(M,g)$ equipped with three complex structures $\ii$, $\jj$, $\kk$
such that \begin{enumerate} \item $\ii \jj=\kk=-\jj \ii$, \item
$\ii$, $\jj$, $\kk$ are orthogonal with respect to $g$, \item
$\ii$, $\jj$, $\kk$ are parallel with respect to the Levi-Civita
connection of $g$.\end{enumerate}
\end{definition}

\begin{remit}\begin{enumerate}
\item \emph{$T_pM\cong\hh^n$ and so $\dim_\rr M=4n$ for some
$n\in\zz$.} \item \emph{There exists an $S^2$-family of complex
structures on $M$. Indeed, for any $(a,b,c)\in \rr^3$ with
$a^2+b^2+c^2=1$, it is easy to see that $a\ii+b\jj+c\kk$ is a complex structure on $M$.
There exists a complex manifold $Z$ (the \lq twistor space' of $M$)
and a holomorphic map $\pi:Z\to
\pp^1\cong S^2$ such that the fiber over $(a,b,c)$ is the complex
manifold $(M,a\ii+b\jj+c\kk)$. 
}
 \item \emph{There exists an
$S^2$-family of (real) symplectic forms on $M$. Indeed, for each
complex structure $I$ in the $S^2$-family described above, $\omega_I(-,-)=g(-,I-)$
defines a symplectic form on $M$ which is K\"{a}hler with respect to $I$. We let $\omega_1,
\omega_2,\omega_3$ denote the symplectic forms defined by $\ii,
\jj,\kk$ respectively.}
\end{enumerate}
\end{remit}
\begin{remit} \emph{The complex valued two-form
$\omega_\cc=\omega_2+\sqrt{-1}\omega_3$ is holomorphic symplectic,
so a hyperk\"ahler manifold is always a {holomorphic symplectic
manifold}. A holomorphic symplectic manifold which is compact and
K\"ahler always admits a hyperk\"ahler metric, but this is not true
in general for non-compact $M$. }\end{remit}
\begin{example}
\emph{Examples of \hk manifolds include}
\begin{enumerate}
\item \emph{$\hh^n=T^*\cc^n$ and quotients of $\hh^n$ by discrete
group actions;} \item \emph{K3 surfaces, Hilbert schemes of points on a
K3 surface and generalized Kummer varieties.}
\end{enumerate}
\emph{When $X$ is a K\"ahler
manifold, the cotangent bundle of $X$ is holomorphic symplectic
but is not necessarily hyperk\"{a}hler.}
\end{example}

Let $K$ be a compact Lie group acting on a \hk manifold $M$.
Suppose that this action preserves the \hk structure $(g,\ii,
\jj,\kk)$.

\begin{definition} A \emph{moment map} for the
$K$-action on $M$ is a differentiable map
\[
\mu=(\mu_1,\mu_2,\mu_3):M\longrightarrow \mathfrak{k}^*\otimes_\rr
\rr^3
\]
satisfying \begin{enumerate} \item $\mu$ is $K$-equivariant, i.e.
$\mu(kx)=Ad_k^*\mu(x)$ for $k\in K$, $x\in X$; \item $\langle
\mathrm{d}\mu(v),\xi\rangle
=(\omega_1(\xi_m,v),\omega_2(\xi_m,v),\omega_3(\xi_m,v))$ for
$m\in M$, $v\in T_mM$ and $\xi = (\xi_1,\xi_2,\xi_3) \in \mathfrak{k}^*\otimes_\rr
\rr^3$, where $$\langle
\mathrm{d}\mu(v),\xi\rangle = (\langle
\mathrm{d}\mu_1(v),\xi_1\rangle,\langle
\mathrm{d}\mu_2(v),\xi_2\rangle,\langle
\mathrm{d}\mu_3(v),\xi_3\rangle).$$
\end{enumerate}

We say the action of $K$ is \emph{Hamiltonian} if a moment map
exists. We call $\mu_1$ the \emph{real moment map} and
$\mu_\cc:=\mu_2+\sqrt{-1}\mu_3$ the \emph{complex moment map}
or \emph{holomorphic moment map} with
respect to the frame $\{\ii,\jj,\kk\}$.
\end{definition}
\begin{remit}\emph{
(1) If $\mu$ is a moment map then any translation of $\mu$ by
central elements of $\mathfrak{k}^*$ is again a moment map.}

\noindent \emph{(2) It is straightforward to see that
$\mu_\cc:M\to \mathfrak{k}^*\otimes_\rr \cc$ is a holomorphic
function with respect to $\ii$. }
\end{remit}

\begin{theorem} \emph{(\cite{Hit1})} The smooth part of
$M/\!/\!/K:=\mu^{-1}(0)/K$ inherits a \hk structure from $M$.
\end{theorem}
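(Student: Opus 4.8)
The plan is to work on the smooth part of $\mu^{-1}(0)/K$, by which I mean the image of the locus $\mu^{-1}(0)^{\rm lf}$ of points at which $K$ acts with finite stabiliser; there the quotient is a manifold (or orbifold), once we check that $0$ is a regular value of $\mu$ along this locus. Since the whole construction is local and $K$-equivariant, after a pointwise linear-algebra analysis at a point $x\in\mu^{-1}(0)^{\rm lf}$ the induced tensors will be $K$-invariant and will descend to the quotient. First I would record the basic consequence of the moment-map condition: writing $I_1=\ii,I_2=\jj,I_3=\kk$ and using $\omega_i(u,v)=g(I_iu,v)$, the defining identity $\langle \mathrm{d}\mu_i(v),\xi\rangle = \omega_i(\xi_M,v)=g(I_i\xi_M,v)$ shows that $v\in\ker(\mathrm{d}\mu_i)_x$ if and only if $v$ is orthogonal to $I_i(\mathfrak{k}\cdot x)$, where $\mathfrak{k}\cdot x$ denotes the tangent space to the orbit. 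Hence
\[
T_x\mu^{-1}(0)=\big(I_1(\mathfrak{k}\cdot x)+I_2(\mathfrak{k}\cdot x)+I_3(\mathfrak{k}\cdot x)\big)^{\perp}.
\]

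Next I would exploit the $K$-invariance of $\mu$ at the zero level. Since $\mu_i(kx)=\mathrm{Ad}^*_k\mu_i(x)$ and $\mu_i(x)=0$, differentiating along the orbit gives $\mathrm{d}\mu_i(\eta_M)=\mathrm{ad}^*_\eta\mu_i(x)=0$ at $x$, so that $\omega_i(\xi_M,\eta_M)=0$ for all $\xi,\eta\in\mathfrak{k}$; that is, the orbit direction $\mathfrak{k}\cdot x$ is isotropic for all three symplectic forms. Combined with the previous identity and the skew-adjointness of the $I_i$, a short computation then shows that when the action is locally free the four subspaces $\mathfrak{k}\cdot x$, $I_1(\mathfrak{k}\cdot x)$, $I_2(\mathfrak{k}\cdot x)$, $I_3(\mathfrak{k}\cdot x)$ are in direct sum (for instance $\xi_M=I_1\eta_M$ forces $g(\xi_M,\xi_M)=-\omega_1(\xi_M,\eta_M)=0$, whence $\xi_M=0$). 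In particular $0$ is a regular value of $\mu$ on $\mu^{-1}(0)^{\rm lf}$, the span $\hh(\mathfrak{k}\cdot x):=\mathfrak{k}\cdot x\oplus I_1(\mathfrak{k}\cdot x)\oplus I_2(\mathfrak{k}\cdot x)\oplus I_3(\mathfrak{k}\cdot x)$ is a quaternionic subspace of $T_xM$, and since each $I_i$ is orthogonal its orthogonal complement $H_x:=\hh(\mathfrak{k}\cdot x)^{\perp}$ is again a quaternionic subspace. The projection identifies $H_x$ with the tangent space $T_{[x]}(\mu^{-1}(0)/K)$, which therefore carries an almost \hk structure $(\bar g,\bar I_1,\bar I_2,\bar I_3)$ obtained by restriction and descent of the ambient data.

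Finally I would upgrade this almost \hk structure to a genuine one. The three two-forms $\omega_i$ pull back to closed, horizontal (hence basic) forms on $\mu^{-1}(0)$ — horizontality is immediate from $\iota_{\xi_M}\omega_i=\mathrm{d}\mu_i^\xi$ and $\mu_i|_{\mu^{-1}(0)}\equiv 0$ — so by the usual reduction argument they descend to closed two-forms $\bar\omega_i$ with $\bar\omega_i=\bar g(\bar I_i\,\cdot\,,\,\cdot\,)$. It remains to show that each $\bar I_i$ is integrable and $\bar g$-parallel, and here I would invoke the complex structure directly. Fixing $\ii$, the complex moment map $\mu_\cc$ is holomorphic, $W=\mu_\cc^{-1}(0)$ is an $\ii$-complex submanifold, and $\mu^{-1}(0)/K$ is exactly the Kähler quotient $(\mu_1|_W)^{-1}(0)/K$ of the Kähler manifold $(W,\omega_1|_W,\ii)$; the standard Kähler-reduction theorem then gives that $(\bar g,\bar\ii,\bar\omega_1)$ is Kähler, so in particular $\bar\ii$ is integrable. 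Running the same argument with $\jj$ and with $\kk$ playing the distinguished role — using the $S^2$-family of complex structures, with the three moment maps permuted cyclically so that the holomorphic moment maps become $\mu_3+\sqrt{-1}\mu_1$ and $\mu_1+\sqrt{-1}\mu_2$ while the real level set $\mu^{-1}(0)$ is unchanged — shows that $\bar g$ is Kähler with respect to each of $\bar\ii,\bar\jj,\bar\kk$. Each $\bar I_i$ is then parallel for the Levi-Civita connection of $\bar g$, and the three are orthogonal and satisfy the quaternion relations, which is precisely the definition of a \hk structure. I expect the integrability step to be the main obstacle: unlike a single Kähler reduction, the level set $\mu^{-1}(0)$ is not cut out inside a fixed complex manifold in a way that makes any one $\bar I_i$ manifestly integrable, and the essential device is the $S^2$-family, which lets me realise the single real reduction simultaneously as a Kähler reduction in three different complex structures; the pointwise quaternionic computation above is what guarantees that the three resulting Kähler structures are mutually compatible and satisfy $\bar\ii\bar\jj=\bar\kk$.
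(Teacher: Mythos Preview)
The paper does not give a proof of this theorem: it is quoted as a known result with a reference to \cite{Hit1}, so there is no argument in the paper to compare against.

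Your proof is the standard Hitchin--Karlhede--Lindstr\"om--Ro\v{c}ek argument and is correct. The pointwise linear algebra (the four spaces $\mathfrak{k}\cdot x$, $I_1(\mathfrak{k}\cdot x)$, $I_2(\mathfrak{k}\cdot x)$, $I_3(\mathfrak{k}\cdot x)$ are mutually orthogonal because the orbit is isotropic for all three $\omega_i$ at $\mu^{-1}(0)$) and the identification of $H_x$ with the quotient tangent space are fine; indeed the paper uses exactly this orthogonality in \S3, just before Theorem~\ref{theorem3.2}. Your integrability step is also the right one: realising $\mu^{-1}(0)/K$ as the K\"ahler quotient of the $\ii$-complex submanifold $W=\mu_\cc^{-1}(0)$ (smooth along $\mu^{-1}(0)^{\rm lf}$ since $d\mu$ is surjective there) gives that $(\bar g,\bar\ii,\bar\omega_1)$ is K\"ahler, and cycling the roles of $\ii,\jj,\kk$ finishes. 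One small point worth making explicit in the ``short computation'': to see the four pieces are in direct sum it suffices to take the inner product of a relation $a+I_1b+I_2c+I_3d=0$ first with $a$ (killing the other terms by isotropy), then apply $I_1$ and repeat; your single example covers the generic case but not immediately a general linear relation.
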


\begin{example}
\begin{enumerate}
\item \emph{Suppose that $M=T^*\cc^n=\cc^{2n}$ and that the
action of $K=U(1)$ is given by
$\lambda(x,y)=(\lambda x,\lambda^{-1}y)$ for $x,y\in \cc^n$. Then
up to the addition of constants we have $\mu_1(x,y)=\frac{\sqrt{-1}}2(x^\dagger x-y^\dagger y)$ and
$\mu_\cc(x,y)=x^Ty$. Hence $W=\mu_\cc^{-1}(0)$ is an affine quadric
hypersurface in $\cc^{2n}$ and the variation of
$\mu^{-1}(c,0,0)/U(1)$ around $c=0$ is the} Mukai flop. \item
\emph{Suppose
$M=T^*\mathrm{Hom}(\cc^k,\cc^n)=\mathrm{Hom}(\cc^k,\cc^n)\times
\mathrm{Hom}(\cc^n,\cc^k)$ and $K=U(n)$ acts on $M$ in the
natural way. Then the action is Hamiltonian with $
\mu_1(x,y)=\frac{\sqrt{-1}}2 (xx^\dagger-y^\dagger y)$ and
$\mu_\cc(x,y)=xy$.} \item \emph{Suppose
$M=T^*\mathrm{End}(\cc^n)=\mathrm{End}(\cc^n)\times
\mathrm{End}(\cc^n)$ and $K=U(n)$ acting by conjugation. Then up
to the addition of central constants we have
$\mu_1(B_1,B_2)=\frac{\sqrt{-1}}2([B_1,B_1^\dagger]+[B_2,B_2^\dagger])$
and $\mu_\cc(B_1,B_2)=[B_1,B_2]$. } \item \emph{Suppose $K$ acts on
$M$ with moment map $\mu:M\to \mathfrak{k}^*\otimes \rr^3$. Let
$\rho:H\to K$ be a homomorphism and let $\mathfrak{h}$ be the Lie
algebra of $H$. A moment map for the induced action of $H$ on $M$ is
the composition $M\to \mathfrak{k}^*\otimes \rr^3\to
\mathfrak{h}^*\otimes \rr^3$ of $\mu$ and the dual
$\mathfrak{k}^*\to \mathfrak{h}^*$ of the tangent map of $\rho$. }
\item \emph{Let $M_1,M_2$ be two \hk manifolds acted on by $K$ in
Hamiltonian fashion. Then the sum of the moment maps for $M_1$ and
$M_2$ is a moment map for the diagonal action on the product
$M_1\times M_2$.} \item \emph{(ADHM spaces) Let
$M=T^*\mathrm{End}\cc^n\times T^*\mathrm{Hom}(\cc^k,\cc^n)$ and
$K=U(n)$. Then
$$\mu_1(B_1,B_2,x,y)=\frac{\sqrt{-1}}2([B_1,B_1^\dagger]+[B_2,B_2^\dagger]
+xx^\dagger-y^\dagger y)$$ and $$\mu_\cc(B_1,B_2,x,y)=[B_1,B_2]+xy.$$
The quiver variety construction is similar \cite{ADHM,quiver}.  }
\item
\emph{(coadjoint orbits) If $O$ is an orbit in
$\mathfrak{g}^*=\mathfrak{k}^*\otimes_\rr\cc$, then
$\omega_\alpha(\xi_\alpha,\eta_\alpha)=\langle
\alpha,[\xi,\eta]\rangle$ defines a holomorphic symplectic form on
$O$ and $\mu_\cc:O\hookrightarrow \mathfrak{g}^*$ is the complex
moment map (cf. \cite{Kron, Biqu}).}
\end{enumerate}
\end{example}

%
%
%
%

\section{Partial desingularization}
Our first result in this paper allows us to construct a partial
resolution of singularities of a hyperk\"ahler quotient through a
sequence of explicit blow-ups, which is closely related to the
partial desingularization construction in \cite{Kir2}.

Let $\mu:M\to \mathfrak{k}^*\otimes \rr^3$ be a moment map for a
Hamiltonian $K$-action on a hyperk\"ahler manifold $M$.
Let $\pi:\mu^{-1}(0)\to \mu^{-1}(0)/K=M/\!/\!/K$ be the quotient
map. If $x\in \mu^{-1}(0)$ has trivial stabilizer, then $\pi(x)$
is a smooth point of $M/\!/\!/K$ \cite{Hit1}. 
We fix a
preferred complex structure $\mathbf{i}$ so that
$$\mu=(\mu_1,\mu_\cc):M\to \mathfrak{k}^*\otimes (\rr\oplus \cc)$$
and view $M$ as a complex manifold with respect to $\mathbf{i}$,
equipped with a holomorphic symplectic two-form
$\omega_\cc=\omega_2+\sqrt{-1}\omega_3$.
The function
$$ \mu_\cc=\mu_2 + i \mu_3: M \to \mathfrak{k}^* \otimes \cc $$
is holomorphic with respect to the complex structure $\ii$ on $M$,
and thus $W = {\mu_2}^{-1}(0) \cap {\mu_3}^{-1}(0)$ is a complex
analytic subvariety of $M$. Moreover $\mu^{-1}(0)/K = W \cap
{\mu_1}^{-1}(0)/K$ is the K\"ahler quotient of $W$ by the action of
$K$.

We can apply the partial desingularization construction of
\cite{Kir2} to the K\"{a}hler quotient ${\mu_1}^{-1}(0)/K$ of $M$
by $K$ with respect to the K\"{a}hler structure corresponding to
$\ii$. This gives us a complex orbifold with a holomorphic
surjection to ${\mu_1}^{-1}(0)/K$ which restricts to an
isomorphism over the open subset of ${\mu_1}^{-1}(0)/K$ where the
derivative of $\mu_1$ is surjective. The proper transform
$\widetilde{W/\!/K}$ of   $W/\!/K = W \cap {\mu_1}^{-1}(0)/K =
\mu^{-1}(0)/K$ in ${\mu_1}^{-1}(0)/K$ is the result of applying
the partial desingularization construction of \cite{Kir2} to the
complex subvariety $W$ of $M$ instead of $M$ itself, and thus if
$W$ were nonsingular then it would follow immediately that
$\widetilde{W/\!/K}$ is a complex orbifold. However in general $W$
is singular (indeed, if $W$ is nonsingular then 0 is a regular
value of $\mu$ and so no partial desingularization for $M/\!/\!/K
= \mu^{-1}(0)/K$ is needed). Nonetheless, we will see in this
section that $\widetilde{W/\!/K}$ is always a complex orbifold:
the blow-ups in its construction resolve the singularities coming
from $W$ at the same time as reducing the singularities created by
the quotient construction to orbifold singularities.

The construction of $\widetilde{W/\!/K}$ given in \cite{Kir2} is as
follows. Let $H_0$ be the identity component of the stabilizer of a
point in $\mu^{-1}(0) = W \cap {\mu_1}^{-1}(0)$ whose dimension is
maximal possible among such, and let $Z_{H_0}$ be the fixed point
set of $H_0$ in $W$. First we blow $W^{ss}$ up along
$GZ_{H_0}^{ss}$, and give the resulting blow-up a K\"{a}hler
structure which is a small perturbation of the pull-back of the
K\"{a}hler structure on $M$ restricted to $W$. Here $W^{ss}$ denote
the open subset of semistable points $x$ in $W$, i.e. the
$K^\cc$-orbit closure of $x$ meets $\mu_1^{-1}(0)$. Then we repeat
this process until, after finitely many steps, the points in the
blow-up where the real moment map vanishes all have 0-dimensional
stabilizers, and the resulting quotient is $\widetilde{W/\!/K}$.
Equivalently, by \cite[Lemma 3.11]{Kir2}, we can construct
$\widetilde{W/\!/K}$ directly from the quotient ${W/\!/K}$ by a
sequence of blow-ups.

\begin{theorem}\label{thm3.1} Suppose there is at least one point
in $\mu^{-1}(0)$ whose stabilizer is finite. The above blow-up
procedure produces a surjective analytic map
$$\widetilde{W/\!/K}\to W/\!/K$$ such that
\begin{enumerate} \item it is an isomorphism on the open subset of
the orbits of $x\in \mu^{-1}(0)$ where $d\mu_1$ is
surjective;\item $\widetilde{W/\!/K}$ has at worst orbifold
singularities.\end{enumerate}
\end{theorem}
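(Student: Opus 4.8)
The plan is to follow the inductive blow-up construction of \cite{Kir2} step by step, checking at each stage that both phenomena -- the singularities of $W$ itself and the singularities introduced by the quotient -- are controlled simultaneously. The key point is that the fixed locus $Z_{H_0}$ of the identity component $H_0$ of a maximal-dimensional stabilizer, sitting inside the (possibly singular) variety $W$, is in fact \emph{smooth}: this is because $W = \mu_\cc^{-1}(0)$ and the Hamiltonian vector fields generated by $\mathfrak{k}^\cc$ together with the holomorphic symplectic form give good control on the local structure of $W$ near points with large stabilizer. Concretely, I would first establish a local normal form near a point $x \in GZ_{H_0}^{ss}$: by the symplectic (K\"ahler) slice theorem applied to the $K$-action on $M$, a neighbourhood of the $K$-orbit of $x$ in $M$ is modelled on an associated bundle $K \times_{H} (\mathfrak{k}/\mathfrak{h} \oplus N_x)$ where $N_x$ is the normal slice, which is an $H$-representation; the complex moment map $\mu_\cc$ then becomes, in this model, the restriction of the $H$-moment map on the slice together with components cutting out the $H$-fixed directions, and one checks that $W$ and $Z_{H_0}$ inherit descriptions as (the vanishing loci of) explicit $H$-equivariant holomorphic maps on the slice.

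With the local model in hand, the second step is to analyse the blow-up $W_1 \to W^{ss}$ of $W^{ss}$ along $GZ_{H_0}^{ss}$. Here I would argue exactly as in \cite[\S3, \S6]{Kir2}: blowing up along the smooth centre $GZ_{H_0}^{ss}$ has the effect, on the exceptional divisor, of replacing the $H_0$-fixed slice directions by their projectivization, which strictly decreases the dimension of the stabilizers occurring among semistable points of $W_1$ -- no semistable point of $W_1$ has stabilizer containing a conjugate of $H_0$. The crucial extra observation, beyond \cite{Kir2}, is that the blow-up also improves the singularities of $W$: in the local model, the defining equations of $W$ in the slice, after pulling back under the blow-up and removing the exceptional factor, become the defining equations of a \emph{smaller} stratified piece, and an induction on the stabilizer dimension (which is how the construction terminates, by the finiteness hypothesis that some point of $\mu^{-1}(0)$ has finite stabilizer) shows that after finitely many blow-ups the resulting $\widetilde W$ has only finite stabilizers at points where $\mu_1$ vanishes, hence $\widetilde W /\!/ K = \widetilde{W/\!/K}$ has at worst orbifold singularities. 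Part (1) is then immediate from the construction, since each blow-up centre $GZ_{H_0}^{ss}$ lies in the locus where $d\mu_1$ fails to be surjective (a point with positive-dimensional stabilizer cannot be a regular point of $\mu_1$), so the map $\widetilde{W/\!/K} \to W/\!/K$ is an isomorphism over the complement.

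The main obstacle I anticipate is verifying that the blow-up genuinely resolves the singularities of $W$ and not merely those of the quotient -- that is, proving that after removing the exceptional divisor the proper transform of $W$ near the exceptional locus is again cut out (in an appropriate slice) by equations of the same shape but with the problematic $H_0$-fixed directions stripped away, so that the induction closes. This requires a careful bookkeeping of how the holomorphic moment map equations transform under blow-up, using that $\mu_\cc$ is holomorphic and $K^\cc$-equivariant and that the $H_0$-weight decomposition of the slice separates the ``fixed'' directions (which get blown up) from the rest. One must also check that the K\"ahler structures produced by the small perturbations at each stage remain compatible so that the notion of semistability behaves well and \cite[Lemma 3.11]{Kir2} applies, allowing the passage between the ``blow up $W^{ss}$ then quotient'' and ``blow up $W/\!/K$ directly'' descriptions. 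Granting these, the proof is a finite induction driven by the strictly decreasing maximal stabilizer dimension, with base case the situation in which all semistable points already have finite stabilizer, where $W^{ss}/K$ is already an orbifold by \cite{Kir2}.
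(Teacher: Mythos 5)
Your outline reproduces the outer shell of the paper's argument (slice-theorem local model, induction on stabilizer dimension, part (1) immediate because the blow-up centres avoid the locus where $d\mu_1$ is surjective), but the step you yourself flag as the ``main obstacle'' is precisely the mathematical content of the theorem, and you do not supply it. Knowing that after finitely many blow-ups all semistable points have finite stabilizers does not by itself give orbifold singularities of $\widetilde{W/\!/K}$: the quotient of a \emph{singular} variety by finite stabilizers need not be an orbifold, so one must actually prove that the proper transform $\widetilde{W}$ becomes nonsingular along the proper transform of $\mu^{-1}(0)$. The paper does this with three ingredients that are absent from your sketch. First, a holomorphic equivariant Darboux theorem combined with Sjamaar's holomorphic slice theorem puts not just the $K$-action but the holomorphic symplectic form and $\mu_\cc$ themselves into normal form: near the orbit of $x\in\mu^{-1}(0)$ with stabilizer $H$, the pair $(M,W)$ is biholomorphic to $G\times_{H^\cc}(\jj V_x^\cc\oplus\mathcal{W})$ containing $G\times_{H^\cc}(0\times(\mu_\cc^{\mathcal{W}})^{-1}(0))$, where $\mu_\cc^{\mathcal{W}}$ is the complex moment map of the linear $H$-action on the slice $\mathcal{W}$ and is \emph{homogeneous quadratic}. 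Hence the singularity of $W$ along the blow-up centre is a product of $\mathcal{W}^{H_0}$ with a homogeneous quadratic affine cone; your ``one checks that $W$ inherits a description as the vanishing locus of an explicit $H$-equivariant holomorphic map'' stops well short of this, and it is exactly the quadratic structure that makes everything work.

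Second, the paper proves a lemma that the blow-up at the vertex of a cone cut out by homogeneous quadrics is the total space of the line bundle $\cO_E(2)$ over the exceptional divisor $E$, so $E$ is normally nonsingular in the blow-up; and third, it observes that the centres of the subsequent blow-ups in the partial desingularization process are \emph{conic} with respect to this line-bundle structure, so that the later blow-ups --- which by induction on $\dim H_0$ resolve the singularities of the cone away from its vertex --- lift to make the total space nonsingular, after which orbifoldness of the quotient follows from the results of Kirwan's partial desingularization paper. Note that this mechanism is not the one you propose: the first blow-up does not produce equations ``with the $H_0$-fixed directions stripped away'' (the centre \emph{is} the fixed locus, i.e.\ the vertex directions); what survives it are the singularities of the projectivized quadric $E$, and these are handled by the later blow-ups via the conic-centre argument, not by your induction as stated. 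Without these ingredients your induction does not close, and the inference ``finite stabilizers on $\widetilde{W}$, hence orbifold quotient'' is invalid.
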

\begin{definition}
We call $\widetilde{W/\!/K}$ the \emph{partial desingularization}
of the \hk quotient $M/\!/\!/K$ with respect to $\mathbf{i}$.
\end{definition}

Item (1) is obvious since the blow-ups do not touch the orbits of
$x\in\mu^{-1}_1(0)$ where $d\mu_1$ is surjective. In order to
prove that $\widetilde{W/\!/K}$ has only orbifold singularities
even though $W$ is in general singular, we shall prove that the
singularities of $W$ are all quadratic (Theorem \ref{theorem3.2}).
Then we can deduce from the following lemma that after blowing up
$W$ as described in the previous paragraphs, we obtain a variety
$\widetilde{W}$ which is nonsingular along the proper transform of
$\mu^{-1}(0)$. It then follows immediately from \cite{Kir2} that
the quotient $\widetilde{W/\!/K}$ of $\widetilde{W}$ has at worst
orbifold singularities.
\begin{lemma}
Let $V\subset \cc^n$ be a complex analytic variety defined only by
\emph{homogeneous} quadratic polynomials. Let $\pi:\tilde{V}\to V$
be the blow-up at $0$. Then $\tilde{V}$ is isomorphic to a line
bundle $L=\cO_E(2)$ over the exceptional divisor $E=\pi^{-1}(0)$. In
particular, $E$ is normally nonsingular in $\tilde{V}$.
\end{lemma}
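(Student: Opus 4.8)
The plan is to analyze the blow-up of the affine cone $V$ at the origin directly in terms of the homogeneous defining equations. Let $V = V(q_1,\dots,q_r) \subset \cc^n$ where each $q_i$ is homogeneous of degree $2$. Since the $q_i$ are homogeneous, $V$ is invariant under the scaling action of $\cc^*$ on $\cc^n$, so $V$ is the affine cone over the projective variety $E = \mathbb{P}(V) = V(q_1,\dots,q_r) \subset \pp^{n-1}$. I would first recall the standard description of the blow-up of $\cc^n$ at $0$ as the total space of the tautological line bundle $\cO(-1)$ over $\pp^{n-1}$, with the blow-down map contracting the zero section and being an isomorphism away from it. The proper transform $\tilde V$ of $V$ inside $\widetilde{\cc^n}$ is then exactly the subvariety lying over $E \subset \pp^{n-1}$, and I expect it to be the restriction $\cO(-1)|_E$ of the tautological bundle to $E$, with exceptional divisor $\pi^{-1}(0) = E$ sitting as the zero section. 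That would already give the line bundle structure; the remaining point is to identify $\cO(-1)|_E$ with $\cO_E(2)$.

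The key computation is the identification of the normal bundle / the line bundle. Working in a coordinate chart $U_j = \{x_j \ne 0\}$ of $\pp^{n-1}$ with coordinates $u_k = x_k/x_j$ and fiber coordinate $t = x_j$ (so $x_k = t u_k$), the equation $q_i(x) = 0$ becomes $t^2 q_i(u) = 0$ by homogeneity of degree $2$. Hence the total transform is $\{t^2 q_i(u) = 0\}$, and dividing out the exceptional divisor $\{t=0\}$ with multiplicity $2$ — this is where the degree enters — leaves the proper transform $\tilde V \cap (\text{chart}) = \{q_i(u) = 0\} = E \cap U_j$, with $t$ a free fiber coordinate. So locally $\tilde V$ is $E \times \cc$, confirming it is a line bundle over $E$. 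To pin down the line bundle: $t = x_j$ transforms between charts $U_j$ and $U_{j'}$ by $x_j = (x_j/x_{j'}) x_{j'}$, i.e. by the transition functions of $\cO(-1)$, so as a bundle over $E$ the proper transform is $\cO(-1)|_E$. But in fact one should track the fiber coordinate more carefully: since the exceptional divisor was removed with multiplicity exactly $2$, the natural fiber coordinate on $\tilde V$ is related to the ambient one by a factor accounting for that multiplicity, and the bundle that appears is $\cO_E(-1)^{\otimes(1)}$ twisted appropriately; comparing with the conormal sheaf of $E$ as cut out by degree-$2$ equations gives $\cO_E(2)$. I would make this precise by computing the conormal bundle: the ideal sheaf of $E$ in $\pp^{n-1}$ is generated locally by the $q_i$, but the conormal bundle of the cone point's blow-up picks up the line bundle $\cO_E(2)$ because each relation has degree $2$ — concretely, the exceptional divisor $E$ in $\tilde V$ has normal bundle $\cO_{\tilde V}(E)|_E$, and restricting the relation $\cO_{\widetilde{\cc^n}}(\text{exc})|$ and using that $V$ was cut out by quadrics one gets $N_{E/\tilde V} = \cO_E(1)^{\otimes 2} = \cO_E(2)$.

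The main obstacle I anticipate is the bookkeeping in the last step: showing cleanly that the multiplicity-$2$ vanishing of the defining equations along the exceptional divisor forces the line bundle to be $\cO_E(2)$ rather than $\cO_E(-1)$ or $\cO_E(1)$, and doing so in a chart-independent way. The cleanest route is probably: (i) establish that $\tilde V \to E$ is a line bundle as above; (ii) identify it with $N_{E/\tilde V}$ via the fact that $\tilde V$ retracts onto its zero section $E$ (a line bundle is isomorphic, as a variety over its base, to the total space of itself, and here the zero section is the exceptional divisor); (iii) compute $N_{E/\tilde V}$ by restricting $\cO_{\tilde V}(E)$ to $E$, using that $\tilde V$ sits in $\widetilde{\cc^n} = \cO_{\pp^{n-1}}(-1)$ with $\cO(E_{\widetilde{\cc^n}})|_{\pp^{n-1}} = \cO(-1)$, but that $V$ is defined by quadrics so the proper transform equals the total transform minus $2E$, shifting the computation by $\cO_E(2)$ net. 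Once the line bundle is identified as $\cO_E(2)$, normal nonsingularity of $E$ in $\tilde V$ is immediate, since $E$ is the zero section of a vector bundle over it, hence cut out locally by a single coordinate with everywhere-nonvanishing differential. The non-reducedness or singularity of $E$ itself is harmless here: $\tilde V$ may well be singular, but it is singular only "in the $E$-direction", i.e. along $E$ it looks like (singular base) $\times \cc$, which is exactly the content of normal nonsingularity.
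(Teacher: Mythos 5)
The core of your argument is the same as the paper's. The paper also works in the standard charts of the blow-up of $\cc^n$ at $0$: writing $x=(y_1,y_1y_2,\dots,y_1y_n)$, each quadric becomes $y_1^2\tilde f_i(y_2,\dots,y_n)$, so the strict transform is cut out by equations independent of the fiber coordinate $y_1$, giving $\tilde V\cap U_j\cong \cc\times(E\cap U_j)$; these trivializations glue to a line bundle over $E$ with the exceptional divisor as zero section, and normal nonsingularity of $E$ in $\tilde V$ is then immediate. Your step (i) and your closing paragraph reproduce exactly this, and the paper, like you, leaves the gluing as an ``easy exercise''.

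Where your write-up goes astray is in second-guessing the transition functions. Your first computation was already the correct one: with fiber coordinate $t=x_j$ over the chart $U_j$, the transitions are $x_j/x_{j'}$, i.e.\ $\tilde V$ is the restriction to $E$ of the tautological bundle $\cO_{\pp^{n-1}}(-1)$. This had to be so: the strict transform of \emph{any} affine cone in $Bl_0\cc^n$ (the total space of $\cO_{\pp^{n-1}}(-1)$) is the union of the lines of the cone, i.e.\ $\cO(-1)|_E$, irrespective of the degrees of the defining equations. The subsequent ``multiplicity $2$'' adjustment is not an argument: subtracting $2E$ from the total transform changes which equations cut out $\tilde V$, not the fibration $\tilde V\to E$, and the bundle it produces, $\cO_E(2)$, is (in the hypersurface case) the normal bundle of $E$ inside the exceptional $\pp^{n-1}$, not $N_{E/\tilde V}$, which is $\cO_E(-1)$. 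A concrete check: for the $A_1$-cone $x_1^2+x_2^2+x_3^2=0$ in $\cc^3$, $E$ is a conic in $\pp^2$ (so $E\cong\pp^1$ with $\cO_E(1)\cong\cO_{\pp^1}(2)$), and the blow-up at the vertex is the minimal resolution, the total space of $\cO_{\pp^1}(-2)=\cO_E(-1)$; it cannot be $\cO_E(2)\cong\cO_{\pp^1}(4)$, whose zero section has positive self-intersection and so cannot be contracted by $\pi$. Thus the identification $L=\cO_E(2)$ asserted in the statement (and left unproved in the paper) appears to be a slip, and your attempt to justify it does not work; what is true, and what your chart computation does establish, is that $\tilde V$ is the total space of $\cO(-1)|_E$. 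Since everything used later (the conic condition in Remark \ref{remindres} and the proof of Theorem \ref{thm3.1}) needs only the line-bundle structure with fibers the lines of the cone, the substantive content of your proof is fine.
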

\begin{proof}
Let $A_1,\cdots, A_r$ be symmetric matrices such that the
quadratic forms $f_i(x)=x^tA_ix$, for $i=1,\cdots,r$, define $V$.
Let $p:\widetilde{\cc^n}\to \cc^n$ be the blow-up at $0$ and
consider the local chart
\[
(y_1,\cdots, y_n)\mapsto (y_1,y_1y_2,\cdots,
y_1y_r)=(x_1,x_2,\cdots,x_n).
\]
Then the blow-up $\tilde{V}$ is the subvariety of
$\widetilde{\cc^n}$ defined by
\[
\tilde{f}_i(y)=\frac{1}{y_1^2}f_i(x)=\left(\begin{matrix} 1 &y_2&
\cdots & y_n\end{matrix}\right) A_i\left(\begin{matrix} 1 \\ y_2\\
\cdots \\ y_n\end{matrix}\right).
\]
Since all the defining equations for $\tilde V$ are independent of
$y_1$, $\tilde V\cap U_1\cong \cc\times (E\cap U_1)$ where
$U_1=\cc^n=\{(y_1,\cdots, y_n)\}\subset \widetilde{\cc^n}$. Here $E$
is the projective variety in $\pp^{n-1}=\pp \cc^n$ defined by the
homogeneous polynomials $f_1,\cdots, f_r$. It is an easy exercise to
check that these local trivializations for $\tilde V$ glue to give
us the line bundle $L=\cO_E(2)$, which is the restriction of
$\cO_{\pp^{n-1}}(2)$ to $E$.
\end{proof}
\begin{remit}\label{remindres} \emph{ Suppose $$V_k^0\longrightarrow \cdots\longrightarrow
V_1^0\longrightarrow V^0_0=V^0
$$ is a sequence of blow-ups, which resolves the singularities of
$V^0=V-\{0\}$, with blow-up centers $C_i^0$ satisfying
\begin{enumerate}
\item $C_i^0$ is nonsingular; \item $C_i^0\subset V_{i-1}^0$ is the
proper transform of its image $B_i^0$ in $V^0$; \item $B_1^0\subset
B_2^0\subset\cdots \subset B_k^0$; \item each $B_i^0$ is conic in
the sense that the closure $B_i$ of $B_i^0$ in $\tilde{V}$ is the
restriction of the line bundle $L$ to $B_i\cap E$.\end{enumerate}
Let
$$\tilde{V}_k\longrightarrow \cdots\longrightarrow
\tilde{V}_1\longrightarrow \tilde{V}
$$ be the blow-ups along the proper transforms $C_i\subset \tilde{V}_{i-1}$ of the
closures $B_i\subset \tilde{V}$ of $B_i^0$. Since the blow-up
centers are all normally nonsingular along $E$ by (4) above, the
proper transform of $E$ is normally nonsingular in $\tilde{V}_k$
and hence $\tilde{V}_k$ is nonsingular. }\end{remit}

To show that the singularities of $W$ are all quadratic, we begin
with a holomorphic version of the equivariant Darboux theorem.
\begin{theorem}\label{equiDar}
Suppose $\omega_0, \omega_1$ are holomorphic symplectic two-forms
on a complex manifold $X$ and $Y$ is a complex submanifold of $X$
such that $\omega_0$ and $\omega_1$ coincide on $TX|_Y$. Suppose
further that there exists an open neighborhood $U$ of $Y$ and a
differentiable family of holomorphic maps $\varphi_t:U\to U$ for
$0\le t\le 1$ such that $$\varphi_1=\mathrm{id}_U,\quad
\varphi_0(U)=Y, \quad \varphi_t|_Y=\mathrm{id}_Y \ \ \forall t.$$
Then there exists an open neighborhood $U'$ contained in $U$ and a
biholomorphic map $f:U'\to f(U')$ where $f(U')$ is an open subset of $X$ such that
\[
f|_Y=\mathrm{id}_Y\quad \text{and} \quad f^*\omega_1=\omega_0.
\]
If a compact Lie group $K$ acts on $X$ with $\omega_0$,
$\omega_1$, $Y$, $U$ and $\varphi_t$ all $K$-equivariant, then $f$
can be chosen to commute with the action of $K$.
\end{theorem}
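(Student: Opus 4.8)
The plan is to run Moser's deformation argument --- the holomorphic analogue of Weinstein's relative Darboux theorem --- using the given family $\varphi_t$ as the homotopy that produces a primitive vanishing along $Y$. Set $\beta=\omega_1-\omega_0$ and $\omega_t=(1-t)\omega_0+t\omega_1$ for $t\in[0,1]$. Since holomorphic symplectic forms are $d$-closed, $\beta$ is a closed holomorphic $2$-form; and since $\omega_0$ and $\omega_1$ agree on $TX|_Y$ we have $\beta_y=0$ for every $y\in Y$, so each $\omega_t$ restricts to $\omega_0$ on $TX|_Y$. Nondegeneracy being an open condition and $[0,1]$ compact, I may shrink $U$ so that every $\omega_t$, $t\in[0,1]$, is nondegenerate on $U$.

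First I would build a holomorphic $1$-form $\sigma$ on $U$ with $d\sigma=\beta$ that vanishes on $TX|_Y$, using $\varphi_t$ as a homotopy operator. Writing $\Phi(x,t)=\varphi_t(x)$ and $\dot\varphi_t(x)=\partial_t\Phi(x,t)\in T_{\varphi_t(x)}X$, set
\[
\sigma=\int_0^1\varphi_t^*\bigl(\iota_{\dot\varphi_t}\beta\bigr)\,dt,
\qquad\text{i.e.}\qquad
\sigma_x(w)=\int_0^1\beta_{\varphi_t(x)}\bigl(\dot\varphi_t(x),(d\varphi_t)_x(w)\bigr)\,dt .
\]
Because $\beta$ and each $\varphi_t$ are holomorphic and the family is differentiable in $t$, the integrand is a holomorphic $1$-form depending smoothly on $t$, so $\sigma$ is holomorphic. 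The standard homotopy formula, together with $d\beta=0$, $\varphi_1=\mathrm{id}_U$, and $\varphi_0(U)=Y$ (whence $\varphi_0^*\beta=\varphi_0^*(\beta|_Y)=0$, since $\beta$ vanishes at points of $Y$), gives $d\sigma=\varphi_1^*\beta-\varphi_0^*\beta=\beta$. Finally $\varphi_t|_Y=\mathrm{id}_Y$ forces $\dot\varphi_t(y)=0$ for $y\in Y$, so the integrand vanishes at every point of $Y$ and hence $\sigma$ vanishes on $TX|_Y$.

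Then I would complete the Moser step. Define the holomorphic time-dependent vector field $X_t$ on $U$ by $\iota_{X_t}\omega_t=-\sigma$; it is well defined and holomorphic since $\omega_t$ is nondegenerate with holomorphic coefficients, and $X_t|_Y=0$ because $\sigma$ vanishes on $TX|_Y$. As $X_t$ vanishes on $Y$, a standard ODE argument (immediate when $Y$ is compact, and in general obtained by working on relatively compact pieces) shows that the flow $f_t$ of $X_t$ is defined for all $t\in[0,1]$ on an open neighbourhood $U'\subseteq U$ of $Y$, is biholomorphic onto its image, and satisfies $f_0=\mathrm{id}_{U'}$ and $f_t|_Y=\mathrm{id}_Y$. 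Using Cartan's formula, $d\omega_t=0$, and $\dot\omega_t=\beta=d\sigma$,
\[
\frac{d}{dt}\bigl(f_t^*\omega_t\bigr)=f_t^*\bigl(\mathcal{L}_{X_t}\omega_t+\dot\omega_t\bigr)=f_t^*\bigl(d\,\iota_{X_t}\omega_t+\beta\bigr)=f_t^*(-d\sigma+\beta)=0,
\]
so $f:=f_1$ satisfies $f^*\omega_1=f_1^*\omega_1=f_0^*\omega_0=\omega_0$ and $f|_Y=\mathrm{id}_Y$. For the equivariant statement no averaging is needed: if $\omega_0,\omega_1,Y,U,\varphi_t$ are all $K$-equivariant, then $\beta$, $\omega_t$, the primitive $\sigma$ (built canonically from $\beta$ and $\varphi_t$), and hence $X_t$ and $f_t$ are $K$-equivariant as well.

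I expect the only real obstacles to be bookkeeping ones: checking that holomorphicity survives the fibre integral defining $\sigma$ (the Cauchy--Riemann operator commutes with $\int_0^1\cdot\,dt$), and controlling the domain of the flow so that $f_t$ is defined up to $t=1$ on a single neighbourhood of $Y$ rather than shrinking to nothing as $t$ grows --- both handled by the usual tube-lemma and relative-compactness arguments once one knows $X_t|_Y=0$.
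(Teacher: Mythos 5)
Your proposal is correct and follows essentially the same route as the paper: the Moser--Weinstein deformation argument, using the family $\varphi_t$ as a homotopy operator to produce a holomorphic primitive of $\omega_1-\omega_0$ vanishing on $TX|_Y$, then integrating the time-dependent holomorphic vector field $\iota_{X_t}\omega_t=-\sigma$ near $Y$. Your write-up is in fact slightly more explicit than the paper's on two minor points it leaves implicit, namely that $\varphi_0^*(\omega_1-\omega_0)=0$ because $\varphi_0(U)=Y$ and the form vanishes at points of $Y$, and that $U$ must be shrunk so that each $\omega_t$ stays nondegenerate.
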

\begin{proof} The proof is an obvious modification of Weinstein's
in \cite{GuillStern}. Let $\omega_t=\omega_0+t\sigma$ where
$\sigma=\omega_1-\omega_0$. By assumption, $\sigma$ is trivial on
$Y$ and $\mathrm{d}\sigma=0$. Let $\xi_t$ denote the vector field
of $\varphi_t$. Then
\[
\sigma - \varphi_0^*\sigma
=\int_0^1\frac{\mathrm{d}}{\mathrm{d}t}(\varphi_t^*\sigma)\mathrm{d}t
=\int_0^1\varphi_t^*(\imath(\xi_t)\mathrm{d}\sigma)\mathrm{d}t
+\mathrm{d}\int_0^1\varphi_t^*(\imath(\xi_t)\sigma)\mathrm{d}t=\mathrm{d}\beta
\]
where $\beta=\int_0^1\varphi_t^*(\imath(\xi_t)\sigma)\mathrm{d}t$.
By our assumption, $\beta|_Y=0$. We define a \emph{holomorphic}
vector field $\eta_t$ on $U$ by the equation
$\imath(\eta_t)\omega_t=-\beta$. By shrinking $U$ to a smaller
open neighborhood $U'$ if necessary, we can integrate $\eta_t$ to
obtain \emph{biholomorphic} maps $f_t:U'\to X$ for $0\le t\le 1$.
Then \[
f_1^*\omega_1-\omega_0=\int_0^1\frac{\mathrm{d}}{\mathrm{d}t}(f_t^*\omega_t)
\mathrm{d}t=\int_0^1f_t^*(\sigma+\mathrm{d}\imath(\eta_t)\omega_t)\mathrm{d}t=0
\]
as desired. Moreover if  $K$ acts on $X$ with $\omega_0$,
$\omega_1$, $Y$, $U$ and $\varphi_t$ all $K$-equivariant, then
$f_1$ is also $K$-equivariant.
\end{proof}

Let $x\in \mu^{-1}(0)$, and note that if
$\alpha,\beta,\gamma,\delta \in \mathfrak{k}$ then $\alpha_x,\ii
\beta_x,\jj \gamma_x$ and $\kk \delta_x$ are mutually orthogonal,
since, for example,
$$g(\jj \gamma_x,\kk \delta_x) = g(\kk \ii \gamma_x,\kk \delta_x)
=  g( \ii \gamma_x, \delta_x) = d\mu_1(\delta_x)\cdot\gamma
=ad^*_\delta(\mu_1(x))\cdot\gamma = 0$$ because $\mu_1(x)=0$. We
next find a local model for a neighborhood of $x$ in $M$. Let $H$
be the stabilizer of $x$ in $K$, so that the complexification
$H^\cc$ is the stabilizer of $x$ in the complexification
$G=K^\cc$, and let $Y=Gx\cong G/H^\cc$ be its $G$-orbit. Further,
let $B$ be a ball in $T_x(Gx)^\perp=\mathbf{j}(V_x^\cc)\oplus
\mathcal{W}$ where $V_x=T_x(Kx)$ and $$\mathcal{W}=(V_x \oplus \ii
V_x \oplus \jj V_x \oplus \kk V_x)^\perp.$$ Note that
$\mathcal{W}$ is a quaternionic vector space with a linear action
of $H^\cc$. Let $S=H^\cc B\subset T_xM$. Then the holomorphic
slice theorem in \cite{Sjamaar} tells us that
\[
G\times_{H^\cc}S
\]
is biholomorphic to an open neighborhood $U$ of $Y$ in $M$. Now
consider
\[
X=G\times_{H^\cc}(\mathbf{j}V_x^\cc\oplus
\mathcal{W})
\]
which is a hyperk\"ahler manifold since it is a hyperk\"ahler
quotient of $T^*G\times \mathcal{W}$ by the action of $H$. By this
quotient construction, we obtain a holomorphic symplectic form
$\omega_\cc'$ and a complex moment map
\begin{equation} \label{cmm}
\mu'_\cc(g,\mathbf{j}b,w)=Ad_g^*(b+\mu_\cc^{\mathcal{W}}(w))\qquad
g\in G, b\in V_x^\cc, w\in \mathcal{W}
\end{equation} for the action of $K$ on
$X$ where $\mu_\cc^{\mathcal{W}}$ is the complex moment map for the
action of $H$ on $\mathcal{W}$. If we take the hyperk\"ahler
quotient of $X$ by the action of $K$, then we obtain the
hyperk\"ahler quotient $\mathcal{W}/\!/\!/H$. Since $X\supset U$,
the pull-back of the holomorphic symplectic form $\omega_\cc$ of $M$
is also holomorphic symplectic, and this coincides with $\omega'_\cc$
on $Y$. On the other hand, $X$ is a vector bundle over $Y=G/H^\cc$
with fibers $ \mathbf{j}V_x^\cc\oplus
\mathcal{W} $  and
hence by shrinking the fibers, we obtain a family of holomorphic
maps $\varphi_t:X\to X$ satisfying the assumptions of Theorem
\ref{equiDar}. Thus by Theorem \ref{equiDar}, we obtain

\begin{theorem} \label{theorem3.2}
There exists a biholomorphic map from a neighborhood of $G/H^\cc$
in $X$ to a neighborhood of the orbit $Gx$ in $M$ such that the
pull-backs of $\omega_\cc$ and $\mu_\cc$ are $\omega'_\cc$ and
$\mu'_\cc$ as at (\ref{cmm}) above. Furthermore, in a
$K$-invariant neighborhood of $x$, the embedding
$W=\mu^{-1}_\cc(0)\hookrightarrow M$ is biholomorphic to
\[
G\times_{H^\cc}(0\times
(\mu_\cc^{\mathcal{W}})^{-1}(0))\hookrightarrow G\times
_{H^\cc}(\mathbf{j}V_x^\cc\oplus \mathcal{W}).
\]
\end{theorem}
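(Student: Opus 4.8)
The plan is to combine the holomorphic slice theorem of Sjamaar with the holomorphic equivariant Darboux theorem (Theorem \ref{equiDar}), exactly along the lines sketched in the paragraph preceding the statement. First I would recall the setup: fix $x\in\mu^{-1}(0)$ with stabilizer $H$ in $K$, so that $H^\cc$ is the $G$-stabilizer of $x$ where $G=K^\cc$; the orbit is $Y=Gx\cong G/H^\cc$. By the orthogonality computation already carried out in the text, $T_x(Gx)^\perp = \mathbf{j}(V_x^\cc)\oplus\mathcal{W}$ with $V_x=T_x(Kx)$ and $\mathcal{W}=(V_x\oplus\ii V_x\oplus\jj V_x\oplus\kk V_x)^\perp$, and $\mathcal{W}$ is a quaternionic vector space carrying a linear $H^\cc$-action. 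The holomorphic slice theorem of \cite{Sjamaar} then gives a $K$-equivariant biholomorphism from $G\times_{H^\cc}S$ (with $S=H^\cc B$ a suitable ball) onto a $K$-invariant open neighborhood $U$ of $Y$ in $M$. On the other side, $X=G\times_{H^\cc}(\mathbf{j}V_x^\cc\oplus\mathcal{W})$ is a hyperk\"ahler manifold, being the hyperk\"ahler quotient of $T^*G\times\mathcal{W}$ by $H$, so it carries a holomorphic symplectic form $\omega'_\cc$ and the complex moment map $\mu'_\cc$ displayed at (\ref{cmm}).

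Next I would pull back the holomorphic symplectic form $\omega_\cc$ of $M$ to (an open subset of) $X$ via the slice identification; call it $\omega_\cc$ still. Both $\omega_\cc$ and $\omega'_\cc$ are holomorphic symplectic forms defined near $Y\subset X$, and I must check that they agree on $TX|_Y$. This is the content one gets from the hyperk\"ahler quotient construction: along the zero section $Y$, both forms are determined by the same linear algebraic data, namely the splitting $TX|_Y = TY \oplus (\mathbf{j}V_x^\cc\oplus\mathcal{W})$ together with the $H^\cc$-invariant holomorphic symplectic pairing on the normal directions coming from $\omega_\cc$ on $M$ (which is the same pairing used to build $\omega'_\cc$ from $T^*G\times\mathcal{W}$). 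Since $X$ is a vector bundle over $Y=G/H^\cc$ with fiber $\mathbf{j}V_x^\cc\oplus\mathcal{W}$, scalar contraction of the fibers provides a differentiable family of $K$-equivariant holomorphic maps $\varphi_t:X\to X$ with $\varphi_1=\mathrm{id}$, $\varphi_0$ the bundle projection onto $Y$, and $\varphi_t|_Y=\mathrm{id}_Y$; restricting to the neighborhood $U$ we get exactly the hypotheses of Theorem \ref{equiDar}.

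Applying Theorem \ref{equiDar} with $\omega_0=\omega'_\cc$, $\omega_1=\omega_\cc$ produces a $K$-equivariant biholomorphism $f$ from a smaller neighborhood $U'$ of $Y$ in $X$ onto an open subset of $M$ with $f|_Y=\mathrm{id}_Y$ and $f^*\omega_\cc=\omega'_\cc$. The only remaining point is to upgrade the statement from symplectic forms to moment maps: I would argue that $f^*\mu_\cc$ and $\mu'_\cc$ are moment maps for the same $K$-action on $X$ with respect to the \emph{same} form $\omega'_\cc$, hence differ by a locally constant element of $(\mathfrak{k}^*\otimes\cc)^K$; since both vanish at $x$ (as $x\in\mu^{-1}(0)$ and $\mu'_\cc(e,0,0)=0$) and $Y$ is connected, they coincide. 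Composing $f$ with the slice biholomorphism then yields the asserted identification near $Gx$, under which $W=\mu_\cc^{-1}(0)$ corresponds to $(\mu'_\cc)^{-1}(0)$, and from the explicit formula (\ref{cmm}) one reads off $(\mu'_\cc)^{-1}(0)=G\times_{H^\cc}\bigl(0\times(\mu_\cc^{\mathcal{W}})^{-1}(0)\bigr)$ inside $G\times_{H^\cc}(\mathbf{j}V_x^\cc\oplus\mathcal{W})$, giving the second part of the theorem.

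The main obstacle I anticipate is verifying carefully that $\omega_\cc$ (pulled back from $M$) and $\omega'_\cc$ genuinely coincide on all of $TX|_Y$, not merely on the normal bundle or merely at the single point $x$: one needs the $G$-equivariance to propagate the coincidence from $x$ along the whole orbit $Y$, and one needs to know that the holomorphic symplectic pairing between $TY$ and the normal fiber also matches, which is where the precise identification of the normal slice as $\mathbf{j}V_x^\cc\oplus\mathcal{W}$ — together with the fact that $\mathbf{j}V_x^\cc$ is dual to $V_x^\cc=\mathfrak{k}^\cc/\mathfrak{h}^\cc\cdot x$ under $\omega_\cc$ — is essential. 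A secondary technical point is ensuring the contraction maps $\varphi_t$ are holomorphic and $K$-equivariant simultaneously, which holds because the bundle structure $X\to Y$ is $G$-equivariant and the fiberwise scaling is holomorphic; restricting the $\varphi_t$ to map $U$ into $U$ may require first shrinking $U$, which is harmless.
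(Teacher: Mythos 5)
Your proposal follows essentially the same route as the paper: Sjamaar's holomorphic slice theorem to identify a neighborhood of $Gx$ with $G\times_{H^\cc}S$, the hyperk\"ahler quotient construction of $X$ from $T^*G\times\mathcal{W}$ giving $\omega'_\cc$ and $\mu'_\cc$, the fiberwise contraction $\varphi_t$ of the bundle $X\to G/H^\cc$, and then Theorem \ref{equiDar} to match $\omega_\cc$ with $\omega'_\cc$, after which the moment maps agree up to a constant fixed by their common vanishing at $x$. Your additional care about checking the forms agree on all of $TX|_Y$ and normalizing the moment maps only fills in details the paper leaves implicit, so the argument is correct and not substantively different.
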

In particular, the singularity of $M/\!/\!/K$ at $\pi(x)$ is
precisely the hyperk\"ahler quotient $\mathcal{W}/\!/\!/H$ of the
vector space $\mathcal{W}$.

In the simplest case when $K=U(1)$ is the circle group, then it follows
immediately from this theorem that the blow-up of $W$ along the fixed point
set of $K$ is nonsingular, and hence $\widetilde{W/\!/K}$ has only orbifold
singularities.

\begin{corollary} Suppose that
$K = U(1) =H$.
Then the blow-up of $W=\mu_\cc^{-1}(0)$ along $Z_{H}$ where $Z_{H}$
is the fixed point set of $H$ in $W$ is nonsingular, and $\widetilde{W/\!/K}$ has only orbifold
singularities.
\end{corollary}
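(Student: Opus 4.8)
The plan is to reduce to a local computation at the $U(1)$-fixed points of $W$ and then apply the Lemma on blow-ups of homogeneous quadric cones. Since $H=K=U(1)$, a point $x\in\mu^{-1}(0)$ whose stabilizer is $H$ is fixed by all of $K$, so $V_x=T_x(Kx)=0$, hence $\mathbf{j}V_x^\cc=0$ and $\mathcal W=(V_x\oplus\ii V_x\oplus\jj V_x\oplus\kk V_x)^\perp=T_xM$; moreover $G=K^\cc=H^\cc=\cc^*$, so $G\times_{H^\cc}\mathcal W=\mathcal W$. Therefore Theorem \ref{theorem3.2} identifies a $K$-invariant neighbourhood of $x$ in $W$, together with its embedding in $M$, with $(\mu_\cc^{\mathcal W})^{-1}(0)\hookrightarrow\mathcal W$, where $\mu_\cc^{\mathcal W}$ is the complex moment map of the linear $U(1)$-action on the quaternionic vector space $\mathcal W\cong\hh^m$. (The hypothesis $x\in\mu^{-1}(0)$ in Theorem \ref{theorem3.2} enters only through the orthogonality of $\alpha_x,\ii\beta_x,\jj\gamma_x,\kk\delta_x$, which is vacuous here because $V_x=0$; so the same local model applies at \emph{every} $U(1)$-fixed point of $W$.)

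First I would pin down this local model. Diagonalising the $U(1)$-action, $\mathcal W\cong\bigoplus_{j=1}^m\hh$ with $t\cdot(z_j,w_j)=(t^{a_j}z_j,t^{-a_j}w_j)$ in coordinates for which $\omega_\cc=\sum_j dz_j\wedge dw_j$, so that $\mu_\cc^{\mathcal W}(z,w)=\sum_j a_j z_jw_j$ (the additive constant vanishes since $\mu_\cc(x)=0$). This is a homogeneous quadratic form whose radical is the weight-zero subspace $\bigoplus_{a_j=0}\hh_j=\mathcal W^{U(1)}$, so $(\mu_\cc^{\mathcal W})^{-1}(0)$ is a quadric cone and, by the standard description of the singular locus of a quadric, $\mathrm{Sing}\,(\mu_\cc^{\mathcal W})^{-1}(0)=\mathcal W^{U(1)}$, which under the biholomorphism of Theorem \ref{theorem3.2} corresponds to $W^{U(1)}=Z_H$. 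Globally this is consistent with the observation that, for $K=U(1)$, the moment-map identity gives $\mathrm{d}\mu_\cc(v)=\omega_\cc(\xi_x,v)$ for $v\in T_xM$, where $\xi$ generates $\mathfrak k\cong\rr$; since $\omega_2$ is nondegenerate, $\mathrm{d}\mu_\cc(x)=0$ forces $\xi_x=0$, so $\mu_\cc:M\to\mathfrak k^*\otimes\cc\cong\cc$ is a holomorphic submersion away from $M^{U(1)}$, and hence $W=\mu_\cc^{-1}(0)$ is nonsingular outside $W\cap M^{U(1)}=Z_H$.

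Next I would carry out the blow-up. Outside $Z_H$ the map $\mathrm{Bl}_{Z_H}W\to W$ is an isomorphism onto a nonsingular variety, so it suffices to work near $Z_H$. There, in the local model, write $\mathcal W=\mathcal W'\oplus\mathcal W^{U(1)}$ with $\mathcal W'=\bigoplus_{a_j\neq0}\hh_j$, on which $q:=\mu_\cc^{\mathcal W}$ restricts to a \emph{nondegenerate} homogeneous quadratic form; then $(\mu_\cc^{\mathcal W})^{-1}(0)=\{q=0\}\times\mathcal W^{U(1)}$ and blowing up along $Z_H=\{0\}\times\mathcal W^{U(1)}$ yields $\big(\mathrm{Bl}_0\{q=0\}\big)\times\mathcal W^{U(1)}$. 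By the Lemma, $\mathrm{Bl}_0\{q=0\}\cong\cO_E(2)$ over the exceptional divisor $E=\{q=0\}\subset\pp(\mathcal W')$, which is a \emph{smooth} quadric because $q$ is nondegenerate; hence $\mathrm{Bl}_0\{q=0\}$, and therefore $\widetilde W:=\mathrm{Bl}_{Z_H}W$, is nonsingular. Finally, since $\widetilde W$ is smooth and $U(1)$ has no proper infinite subgroup, a semistable point of $\widetilde W$ with positive-dimensional stabiliser would have to be $U(1)$-fixed, hence lie over $Z_H^{ss}$ in a weight line of the normal bundle; such a point is unstable for the perturbed linearisation used on the blow-up (see \cite{Kir2}), so every semistable point of $\widetilde W$ has finite stabiliser and $\widetilde{W/\!/K}=\widetilde W^{ss}/K$ has at worst orbifold singularities, as recorded in the discussion preceding Theorem \ref{thm3.1}.

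The main obstacle is the first step: reconciling the \emph{global} singular locus of $W$ — which is singular precisely because $0$ need not be a regular value of $\mu$ — with the \emph{local} quadratic normal form. Theorem \ref{theorem3.2} supplies the normal form near $Z_H$ and the submersion computation for $\mu_\cc$ shows there are no other singularities; once these are in hand, the rest reduces to the elementary geometry of a quadric cone and its blow-up, handled by the Lemma.
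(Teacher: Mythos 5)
Your proposal is correct and follows essentially the same route as the paper: the local quadratic model of Theorem \ref{theorem3.2} at the $U(1)$-fixed points, the factorization of $(\mu_\cc^{\mathcal W})^{-1}(0)$ as (affine quadric cone)$\times\mathcal W^{H}$, the blow-up Lemma, and \cite{Kir2} for the orbifold statement about the quotient. You merely spell out details the paper leaves implicit (the explicit form $\sum_j a_jz_jw_j$, nondegeneracy of the quadric on the nonzero-weight part so the exceptional divisor is smooth, and the submersion argument showing $W$ is already nonsingular off $Z_H$), which is a welcome but not essentially different argument.
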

\begin{proof} The \hk moment map $\mu$ is locally constant on the $K$-fixed
point set in $M$, so that $Z_{H}$ is a union of some of its connected components
and hence is nonsingular.
In the local model $X$, we can identify $GZ_{H}= Z_{H}$ with $0\times
\mathcal{W}^{H}$ where $\mathcal{W}^{H}$ is the subspace of
vectors fixed by $H$. Also $W$ in this local model is $0\times
(\mu_\cc^{\mathcal{W}})^{-1}(0)$. Since $\mu_\cc^\mathcal{W}$ is
homogeneous quadratic, $(\mu_\cc^{\mathcal{W}})^{-1}(0)$ is the
product of an affine quadric cone and $\mathcal{W}^{H}$ and so
after the blow-up along $Z_{H}$ we obtain a complex manifold
whose K\"{a}hler quotient $\widetilde{W/\!/K}$ has only orbifold
singularities.
\end{proof}

In general for any compact group $K$, we can describe the
singularities of $W$ along the blow-up center of the first blow-up
in the partial desingularization process as follows. Let $H_0$ be
the identity component of the stabilizer of a point in
$\mu^{-1}(0)$, whose dimension is maximal possible among such. Then
$GZ_{H_0}^{ss}$, where $Z_{H_0}$ is the fixed point set of $H_0$ in
$W$, is closed and nonsingular in a neighborhood of $\mu^{-1}(0)$.
This follows from \cite{Kir2} Corollary 5.10 and Lemma 5.11 for the
action of $G$ on $M$, together with the local model of Theorem
\ref{theorem3.2} and the fact that the \hk moment map for the action
of $H_0$ is locally constant on $Z_{H_0}$.

By Theorem \ref{theorem3.2}, we can locally identify $GZ_{H_0}^{ss}$
with $G \times_{H^\cc} (0\times \mathcal{W}^{H_0})$ where
$\mathcal{W}^{H_0}$ is the subspace of vectors fixed by $H_0$. Also
$W$ in this local model is $G\times _{H^\cc}(0\times
(\mu_\cc^{\mathcal{W}})^{-1}(0))$. Since $\mu_\cc^\mathcal{W}$ is
homogeneous quadratic, $(\mu_\cc^{\mathcal{W}})^{-1}(0)$ is the
product of an affine quadric cone and $\mathcal{W}^{H_0}$ and so
after the blow-up along $GZ_{H_0}^{ss}$ we obtain a line bundle
along the exceptional divisor.

By induction on the dimension of $H_0$, suppose we could resolve the
singularities of the complement of $GZ_{H_0}^{ss}$ by the partial
desingularization process. Then we apply Remark \ref{remindres} to
conclude that $W$ becomes smooth after the blow-ups in the partial
desingularization process of $M/\!/K$. Thus the proof of Theorem
\ref{thm3.1} is completed.

%

\begin{remit} \emph{ 
Note however that the construction of this partial desingularization
$\widetilde{M/\!/\!/K}$ depends on
the choice of preferred complex structure $\ii$, and $\widetilde{M/\!/\!/K}$
does not
inherit a \hk structure from that of $M$.}
\end{remit}

\begin{remit} \emph{
Another way to (partially) resolve the singularities of a hyperk\"ahler quotient
is to perturb the moment map by a small central element in
$\mathfrak{k}^*$. This is better in the sense that it gives us a partial
desingularization which is hyperk\"ahler again. However this resolution is
possible only when the center contains regular values of the
\hk moment map and it does not apply to
some examples, such as the moduli of Higgs bundles, or
$\mathbb{H}^n\otimes sl(2)/\!/\!/SL(2)$. In fact, there are no \hk
resolutions for the latter singularity \cite{KLS,KY}.
}
\end{remit}

\begin{remit}\emph{
One can use the partial desingularization process to calculate the
intersection cohomology of singular \hk varieties, such as the
moduli space of rank 2 semistable Higgs bundles with trivial
determinant over a Riemann surface. See \cite{KY2} for
details.}\end{remit}

%
%
%
%

\section{Surjectivity criterion} \label{s:s4}
In this section, we give a criterion for the surjectivity of the
hyperk\"ahler Kirwan map $H^*_K(M)\to H^*(M/\!/\!/K)$.

Let $M$ be a \hk manifold\footnote{A smooth manifold is a second
countable Hausdorff space equipped with an atlas. It has at most
countably many connected components.} on which a compact connected
Lie group $K$ acts preserving the \hk structure. Suppose we have a
\hk moment map $\mu=(\mu_1,\mu_\cc)$ which takes values in
$\mathfrak{k}^*\otimes (\rr\oplus \cc)=\mathfrak{k}^*\otimes
\rr^3$ for the action of $K$. Let $T$ be a maximal torus of $K$.
The following is a simple consequence of Mostow's theorem
\cite{Mostow}.
\begin{lemma} Consider the action of $T$ on $M$ and the
stabilizers of points in $M$. Let $\mathcal{T}$ be the set of
nontrivial subtori $T'$ of $T$ which are the identity components
of the stabilizers of points in $M$. Then $\mathcal{T}$ is
countable.
\end{lemma}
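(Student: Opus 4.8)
The plan is to show that each subtorus $T' \in \mathcal{T}$ is determined by its fixed-point set $M^{T'}$ in $M$, that distinct elements of $\mathcal{T}$ give rise to distinct fixed-point sets, and that $M$ has at most countably many connected submanifolds arising this way. The starting observation, which is where Mostow's theorem \cite{Mostow} enters, is that the action of the torus $T$ on $M$ is locally linearizable: around any point $p$ there is a $T$-invariant chart in which $T$ acts linearly on $T_pM$. Consequently the stabilizer $T_p$ of $p$ is the intersection of the kernels of the finitely many weights of the isotropy representation of $T_p$ on $T_pM$, so its identity component is cut out inside $T$ by finitely many characters; in particular there are only countably many closed subgroups of $T$ that can occur as stabilizers (since the character lattice $\hat T \cong \zz^{\dim T}$ is countable, and each stabilizer-type subgroup is the common kernel of a finite subset of $\hat T$). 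However, this alone does not immediately bound $\mathcal{T}$, because the same subtorus $T'$ could a priori be the identity component of stabilizers of points lying in different components of $M$ --- so the genuine content is to see that $\mathcal{T}$ itself (not the collection of components of various fixed-point sets) is countable.

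First I would fix, for each $T' \in \mathcal{T}$, a point $x_{T'} \in M$ whose stabilizer has identity component exactly $T'$, and let $C_{T'}$ be the connected component of $M^{T'}$ containing $x_{T'}$. By Mostow's linearization theorem, $M^{T'}$ is a closed submanifold of $M$, hence has at most countably many connected components (using the footnote's convention that a manifold is second countable and so has countably many components); running over all $T' \in \mathcal{T}$ we obtain a countable-to-one situation only once we know the relevant index set behaves well. The key step is therefore: for a \emph{fixed} connected submanifold $N$ of $M$, there are only countably many $T' \in \mathcal{T}$ with $C_{T'} = N$. Given such an $N$, pick any $x \in N$; then every such $T'$ is contained in the stabilizer $T_x$ of $x$, and $T'$ is the identity component of $T_x \cap (\text{stabilizer of some point})$. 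Since $T_x$ is a closed subgroup of the torus $T$, it has only countably many closed subgroups that are themselves identity components of stabilizer-subgroups (again because such subgroups are common kernels of finite sets of characters of $T_x$, and $\hat{T_x}$ is countable). Hence at most countably many $T'$ can map to this $N$.

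To assemble the argument: the map $T' \mapsto C_{T'}$ from $\mathcal{T}$ to the set $\mathcal{N}$ of connected components of the submanifolds $\{M^{T'} : T' \in \mathcal{T}\}$ has countable fibres by the previous step, so it suffices to check that $\mathcal{N}$ is countable. For this, note that $\mathcal{N}$ consists of connected submanifolds of $M$ of the form $M^{T'}$; any such submanifold is a union of $T$-orbit-closures, and, more to the point, we can stratify $M$ by isotropy type --- a standard consequence of Mostow linearization --- into countably many locally closed strata, each of which is a submanifold with countably many components, and each fixed-point component $C_{T'}$ is a union of strata, hence contains a distinguished stratum of maximal dimension; assigning to $C_{T'}$ such a stratum realizes $\mathcal{N}$ as the image of a countable-to-one map from a countable set. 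Therefore $\mathcal{N}$ is countable, the fibres of $T' \mapsto C_{T'}$ are countable, and so $\mathcal{T}$ is countable.

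The main obstacle I anticipate is the middle step: ruling out that a single subtorus $T'$ arises as the identity component of stabilizers of an \emph{uncountable} family of points spread across the manifold in a way that is not controlled by a countable stratification. The clean way around this --- and the point at which Mostow's theorem is really indispensable rather than merely convenient --- is to use that the isotropy-type stratification of $M$ under the $T$-action is locally finite and hence (by second countability) has countably many strata; everything else is bookkeeping with the countably many characters of subtori of $T$. If one prefers to avoid invoking the full stratification, an alternative is to argue directly that $T' \mapsto M^{T'}$ is injective on $\mathcal{T}$ --- which would hold if each $T'$ were the generic stabilizer on $M^{T'}$ --- but this can fail when fixed-point sets are nested, so the stratification route is the safer one to write up.
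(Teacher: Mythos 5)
Your first paragraph already contains a complete proof, and it is essentially the route the paper intends: the paper prints no argument at all, only the remark that the lemma is a simple consequence of Mostow's theorem (finiteness of the set of identity components of isotropy groups of a compact group action, applied locally or componentwise) together with the footnoted second countability of $M$. Indeed, by the slice/tube theorem the stabilizers in $T$ of points near a given orbit are common kernels of finitely many characters, the character lattice is countable, hence only countably many subgroups of $T$ can occur as stabilizers, hence only countably many identity components --- which is the lemma. Your stated worry, that the same subtorus might be the identity component of stabilizers of points in different components of $M$, is not an obstruction: $\mathcal{T}$ is a \emph{set} of subtori, and repetitions do not increase its cardinality, so the bound ``countably many possible stabilizer subgroups'' does immediately bound $\mathcal{T}$. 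In fact even less is needed: a subtorus of $T$ is the exponential of a rational subspace of $\mathfrak{t}$ (equivalently, it corresponds to a subgroup of the character lattice $\mathbb{Z}^{\dim T}$, and such subgroups are finitely generated), so $T$ has only countably many subtori altogether, and \emph{any} set of subtori of $T$ is countable, with no input from the action whatsoever. Consequently your second and third paragraphs (choice of $x_{T'}$, fixed-point components, isotropy-type stratification, the countable-to-one bookkeeping) are superfluous, and they happen to be the least rigorous part of the write-up: the claim that a given stratum can serve as the distinguished stratum of only countably many components $C_{T'}$ is asserted rather than proved (it does hold, again because the isotropy group of the stratum contains only countably many subtori), and the ``$T$-invariant chart in which $T$ acts linearly'' exists only at fixed points --- at a general point $p$ one must use a tube $T\times_{T_p}S$ with only $T_p$ acting linearly on the slice $S$. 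I would keep your first paragraph, fix the slice-theorem phrasing (or simply quote the countability of the set of subtori of $T$), and delete the rest.
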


As a consequence, the set of connected components of the fixed
point sets of $T'$ for $T'\in \mathcal{T}$ is countable and we
write it as $\{Z_j\,|\, j\in \zz_{\ge 0}\}$. For each $Z_j$ let
$T_j$ be the identity component of the stabilizer in $T$ of a
general point in $Z_j$.\footnote{This makes sense because all the
orbit type strata have even real codimension.}
\begin{lemma} For any $\gamma\in \mathrm{Lie}(T_j)$, $\langle
\mu (Z_j),\gamma\rangle $ consists of a single point in $\rr^3$.
\end{lemma}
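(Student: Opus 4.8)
The statement to prove is that for each connected component $Z_j$ of the fixed-point set of a subtorus $T_j \subset T$, and for each $\gamma \in \mathrm{Lie}(T_j)$, the function $\langle \mu(\cdot),\gamma\rangle : Z_j \to \rr^3$ is constant. The natural approach is to show this function has vanishing derivative along $Z_j$; since $Z_j$ is connected this forces it to be constant. So first I would fix a point $z \in Z_j$ and a tangent vector $v \in T_z Z_j$, and compute $\langle \mathrm{d}\mu(v),\gamma\rangle$ using the defining property of the moment map, namely $\langle \mathrm{d}\mu_i(v),\gamma\rangle = \omega_i(\gamma_z, v)$ for $i=1,2,3$ (here I write $\gamma_z$ for the value at $z$ of the vector field generated by $\gamma$).

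**Key steps.** The crucial observation is that $\gamma \in \mathrm{Lie}(T_j)$ generates a vector field which vanishes on $Z_j$: by definition $T_j$ is (the identity component of the stabilizer of a general point, and hence fixes) the whole component $Z_j$ — more precisely $Z_j$ is a component of the fixed locus $M^{T_j}$, so every element of $\mathrm{Lie}(T_j)$ generates a vector field vanishing identically on $Z_j$. In particular $\gamma_z = 0$ for all $z \in Z_j$. Then for any $v \in T_z Z_j$ we get $\omega_i(\gamma_z, v) = \omega_i(0,v) = 0$ for each $i$, hence $\langle \mathrm{d}\mu(v),\gamma\rangle = 0$. Since this holds for all $v$ tangent to $Z_j$ and $Z_j$ is connected (it is a connected component by construction), $\langle \mu(z),\gamma\rangle$ is independent of $z \in Z_j$, i.e. it consists of a single point of $\rr^3$.

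**The main obstacle.** There is essentially no analytic obstacle here; the only point requiring a little care is the bookkeeping that $\gamma$ really does generate a vector field vanishing on all of $Z_j$, not merely at one point. This is where one uses that $Z_j$ is a \emph{component of the fixed-point set} of $T_j$ (equivalently, that $T_j$ fixes $Z_j$ pointwise) rather than just the identity component of the stabilizer of a single point: the stabilizer of every point of $Z_j$ contains $T_j$, so $\gamma_z = 0$ throughout $Z_j$, and the argument goes through uniformly. Alternatively, one can phrase the same computation as: the function $\langle \mu,\gamma\rangle$ is, up to the symplectic forms, a Hamiltonian for the flow of $\gamma$, and $Z_j$ lies in the critical set of that flow, hence in a level set of each component $\langle \mu_i,\gamma\rangle$; connectedness of $Z_j$ finishes it.
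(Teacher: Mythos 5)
Your proof is correct, and since the paper simply omits this proof as ``elementary,'' your argument is exactly the intended one: $\gamma\in\mathrm{Lie}(T_j)$ generates a vector field vanishing along $Z_j$, so $\langle \mathrm{d}\mu_i(v),\gamma\rangle=\omega_i(\gamma_z,v)=0$ for $v\in T_zZ_j$, and connectedness of $Z_j$ gives constancy. The one point worth one extra line is your claim that $T_j$ fixes $Z_j$ pointwise: it follows because the points of $Z_j$ whose stabilizer has identity component $T_j$ form a dense subset (principal orbit type for the $T$-action on $Z_j$), and the fixed-point set $M^{T_j}$ is closed, so it contains all of $Z_j$ — you flag this issue correctly, and with that remark the argument is complete.
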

The proof of this lemma is elementary and we omit it.

Now for each $j$ we choose $\gamma_j\in \mathrm{Lie}(T_j)$ such
that $\langle \mu(Z_j),\gamma_j\rangle \ne 0$ whenever
$$\rho_j:=\mathrm{sup}\{\|\langle \mu(Z_j),
\gamma\rangle\|\,\,:\,\, \|\gamma\|=1, \gamma\in \mathrm{Lie}(T_j)
\}>0.$$ We require no condition for $\gamma_j$ when $\rho_j=0$, in
which case we have
$$\langle \mu(Z_j),\gamma\rangle =0\quad \text{ for any }\quad
\gamma\in \mathrm{Lie}(T_j).$$ Since $\langle
\mu(Z_j),\gamma_j\rangle\in \rr^3$ is nonzero whenever $\rho_j>0$, a
general choice of  hyperk\"ahler frame
$\{\mathbf{i},\mathbf{j},\mathbf{k}\}$ (being uncountably many)
satisfies the following condition:
\medskip

\noindent {\it $\langle \mu(Z_j),\gamma_j\rangle $ are not of the
form $(a,0,0)\in \rr^3$ with $a\ne 0$ for all $j$ with $\rho_j>0$,
i.e. }
\begin{equation} \label{condition}
\langle \mu_\cc(Z_j),\gamma_j\rangle \ne 0. \end{equation}
\medskip

\begin{definition} \label{defn4.3} We say a hyperk\"ahler frame
$\{\mathbf{i},\mathbf{j},\mathbf{k}\}$ is \emph{general} in the
group of frames $SO(3)$ if the above condition is satisfied.
\end{definition}

\begin{definition}
As in \cite{Kir} we let
$$W^{ss} = \{ x \in W \ | \ \mbox{the gradient flow from
 $x$ with respect to} -\|\mu_1\|^2$$
 $$ \mbox{ has a limit point in $\mu_1^{-1}(0)$} \}.$$
 We say that $x \in W$ is \emph{semistable} if $x \in W^{ss}$.
 \end{definition}
%

\begin{proposition}\label{PropNoUnstable}
For a general hyperk\"ahler frame
$\{\mathbf{i},\mathbf{j},\mathbf{k}\}$, if the gradient flow from
each $x\in W=\mu_\cc^{-1}(0)$ with respect to $-\|\mu_1\|^2$ is
contained in a compact set, then $W=W^{ss}$. If furthermore
$\mu^{-1}(0)$ is smooth, $W=\mu_\cc^{-1}(0)$ is smooth.
\label{prop:above}
\end{proposition}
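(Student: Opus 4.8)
The plan is to prove the two assertions in turn, using the general-frame condition \eqref{condition} to rule out the bad critical loci. For the first assertion, I would argue as in \cite{Kir} that when $-\|\mu_1\|^2$ is flow-closed (every gradient trajectory stays in a compact set), every trajectory converges to a connected component of the critical set of $\|\mu_1\|^2$ restricted to $W$, and $x \in W^{ss}$ precisely when the limit component lies in $\mu_1^{-1}(0)$. So it suffices to show there are no critical points of $\|\mu_1\|^2|_W$ lying outside $\mu_1^{-1}(0)$. The critical point analysis for $\|\mu_1\|^2$ is the standard one of \cite{Kir}: at a critical point $x$ (working on the ambient $M$ first, then restricting), $\mu_1(x) = \beta$ where $\beta \in \mathfrak{t}$ is the image, up to the $K$-action, of $\mu_1(x)$, and $x$ lies in the fixed point set of the subtorus generated by $\beta$; moreover $x$ belongs to some $Z_j$ with $\beta \in \mathrm{Lie}(T_j)$ and $\langle \mu_1(Z_j),\beta\rangle = \|\beta\|^2 > 0$ if $\beta \neq 0$. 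The subtle point is that we are constrained to $W = \mu_\cc^{-1}(0)$, but since $W$ is $K^\cc$-invariant and $\beta$-fixed points of $M$ that lie in $W$ are exactly the relevant loci, the same conclusion holds: a non-minimal critical point of $\|\mu_1\|^2|_W$ forces some $Z_j \cap W \neq \emptyset$ with $\langle \mu_1(Z_j),\gamma\rangle \neq 0$ for some $\gamma \in \mathrm{Lie}(T_j)$, hence $\rho_j > 0$, hence by the general-frame condition $\langle \mu_\cc(Z_j),\gamma_j\rangle \neq 0$, contradicting $Z_j \cap W \neq \emptyset$ (on which $\mu_\cc$ vanishes). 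I would need to be a little careful that the $\gamma$ arising from the critical point and the $\gamma_j$ chosen in advance can be compared; but since $\langle \mu(Z_j),-\rangle$ is linear on $\mathrm{Lie}(T_j)$ and $\langle\mu(Z_j),\gamma\rangle \neq 0$ for the critical $\gamma$ already gives $\rho_j > 0$, condition \eqref{condition} is triggered for the preselected $\gamma_j$, and then the real part being nonzero is irrelevant — it is the vanishing of $\mu_\cc$ on $Z_j$ that is contradicted. Hence every $x \in W$ flows to $\mu_1^{-1}(0)$, i.e. $W = W^{ss}$.

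For the second assertion, suppose in addition that $\mu^{-1}(0) = W \cap \mu_1^{-1}(0)$ is smooth. I want to conclude that $W$ itself is smooth. By Theorem \ref{theorem3.2}, near any $x \in \mu^{-1}(0)$ the variety $W$ is biholomorphic to $G \times_{H^\cc}(0 \times (\mu_\cc^{\mathcal{W}})^{-1}(0))$, which is smooth at $x$ if and only if $(\mu_\cc^{\mathcal{W}})^{-1}(0)$ is smooth at $0$, i.e. (since $\mu_\cc^{\mathcal{W}}$ is homogeneous quadratic) if and only if $\mu_\cc^{\mathcal{W}} \equiv 0$, equivalently $H$ acts on $\mathcal{W}$ with $\mu_\cc^{\mathcal{W}} = 0$. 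Now $\mu^{-1}(0)$ smooth at $x$ forces, via the local model, the hyperkähler quotient $\mathcal{W}/\!/\!/H$ to be smooth at the corresponding point; combined with the circle-type argument of the Corollary and the fixed-point analysis, $d\mu_1$ must be surjective at $x$, which by the orthogonality computation at the start of the section ($\alpha_x, \ii\beta_x, \jj\gamma_x, \kk\delta_x$ mutually orthogonal) forces $H$ to be finite, whence $\mathcal{W}^{H}$ has finite index issues aside... more precisely $H$ finite makes $H^\cc$ finite and $(\mu_\cc^{\mathcal{W}})^{-1}(0)$ all of $\mathcal{W}$ up to the finite quotient, so $W$ is smooth at $x$. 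The remaining points of $W$ are those not in $\mu_1^{-1}(0)$; but by the first part every such point flows under $-\|\mu_1\|^2$ into $\mu_1^{-1}(0)$, and smoothness is preserved along the (holomorphic, $K^\cc$-equivariant) gradient flow within the complex $K^\cc$-orbit directions, so smoothness on a neighborhood of $\mu^{-1}(0)$ in $W$ propagates to all of $W$.

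The main obstacle I anticipate is the last propagation step: transporting smoothness of $W$ from a neighborhood of $\mu^{-1}(0)$ to all of $W$ using the gradient flow. One must check that the flow-closedness hypothesis really does let one conclude that every point of $W$ lies in the saturation of the smooth locus — this requires knowing that the gradient flow of $-\|\mu_1\|^2$, restricted to $W$, moves points within their $K^\cc$-orbits (true because $\mathrm{grad}\,\|\mu_1\|^2$ at $x$ is $I$ times the vector field generated by $\mu_1(x) \in \mathfrak{k}$, tangent to the $K^\cc$-orbit) and that along such an orbit $W$ is either everywhere or nowhere smooth near the orbit. This is where the argument is most delicate and where I would spend the bulk of the write-up; the critical-point bookkeeping in the first part, by contrast, is routine once condition \eqref{condition} is in hand.
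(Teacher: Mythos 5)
Your treatment of the first assertion is essentially the paper's own argument: a critical point $x\in W$ of $\|\mu_1\|^2$ with $\mu_1(x)\ne 0$ may be moved by $K$ so that $\gamma=\mu_1(x)\in\mathrm{Lie}(T_j)$ for the component $Z_j$ containing $x$, whence $\langle\mu_1(Z_j),\gamma\rangle=\|\gamma\|^2>0$, so $\rho_j>0$, so \eqref{condition} gives $\langle\mu_\cc(Z_j),\gamma_j\rangle\ne 0$, contradicting $\mu_\cc(x)=0$; flow-closedness then forces every trajectory in $W$ to limit into $\mu_1^{-1}(0)$, i.e.\ $W=W^{ss}$. This part is fine and matches the paper.

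For the second assertion your route diverges from the paper's, and that is where the genuine gaps lie. The paper does not use the local model of Theorem \ref{theorem3.2}, nor any propagation of smoothness: it observes that smoothness of $\mu^{-1}(0)$ forces the stabilizers in $G=K^\cc$ of \emph{all} points of $W$ to be finite (stabilizers on $\mu^{-1}(0)$ are finite, and every point of $W=W^{ss}$ flows into $\mu^{-1}(0)$, with stabilizer dimension only able to increase in the limit), and then uses the nondegeneracy of $\omega_\cc$: since $\langle d\mu_\cc(v),\xi\rangle=\omega_\cc(\xi_x,v)$, the image of $d(\mu_\cc)_x$ is the annihilator of $\mathrm{Lie}(\mathrm{Stab}_G(x))$, so a finite stabilizer gives surjectivity of $d\mu_\cc$ at every point of $W$ and hence smoothness of $W=\mu_\cc^{-1}(0)$ outright. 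Your proposal misses this equivalence, and as a result leans on two steps that are not proofs as written. First, the inference ``smoothness of $\mu^{-1}(0)$, combined with the circle-type argument of the Corollary, forces $d\mu_1$ surjective at $x$ and hence $H$ finite'' is asserted, not argued; the honest statement is that one reads the smoothness hypothesis as $d\mu$ being surjective along $\mu^{-1}(0)$ (equivalently finite stabilizers there), which is also how the paper uses it. Second, and more seriously, your propagation step is flawed as stated: the time-$t$ flow of $-\|\mu_1\|^2$ is only a diffeomorphism of $M$, and a diffeomorphism preserving $W$ need not preserve complex-analytic smoothness of $W$. This can be repaired --- the finite-time flow from $x$ stays inside the orbit $Gx$ itself, and $G$ acts by biholomorphisms preserving $W$, so $W$ is analytically smooth at $x$ if and only if it is smooth at any other point of $Gx$, in particular at a point inside the neighbourhood of $\mu^{-1}(0)$ where your local-model argument applies --- but you leave exactly this point open (you flag it yourself as the unresolved obstacle), and even once fixed the argument is considerably longer than the paper's two-line use of finite stabilizers plus nondegeneracy of $\omega_\cc$.
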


\begin{proof}
A point $x\in W$ is a critical point of $\|\mu_1\|^2 $ if and only
if $\mu_1(x)_x=0$, where $\mu_1(x)\in \mathfrak{k}^*$ is
identified with an element of $\mathfrak{k}$ by using the fixed
invariant inner product on $\mathfrak{k}$. (See \cite[Lemma
3.1]{Kir}.) Suppose $x\in W$ is a critical point of $\|\mu_1\|^2$
but $\mu_1(x)\ne 0$. Without loss of generality, replacing $x$
with $kx$ for some $k \in K$,
 we may assume $\mu_1(x)\in \mathrm{Lie}(T)$ and $x\in
Z_j$ for some $j$ with
$\mathrm{Lie}(\mathrm{Stab}_Tx)=\mathrm{Lie}(T_j)$. Let
$\gamma=\mu_1(x)\in \mathrm{Lie}(T_j)$. Then $\langle
\mu_1(Z_j),\gamma\rangle$ is a nonzero constant $\|\gamma\|^2$.
Hence $\rho_j>0$, so $\mu_\cc(x)\ne 0$ because $ \langle
\mu_\cc(x),\gamma_j\rangle = \langle \mu_\cc(Z_j),\gamma_j\rangle
\ne 0$ by (\ref{condition}). This is a contradiction to $x\in W$.
Therefore $\mu_1(x)=0$ whenever $x\in W$ is a critical point of
$\|\mu_1\|^2$ and hence all the points in $W$ are semistable. If
$\mu^{-1}(0)$ is smooth, all the stabilizers in $G = K^\cc$ of
points in $W$
are finite 
 and hence 
  $d\mu_\cc$ is surjective
and so $W$ is smooth.
\end{proof}

\begin{definition} \label{defn:flow-closed}
Let $f:M\to \rr$ be a smooth function defined on a manifold. We
say $f$ is \emph{flow-closed} if the gradient flow of $f$ from any
$x\in M$ is contained in a compact set.
\end{definition}

\begin{corollary}\label{cor5.6} 
 For a general hyperk\"ahler frame
$\{\mathbf{i},\mathbf{j},\mathbf{k}\}$, if
$-\|\mu_1\|^2$ on $W=\mu_\cc^{-1}(0)$ and $-\|\mu_\cc\|^2$ on $M$
are flow-closed, then the restriction map $H^*_K(M)\to
H^*_K(\mu^{-1}(0))$ is surjective.\end{corollary}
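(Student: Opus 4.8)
The plan is to factor the restriction map $H^*_K(M)\to H^*_K(\mu^{-1}(0))$ through $H^*_K(W)$ and prove surjectivity of each factor separately, exactly as outlined in the introduction. First I would observe that $H^*_K(\mu^{-1}(0)) \cong H^*_K(W \cap \mu_1^{-1}(0))$ and that the composite factors as
\[
H^*_K(M) \longrightarrow H^*_K(W) \longrightarrow H^*_K(\mu^{-1}(0)),
\]
so it suffices to prove each arrow is surjective. For the first arrow, apply Theorem \ref{th1.2-new}: since $-\|\mu_\cc\|^2$ on $M$ is flow-closed by hypothesis, its Morse stratification is $K$-equivariantly perfect. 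The minimum stratum is $W=\mu_\cc^{-1}(0)$ (as $\mu_\cc$ is holomorphic with respect to $\ii$, its norm-square vanishes precisely on $W$), so equivariant perfection gives that $H^*_K(M)\to H^*_K(W)$ is surjective: the Thom--Gysin sequences for the stratification split, and in particular the restriction to the open set containing the minimum stratum, hence to the minimum stratum itself, is onto.

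For the second arrow, I would use equivariant Morse theory for $-\|\mu_1\|^2$ restricted to $W$. Here the key input is Proposition \ref{PropNoUnstable}: for a general \hk frame, flow-closedness of $-\|\mu_1\|^2$ on $W$ forces $W = W^{ss}$, i.e. there are \emph{no} non-minimal critical strata --- every critical point of $\|\mu_1\|^2$ on $W$ lies in $\mu_1^{-1}(0)$. Consequently the Morse stratification of $\|\mu_1\|^2$ on $W$ is trivial: $W$ itself is the minimal stratum and $W$ deformation retracts $K$-equivariantly onto a neighborhood of $\mu^{-1}(0)$ via the gradient flow. This immediately yields that $H^*_K(W)\to H^*_K(\mu^{-1}(0))$ is an isomorphism (in particular surjective). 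Note this is where generality of the frame is essential, and it is why the statement only needs flow-closedness of the two norm-squares rather than any regularity of $0$.

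The main obstacle, and the step deserving the most care, is the first arrow: justifying that Theorem \ref{th1.2-new} really applies to $-\|\mu_\cc\|^2$ on the possibly-noncompact manifold $M$, and that $K$-equivariant perfection of the Morse stratification of $\|\mu_\cc\|^2$ does give surjectivity onto $H^*_K$ of the minimum stratum $W$. One must check that the stratification is locally finite and that the relevant spectral sequence / long exact sequences degenerate as in the compact symplectic case of \cite{Kir}; the flow-closedness hypothesis is exactly what substitutes for compactness in making the Morse-theoretic argument of \S\ref{s:oldapp} go through. Once that is in hand, composing the two surjections (resp. surjection and isomorphism) completes the proof.
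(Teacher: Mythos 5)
Your proposal matches the paper's own proof: the paper also factors the map through $H^*_K(W)$, gets surjectivity of $H^*_K(M)\to H^*_K(W)$ from Theorem \ref{thm-equivperfect} (equivariant perfection of $\|\mu_\cc\|^2$ under flow-closedness), and gets $H^*_K(W)\cong H^*_K(\mu^{-1}(0))$ from Proposition \ref{PropNoUnstable} ($W=W^{ss}$ for a general frame, so $W$ retracts equivariantly onto $\mu^{-1}(0)$ along the gradient flow of $-\|\mu_1\|^2$). Your additional caution about the noncompact setting is precisely what the flow-closedness hypothesis in Theorem \ref{thm-equivperfect} is there to handle, so the argument is essentially identical to the paper's.
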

\begin{proof} The corollary follows from Theorem
\ref{thm-equivperfect} below which says that $-\|\mu_\cc\|^2$ is
equivariantly perfect if it is flow-closed. This theorem is proved
in \cite[\S5]{k6}. Because \cite{k6} contains errors and is
unpublished, we have reproduced an edited version of this section
in Section \ref{s:oldapp}. 

By Theorem \ref{thm-equivperfect}, the restriction map
$H^*_K(M)\to H^*_K(W)$ is surjective, and by Proposition
\ref{prop:above} above we have $W=W^{ss}$ and $H^*_K(W)\cong
H^*_K(\mu^{-1}(0))$. Thus we obtain the surjectivity.
\end{proof}

We have proved

\begin{theorem}\label{ThSurjCrit} \emph{(surjectivity criterion)}
 For a general hyperk\"ahler frame
$\{\mathbf{i},\mathbf{j},\mathbf{k}\}$,  if
\begin{enumerate}
\item $-\|\mu_\cc\|^2$ on $M$ is flow-closed, and \item
$-\|\mu_1\|^2$ on $W=\mu_\cc^{-1}(0)$ is flow-closed
\end{enumerate} then the restriction map
$$H^*_K(M)\to H^*_K(\mu^{-1}(0))$$ is
surjective. If furthermore 0 is a regular value of
the \hk moment map $\mu$, then  $W=\mu_\cc^{-1}(0)$ is
smooth and the map
$$\kappa: H^*_K(M)\to H^*_K(\mu^{-1}(0))\cong H^*(\mu^{-1}(0)/K)$$ is
surjective.
\end{theorem}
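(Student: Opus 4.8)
The plan is to assemble results already established in the paper; the statement has essentially no content of its own beyond those inputs.

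For the first assertion I would simply invoke Corollary \ref{cor5.6}, but it is worth recalling the two stages behind it. In the first stage one uses hypothesis (1): since $W=\mu_\cc^{-1}(0)$ is the minimum level set of $\|\mu_\cc\|^2$ on $M$, it is the $K$-equivariant deformation retract (via the gradient flow) of the minimal stratum of the Morse stratification of $\|\mu_\cc\|^2$; by Theorem \ref{th1.2-new} that stratification is $K$-equivariantly perfect once $-\|\mu_\cc\|^2$ is flow-closed, and the standard consequence of perfection --- the long exact sequences of the filtration by strata split --- gives surjectivity of the restriction $H^*_K(M)\to H^*_K(W)$. In the second stage one uses that the frame is general (Definition \ref{defn4.3}): hypothesis (2) is exactly the hypothesis of Proposition \ref{PropNoUnstable}, so $W=W^{ss}$, and the gradient flow of $-\|\mu_1\|^2$ on the $G$-invariant variety $W$ then $K$-equivariantly deformation retracts $W=W^{ss}$ onto $W\cap\mu_1^{-1}(0)=\mu^{-1}(0)$, whence $H^*_K(W)\cong H^*_K(\mu^{-1}(0))$. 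Composing the two gives surjectivity of $H^*_K(M)\to H^*_K(\mu^{-1}(0))$.

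For the second assertion I would add the regular-value observation. If $0$ is a regular value of $\mu$, then $\mu^{-1}(0)$ is a smooth submanifold of $M$, and by the last sentence of Proposition \ref{PropNoUnstable} this already forces $W=\mu_\cc^{-1}(0)$ to be smooth. Moreover, for $x\in\mu^{-1}(0)$ the surjectivity of $d\mu_x$ implies that of $d\mu_{1,x}$, whose cokernel is canonically dual to $\mathrm{Lie}(\mathrm{Stab}_K(x))$; hence $\mathrm{Stab}_K(x)$ is finite and $K$ acts locally freely on $\mu^{-1}(0)$ (cf. \cite{Hit1}). Then $\mu^{-1}(0)/K$ is an orbifold and the map $EK\times_K\mu^{-1}(0)\to\mu^{-1}(0)/K$ has rationally acyclic fibres, so $H^*_K(\mu^{-1}(0))\cong H^*(\mu^{-1}(0)/K)$ with rational coefficients and the composite $\kappa$ is surjective.

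I do not expect a genuine obstacle in the theorem itself: one only has to check that hypotheses (1) and (2) are precisely what Theorem \ref{th1.2-new} and Proposition \ref{PropNoUnstable} require, plus the elementary remark about locally free actions. The real difficulty lies behind those inputs --- proving $K$-equivariant perfection of the Morse stratification of $\|\mu_\cc\|^2$ for a function that need not be proper, where convergence of the gradient flows has to be controlled (carried out in Section \ref{s:oldapp}), and the general-frame argument of Proposition \ref{PropNoUnstable}, in which condition (\ref{condition}) is what eliminates all non-minimal critical points of $\|\mu_1\|^2$ on $W$ and yields smoothness of $W$ whenever $\mu^{-1}(0)$ is smooth.
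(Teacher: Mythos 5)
Your proposal is correct and follows essentially the same route as the paper, which simply records this theorem as the summary of Corollary \ref{cor5.6} (whose proof combines Theorem \ref{thm-equivperfect} for surjectivity of $H^*_K(M)\to H^*_K(W)$ with Proposition \ref{PropNoUnstable} for $W=W^{ss}$ and $H^*_K(W)\cong H^*_K(\mu^{-1}(0))$), the regular-value addendum coming from the last sentence of Proposition \ref{PropNoUnstable} and the standard fact that finite stabilizers give $H^*_K(\mu^{-1}(0))\cong H^*(\mu^{-1}(0)/K)$ with rational coefficients. Your explicit remarks on the deformation retractions and on local freeness are just the details the paper leaves implicit.
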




\begin{remit} \emph{Suppose that
$\{\mathbf{i},\mathbf{j},\mathbf{k}\}$ is a  general hyperk\"ahler frame
satisfying conditions (1) and (2) of Theorem \ref{ThSurjCrit}, so that
 the restriction map
\begin{equation} \label{one} H^*_K(M)\to  H^*_K(W) = H_K^*(W^{ss}) \cong
H^*_K(\mu^{-1}(0))\end{equation}
is
surjective, but suppose that 0 is not a regular value of
the \hk moment map $\mu$. Then as in \S3 we can construct a partial
desingularization $\widetilde{M/\!/\!/K}=\tilde{W}^{ss}/G$ of the \hk quotient
$M/\!/\!/K$ with respect to the preferred complex structure $\ii$
(where $G=K^\cc$ is the complexification of $K$ and $\tilde{W}^{ss}$ is
an open subset of a blow-up of $W = \mu_\cc^{-1}(0)$),
and the blow-down map induces a map on $K$-equivariant cohomology
\begin{equation} \label{two}
H_K^*(W^{ss}) \to H^*_K(\tilde{W}^{ss}) \cong H^*(\tilde{W}^{ss}/G)
=H^*(\widetilde{M/\!/\!/K}). \end{equation}
The intersection cohomology $IH^*(M/\!/\!/K)$ of $M/\!/\!/K$ (with respect
to the middle perversity and rational coefficients) is a direct summand of
$H^*(\widetilde{M/\!/\!/K})$ by the decomposition theorem of Beilinson, Bernstein,
Deligne and Gabber \cite[6.2.5]{BBDG}. This direct decomposition is not
in general canonical, but
Woolf \cite{Woolf} shows that the partial desingularization construction
(and more generally any {\em $G$-stable resolution} in the sense of
\cite[\S3]{Woolf}) can be used to define a projection
\begin{equation} \label{projj} H^*(\widetilde{M/\!/\!/K}) \to IH^*(M/\!/\!/K),
\end{equation} and also
a projection
\begin{equation} \label{decomp} IH^*_K(W^{ss}) \to IH^*(W/\!/K) = IH^*(M/\!/\!/K) \end{equation}
such that the composition of (\ref{decomp}) with the canonical map
from $H^*_K(W^{ss})$ to $IH^*_K(W^{ss})$ agrees with the composition of
(\ref{projj}) with (\ref{two}).
The argument of \cite{Woolf}
uses the decomposition theorem at the level of the equivariant derived category
(see \cite[\S5]{BL}) as a decomposition of complexes of sheaves up to
quasi-isomorphism, and it can
be applied to $M$ and restricted to its complex subvariety $W = \mu_\cc^{-1}(0)$
to give a projection from the hypercohomology of the restriction to
$W^{ss}=W$ of ${\mathcal IC}^\bullet_K(M^{ss})$ (which is just $H^*_K(W^{ss})$
since $M$ is nonsingular) to the hypercohomology of the restriction to
$W/\!/K = M/\!/\!/K$ of ${\mathcal IC}^\bullet(M/\!/K)$.
We thus find that
 the partial desingularization construction gives us a map
$$H^*_K(M) \to IH^*(M/\!/\!/K)$$
which is the composition of (\ref{one}), (\ref{two}) and the projection (\ref{projj})
from $H^*(\widetilde{M/\!/\!/K})$ onto $IH^*(M/\!/\!/K)$,
and factorises through a surjection from $H^*_K(M)$
to the hypercohomology of the restriction to
$W/\!/K = M/\!/\!/K$ of ${\mathcal IC}^\bullet(M/\!/K)$.
} \end{remit}


\section{Morse flow of the norm square of the complex moment map} \label{s:oldapp}

This section is a corrected version of part of the unpublished
preprint \cite{k6} (mainly \cite{k6} \S5). 
The proof of Lemma \ref{l5.5} differs from the original
only because some minor errors (in particular an unimportant sign error) in \cite{k6} are corrected, and as a result
the matrix in (\ref{e:sign}) differs by one sign from the matrix in \cite{k6}
(first equation p. 35). The conclusion
of \S5 of \cite{k6} (that the norm square of the complex moment map
is an equivariantly perfect Morse function) remains valid after these modifications.
The proof of \cite{k6} Lemma 4.2 is, however, incorrect  (as originally pointed out by Simon Donaldson), and the conclusions
of \cite[\S4]{k6} are therefore invalid.




Consider the function
$f_{23}: M \to \rr$ defined by
$$ f_{23}(x) = \|\mu_2(x)\|^2 + \|\mu_3(x)\|^2=\|\mu_\cc(x)\|^2. $$
For all $x \in M$ we have
\begin{equation} \label{f23} \grad f_{23} (x) = 2\jj (\mu_2(x)_x - \ii   \mu_3(x)_x)
= 2 \kk (\mu_3(x)_x + \ii \mu_2(x)_x). \end{equation}

The purpose of this section is to prove the following.
\begin{theorem}\label{thm-equivperfect}
The Morse stratification of the function $f_{23}=\|\mu_\cc\|^2$ is
$K$-equivariantly perfect if $-f_{23}$ is flow-closed.
\end{theorem}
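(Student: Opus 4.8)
The strategy is to follow the classical Kirwan/Atiyah–Bott scheme for proving that the Morse stratification of $f_{23} = \|\mu_\cc\|^2$ is equivariantly perfect, adapting it to deal with the fact that $0$ need not be a regular value and that the gradient flow of $-f_{23}$ (rather than $+f_{23}$, as one would naively expect from the symplectic case) is what converges. First I would establish that, under the flow-closedness hypothesis, the gradient flow of $-f_{23}$ defines a smooth $K$-invariant stratification $\{M_\beta\}$ of $M$ indexed by the critical values of $f_{23}$, with $M_0 = W = \mu_\cc^{-1}(0)$ as the open stratum (containing the minimum $f_{23} = 0$) and each higher stratum $M_\beta$ flowing down to a critical set $C_\beta$. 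The key computation, carried out via the formula (\ref{f23}) for $\grad f_{23}$, is that at a critical point the Hessian of $f_{23}$ is related to the moment map data in such a way that the negative normal bundle to $C_\beta$ in $M$ has a well-defined even rank $\lambda_\beta$, and the ambient function $f_{23}$ is minimally degenerate in the sense of \cite{Kir} along each $C_\beta$. This is where the sign correction alluded to in the introduction matters: one needs the Hessian to be nondegenerate in the normal directions and the index to come out as claimed, and the corrected matrix in (\ref{e:sign}) is precisely what makes this work.

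Next I would run the equivariant Morse inequalities. Writing $M^{\le c}$ for the union of strata with $f_{23} \le c$, one has the Thom–Gysin long exact sequence in $K$-equivariant cohomology relating $H^*_K(M^{\le c})$, $H^*_K(M^{< c})$ and $H^{*-\lambda_\beta}_K(C_\beta)$ (using the Thom isomorphism for the negative normal bundle, which is an equivariant real vector bundle, hence orientable after passing to rational coefficients and the $K$-equivariant setting, so that the Thom class exists). The standard argument then reduces equivariant perfection — i.e. the splitting of all these sequences into short exact sequences — to showing that for each non-minimal stratum the composite $H^*_K(C_\beta) \to H^{*+\lambda_\beta}_K(M^{\le c}) \to H^{*+\lambda_\beta}_K(C_\beta)$ (cup product with the equivariant Euler class of the negative normal bundle) is injective. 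As in Atiyah–Bott, this follows if that Euler class is not a zero-divisor, which in turn follows from the existence of a subtorus of $K$ acting on the fibres of the negative normal bundle with no nonzero fixed vectors — and that subtorus is produced from the stabilizer data of $C_\beta$ exactly as in the symplectic case, since $\mu_\cc$ is $K$-equivariant and the critical set $C_\beta$ sits inside a fixed-point set of an appropriate subgroup.

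The one genuinely new issue, compared to \cite{Kir}, is that $f_{23}$ is not proper (indeed $M$ is typically noncompact), so the existence and the completeness/convergence of the gradient flow cannot be taken for granted — this is exactly what the flow-closedness hypothesis is inserted to guarantee. Concretely, flow-closedness ensures that the descending flow from any point has compact closure, hence a limit point in some critical set, so that the stratification is well-defined, exhaustive, and locally finite in the relevant sense; it also guarantees that $M^{\le c}$ deformation retracts $K$-equivariantly onto a neighbourhood of the sublevel critical sets, which is what is needed to make the Thom–Gysin sequences and the inductive colimit argument go through. I expect this point — carefully setting up the Morse theory of a non-proper function using only flow-closedness, and in particular checking that the inductive limit over critical values computes $H^*_K(M)$ correctly and that each stratum is locally closed with the right normal structure — to be the main technical obstacle; the perfection argument itself, once the stratification is in place, is then a faithful transcription of the Atiyah–Bott–Kirwan method, with the sign-corrected Hessian computation of Lemma \ref{l5.5} supplying the needed nondegeneracy.
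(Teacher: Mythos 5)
Your overall strategy coincides with the paper's: show that $f_{23}=\|\mu_\cc\|^2$ is a minimally degenerate Morse function in the sense of \cite{Kir}, and then run the standard equivariant Morse theory package (Thom--Gysin sequences, Euler class injectivity via a subtorus acting with no nonzero fixed vectors), with flow-closedness replacing properness so that the stratification by negative gradient flow is defined and each stratum retracts onto its critical set. The genuine gap is that you assert, rather than prove, precisely the part that constitutes the paper's proof. The two substantive steps are: (i) the structure of the critical locus --- at a critical point $x$ one has $\mu_2(x)_x=\mu_3(x)_x=0$ and $[\mu_2(x),\mu_3(x)]=0$ (Lemmas \ref{l:wilkin} and \ref{l-5.2}; the commutation relation is obtained by applying the derivative of the $\ii$-holomorphic map $\mu_\cc$ to the relation $(\beta_3)_x+\ii(\beta_2)_x=0$ coming from (\ref{f23})), so that the critical set decomposes into the pieces $C_{\beta_2,\beta_3}=K\bigl(Z_{\beta_2,\beta_3}\cap\mu_2^{-1}(\beta_2)\cap\mu_3^{-1}(\beta_3)\bigr)$ indexed by Weyl orbits of commuting pairs $(\beta_2,\beta_3)$; and (ii) minimal degeneracy along each $C_{\beta_2,\beta_3}$, i.e.\ the construction of a smoothly varying, $Stab(\beta_2,\beta_3)$-invariant complement $U_x$ to $T_xKZ_{\beta_2,\beta_3}$ on which the Hessian $H_{23}$ is nondegenerate (Lemma \ref{l5.5}). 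Your text says that ``the corrected matrix in (\ref{e:sign}) is precisely what makes this work'' and that the ``sign-corrected Hessian computation of Lemma \ref{l5.5}'' supplies the needed nondegeneracy --- in other words, you are citing the paper's own computation instead of giving one. Nothing in the proposal shows why $H_{23}$ is nondegenerate transverse to $KZ_{\beta_2,\beta_3}$, and this is not a routine transcription of the symplectic case: the paper needs the splitting $U_x=V_x+W_x$, the anticommutation identity $H_2H_3+H_3H_2=0$ (from the pointwise orthogonality of $\grad\mu_2^{(\beta_2)}$ and $\grad\mu_3^{(\beta_3)}$) to get nondegeneracy of $\mu_2^{(\beta_2)}+\mu_3^{(\beta_3)}$ on $V_x$ (Lemma \ref{l3.9}), and the adjugate-matrix argument for $\mathcal{M}'$ establishing $\phi^{-1}(L)\subset L$ to get nondegeneracy on $W_x$.

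Two smaller inaccuracies: the minimum stratum is not $W=\mu_\cc^{-1}(0)$ itself but the open set of points whose downward flow converges to $W$ (this is exactly where flow-closedness enters); and the claims that the index is even, the negative normal bundle is orientable, and a suitable subtorus acts on it with no nonzero fixed vectors are stated as holding ``exactly as in the symplectic case,'' whereas they have to be extracted from the hyperk\"ahler critical-set structure above (the torus $T_{\beta_2\beta_3}$ and the description (\ref{e:5.4})); the paper defers this to \cite{Kir} and \cite{k6} only after the minimal degeneracy has been established.
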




\begin{lemma} \label{l:wilkin} 
If $\grad f_{23}(x) = 0$ and $\beta_2 = \mu_2(x)$ and $\beta_3 =
\mu_3(x)$ then
 $$[\beta_2, \beta_3] = 0 $$
\end{lemma}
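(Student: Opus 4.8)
The plan is to work at a critical point $x$ of $f_{23}$, where by equation (\ref{f23}) the vanishing of $\grad f_{23}(x)$ gives
$$\jj(\mu_2(x)_x - \ii\mu_3(x)_x) = 0,$$
equivalently $\mu_2(x)_x = \ii\mu_3(x)_x$ as elements of $T_xM$. Write $\beta_2 = \mu_2(x)$ and $\beta_3 = \mu_3(x)$, viewed as elements of $\mathfrak{k}$ via the fixed invariant inner product. The key identity to exploit is $K$-equivariance of the moment map: differentiating $\mu_i(kx) = \mathrm{Ad}_k^*\mu_i(x)$ in the direction $\beta_2 \in \mathfrak{k}$ gives $\mathrm{d}\mu_i((\beta_2)_x) = \mathrm{ad}^*_{\beta_2}\mu_i(x)$, i.e. $\langle \mathrm{d}\mu_i((\beta_2)_x),\eta\rangle = \langle \mu_i(x),[\eta,\beta_2]\rangle = -\langle[\beta_2,\mu_i(x)],\eta\rangle$ for all $\eta\in\mathfrak{k}$ (using the invariance of the inner product to identify $\mathfrak{k}^*$ with $\mathfrak{k}$). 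So the moment map conditions translate infinitesimal group motion into Lie bracket expressions.

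First I would compute $\langle \mathrm{d}\mu_3((\beta_2)_x),\beta_3\rangle$ in two ways. On the one hand, by the moment map defining property, $\langle\mathrm{d}\mu_3(v),\xi\rangle = \omega_3(\xi_x,v)$, so $\langle\mathrm{d}\mu_3((\beta_2)_x),\beta_3\rangle = \omega_3((\beta_3)_x,(\beta_2)_x) = g(\kk(\beta_3)_x,(\beta_2)_x)$. Now substitute the critical point relation $(\beta_2)_x = \ii(\beta_3)_x$: this becomes $g(\kk(\beta_3)_x,\ii(\beta_3)_x) = g(\kk\ii^{-1}\ii(\beta_3)_x, \ii(\beta_3)_x)$ — more directly, $g(\kk(\beta_3)_x, \ii(\beta_3)_x) = -g(\ii\kk(\beta_3)_x,(\beta_3)_x) = g(\jj(\beta_3)_x,(\beta_3)_x) = 0$ since $\omega_2$ is alternating (here I used $\ii\kk = -\jj$ and that $\ii$ is $g$-orthogonal). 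On the other hand, by equivariance the same quantity equals $\langle \mathrm{ad}^*_{\beta_2}\beta_3,\beta_3\rangle = \langle \beta_3,[\beta_3,\beta_2]\rangle$ up to sign, which vanishes automatically. That particular pairing is therefore not enough; I instead need to pair against a general $\eta$, or to pair $\mathrm{d}\mu_2((\beta_2)_x)$ against $\beta_3$ and compare.

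The cleaner route: compute $\langle\mathrm{d}\mu_2((\beta_3)_x),\beta_2\rangle$. By equivariance this is $\langle \mathrm{ad}^*_{\beta_3}\beta_2,\beta_2\rangle = \langle\beta_2,[\beta_2,\beta_3]\rangle = 0$. By the moment map property it is $\omega_2((\beta_2)_x,(\beta_3)_x) = g(\jj(\beta_2)_x,(\beta_3)_x)$. Using $(\beta_2)_x = \ii(\beta_3)_x$ this is $g(\jj\ii(\beta_3)_x,(\beta_3)_x) = g(-\kk(\beta_3)_x,(\beta_3)_x) = -\omega_3((\beta_3)_x,(\beta_3)_x)$... again zero identically. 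The point I am circling toward is that the self-pairings all vanish trivially, so one must extract $[\beta_2,\beta_3]$ by a more global argument: I would instead differentiate $\|\mu_\cc\|^2$-type quantities, or better, use that at a critical point the $K$-orbit direction $(\beta_2)_x$ lies in the kernel of $\mathrm{d}f_{23}$, and that the Hessian or the second-order behaviour forces $[\beta_2,\beta_3]=0$. Concretely I would consider the vector $v = [\beta_2,\beta_3]_x$ and show $g(v, w) = 0$ for enough $w$ — specifically pairing $v$ against moment map gradients. Since $\mu$ is $K$-equivariant and $\beta_2, \beta_3$ both come from the critical locus, the element $[\beta_2,\beta_3]$ annihilates $\mu_2(x)$ and $\mu_3(x)$ under the coadjoint action (as $\mathrm{ad}^*_{[\beta_2,\beta_3]} = [\mathrm{ad}^*_{\beta_2},\mathrm{ad}^*_{\beta_3}]$ applied appropriately), and combined with $(\beta_2)_x = \ii(\beta_3)_x$ one shows $[\beta_2,\beta_3]_x = 0$, then $\langle[\beta_2,\beta_3],[\beta_2,\beta_3]\rangle$ is computed via a moment map pairing to be zero. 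The main obstacle is precisely this last step: organizing the bracket identities and the $\ii$-twist so that the norm $\|[\beta_2,\beta_3]\|^2$ (rather than a trivially-zero alternating pairing) appears, which I expect requires pairing $\mathrm{d}\mu_2$ against the vector field generated by $[\beta_2,\beta_3]$ and using $\jj\ii = -\kk$ together with compatibility of $g$ with all three complex structures.
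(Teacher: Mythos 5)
There is a genuine gap, and you flag it yourself: every pairing you actually compute is of the form $\langle\beta,[\beta,\cdot]\rangle$ or an alternating self-pairing and so vanishes identically, and the fallback you sketch at the end is not a proof. The claim that $[\beta_2,\beta_3]$ annihilates $\mu_2(x)$ and $\mu_3(x)$ under the coadjoint action is unjustified (it is essentially what is to be proved), and even if you showed that the vector field generated by $[\beta_2,\beta_3]$ vanishes at $x$, that would not give $[\beta_2,\beta_3]=0$ in $\mathfrak{k}$; no Hessian or second-order argument is needed, and none is supplied.

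The missing ingredient is the holomorphy of $\mu_\cc=\mu_2+\sqrt{-1}\mu_3$ with respect to $\ii$, equivalently the Cauchy--Riemann relations $d\mu_2(x)(\ii v)=-d\mu_3(x)(v)$ and $d\mu_3(x)(\ii v)=d\mu_2(x)(v)$ (immediate from $\omega_I=g(-,I-)$ and the quaternion identities), used together with $K$-equivariance, $d\mu_j(x)(\eta_x)=\mathrm{ad}^*_\eta\,\mu_j(x)$ for all $\eta\in\mathfrak{k}$ --- i.e.\ your correct instinct to ``pair against a general $\eta$'', but with the $\ii$-twist transferred from the domain to the target via holomorphy rather than absorbed into a self-pairing. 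The paper's proof is one line: apply $d\mu_\cc(x)$ to the critical-point relation $(\beta_3)_x+\ii(\beta_2)_x=0$ coming from (\ref{f23}); holomorphy turns the $\ii$ into multiplication by $\sqrt{-1}$ in $\mathfrak{k}^*\otimes\cc$ and equivariance turns each term into a bracket, so $0=[\beta_3+\sqrt{-1}\beta_2,\,\mu_\cc(x)]=[\beta_3+\sqrt{-1}\beta_2,\,\beta_2+\sqrt{-1}\beta_3]=2[\beta_3,\beta_2]$. In your notation the same computation reads: apply $d\mu_3(x)$ to $(\beta_2)_x-\ii(\beta_3)_x=0$; equivariance gives $d\mu_3(x)((\beta_2)_x)\cong[\beta_2,\beta_3]$, while the Cauchy--Riemann relation gives $d\mu_3(x)(\ii(\beta_3)_x)=d\mu_2(x)((\beta_3)_x)\cong[\beta_3,\beta_2]$, whence $[\beta_2,\beta_3]=[\beta_3,\beta_2]$ and therefore $[\beta_2,\beta_3]=0$. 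Without invoking holomorphy (which your proposal never does), the bracket cannot be made to appear nondegenerately, which is exactly why all your pairings collapsed to zero.
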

\begin{proof}
Recall that $$ \mu_\cc=\mu_2 + \sqrt{-1} \mu_3: M \to
\mathfrak{k}^* \otimes \cc $$ is $K$-equivariant with respect to
the coadjoint action on $\mathfrak{k}^*$ and holomorphic with
respect to the complex structure $\ii$ on $M$. By (\ref{f23}) we
have $(\beta_3)_x + \ii (\beta_2)_x = 0$, and so applying the
derivative of $\mu_\cc$ at $x$ to this we obtain
$$0 = [\beta_3 + \sqrt{-1}\beta_2, \mu_\cc(x)] = [\beta_3 + \sqrt{-1}\beta_2,
\beta_2 + \sqrt{-1}\beta_3]
 = 2 [\beta_3, \beta_2].$$
\end{proof}

\begin{lemma} \label{l-5.2} If $ x\in M$ then
$\grad f_{23} (x) = 0$ if and only if
$$\mu_2(x)_x = \mu_3(x)_x = 0 . $$
\end{lemma}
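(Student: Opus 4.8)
The plan is to prove the two implications separately, with the reverse direction being essentially trivial and the forward direction requiring the orthogonality computation established just before Lemma \ref{l:wilkin} together with the conclusion of Lemma \ref{l:wilkin}. First, if $\mu_2(x)_x = \mu_3(x)_x = 0$, then by the formula \eqref{f23} we have $\grad f_{23}(x) = 2\jj(\mu_2(x)_x - \ii\mu_3(x)_x) = 0$ immediately, since both terms vanish. So the content is entirely in the forward direction.

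For the forward direction, suppose $\grad f_{23}(x) = 0$, and write $\beta_2 = \mu_2(x)$, $\beta_3 = \mu_3(x)$ (viewed as elements of $\mathfrak{k}$ via the invariant inner product). By \eqref{f23} we have $(\beta_3)_x + \ii(\beta_2)_x = 0$, equivalently $(\beta_3)_x = -\ii(\beta_2)_x$. I would now pair this relation against $(\beta_2)_x$ and against $(\beta_3)_x$ using the metric $g$, and exploit the fact that $g$ is compatible with the complex structures, so that $g(\ii v, w) = \omega_1(v,w) \cdot(\text{sign})$... more precisely $g(\ii v, \ii w) = g(v,w)$ and $\ii$ is skew with respect to $g$. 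From $(\beta_3)_x = -\ii(\beta_2)_x$ we get $\|(\beta_3)_x\|^2 = g((\beta_3)_x, -\ii(\beta_2)_x) = -g(\ii^{-1}(\beta_3)_x,(\beta_2)_x)$ and also $g((\beta_3)_x,(\beta_2)_x) = g(-\ii(\beta_2)_x,(\beta_2)_x) = 0$ by skewness of $\ii$; so $(\beta_2)_x \perp (\beta_3)_x$. The key extra input is Lemma \ref{l:wilkin}, which gives $[\beta_2,\beta_3] = 0$: this means $\beta_3 \in \mathfrak{k}$ is fixed by the adjoint action of $\beta_2$, hence the vector field $(\beta_3)_x$ and the flows generated by $\beta_2$ commute, and in particular — using $K$-equivariance of $\mu_2$ — the function $t \mapsto \mu_2(\exp(t\beta_3)x)$ is constant, so $\langle d\mu_2((\beta_3)_x), \beta_2\rangle = \omega_2((\beta_2)_x, (\beta_3)_x) = 0$. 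Now $\omega_2((\beta_2)_x,(\beta_3)_x) = g((\beta_2)_x, \jj(\beta_3)_x)$, and combining with $(\beta_3)_x = -\ii(\beta_2)_x$ gives $g((\beta_2)_x, \jj(-\ii(\beta_2)_x)) = g((\beta_2)_x, -\jj\ii(\beta_2)_x) = g((\beta_2)_x, \kk(\beta_2)_x) = 0$, which is automatic by skewness and gives nothing. So I would instead pair the moment-map relation differently: apply $g(\,\cdot\,, \jj(\beta_2)_x)$ or use $\omega_3$. The cleanest route: from $(\beta_3)_x + \ii(\beta_2)_x = 0$, hit it with $d\mu_1$. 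Since $\mu_1(x)$ need not be zero this is not immediate, but we can instead use that $\langle d\mu_1((\beta_2)_x),\xi\rangle = \omega_1(\xi_x,(\beta_2)_x) = g(\ii\xi_x,(\beta_2)_x)$ and similarly relate the three moment maps.

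Let me state the decisive computation I expect to use: apply the derivative of the \emph{real} moment map pairing, namely compute $\|\mu_2(x)_x\|^2 = g((\beta_2)_x,(\beta_2)_x)$ and rewrite $(\beta_2)_x$ via the relation. We have $(\beta_2)_x = \ii^{-1}(-(\beta_3)_x) = \ii(\beta_3)_x$ (since $\ii^{-1} = -\ii$), so $(\beta_2)_x = \ii(\beta_3)_x$ and symmetrically $(\beta_3)_x = -\ii(\beta_2)_x$. Then $\|(\beta_2)_x\|^2 = g(\ii(\beta_3)_x, \ii(\beta_3)_x) = \|(\beta_3)_x\|^2 =: c$. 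Now I use the moment map identity for $\mu_3$: $\langle d\mu_3((\beta_2)_x), \beta_3\rangle = \omega_3((\beta_3)_x, (\beta_2)_x) = g(\kk(\beta_3)_x, (\beta_2)_x) = g(\kk(\beta_3)_x, \ii(\beta_3)_x) = g(\kk\ii(\beta_3)_x,(\beta_3)_x)\cdot(-1)$; since $\kk\ii = \jj$, this is $-g(\jj(\beta_3)_x,(\beta_3)_x) = 0$ by skewness. On the other hand $\langle d\mu_3((\beta_2)_x),\beta_3\rangle$ equals $-\langle d\mu_3((\beta_3)_x)\cdot$... I would instead note $\langle d\mu_3((\beta_2)_x),\beta_3\rangle = -\langle \mathrm{ad}^*_{\beta_2}\mu_3(x),\beta_3\rangle = -\langle \mu_3(x),[\beta_2,\beta_3]\rangle = 0$ by Lemma \ref{l:wilkin} and $K$-equivariance — no information. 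The genuine input must be that $\mu_\cc$ is \emph{holomorphic}: from $(\beta_3)_x = -\ii(\beta_2)_x$ and holomorphicity, $d\mu_\cc(\ii v) = \ii\, d\mu_\cc(v)$, and applying $d\mu_\cc$ to $(\beta_2)_x + \ii(\beta_3)_x = (\beta_2)_x - (\beta_2)_x = 0$... wait, $(\beta_2)_x + \ii(\beta_3)_x = (\beta_2)_x + \ii(-\ii(\beta_2)_x) = (\beta_2)_x + (\beta_2)_x = 2(\beta_2)_x$. So $d\mu_\cc(2(\beta_2)_x) = 0$, i.e. $[\beta_2, \mu_\cc(x)] = 0$ and $[\beta_2, \ii\text{-direction}]$... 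Concretely $\langle d\mu_\cc((\beta_2)_x), \eta\rangle = \omega_\cc(\eta_x, (\beta_2)_x)$, and this being zero for all $\eta$ forces $(\beta_2)_x$ to be $\omega_\cc$-orthogonal to all $\eta_x$, in particular to $\ii(\beta_2)_x = -(\beta_3)_x$, giving $\omega_\cc((\beta_3)_x,(\beta_2)_x) = 0$; taking real and imaginary parts yields $\omega_2((\beta_3)_x,(\beta_2)_x) = \omega_3((\beta_3)_x,(\beta_2)_x) = 0$. Now $\omega_2((\beta_3)_x,(\beta_2)_x) = g(\jj(\beta_3)_x,(\beta_2)_x) = g(\jj(\beta_3)_x, \ii(\beta_3)_x) = -g(\ii\jj(\beta_3)_x,(\beta_3)_x) = -g(\kk(\beta_3)_x,(\beta_3)_x) = 0$: still automatic.

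The honest resolution, which I would pursue, is the following: pair the relation $(\beta_3)_x = -\ii(\beta_2)_x$ with $\jj(\beta_2)_x$ using $g$, getting $g((\beta_3)_x, \jj(\beta_2)_x) = -g(\ii(\beta_2)_x,\jj(\beta_2)_x) = -g(\kk(\beta_2)_x \cdot$... we have $\ii(\beta_2)_x$ and $\jj(\beta_2)_x$ are $g$-orthogonal (distinct complex structures, shown in the orthogonality paragraph preceding Lemma \ref{l:wilkin}), so $= 0$; but also $g((\beta_3)_x,\jj(\beta_2)_x) = \omega_2((\beta_2)_x \cdot$... $= -\omega_2((\beta_2)_x,(\beta_3)_x) \cdot$ hmm. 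I expect the clean argument is: the four families $\alpha_x, \ii\beta_x, \jj\gamma_x, \kk\delta_x$ are mutually orthogonal (stated before Lemma \ref{l:wilkin}); since $\grad f_{23}(x) = 2\jj(\beta_2)_x - 2\jj\ii(\beta_2)_x \cdot$... more to the point, $\grad f_{23}(x) = 2\jj(\beta_2)_x + 2\kk(\beta_3)_x$ rewriting \eqref{f23}, wait: $2\jj((\beta_2)_x - \ii(\beta_3)_x) = 2\jj(\beta_2)_x - 2\jj\ii(\beta_3)_x = 2\jj(\beta_2)_x - 2\kk(\beta_3)_x$. Setting this to zero: $\jj(\beta_2)_x = \kk(\beta_3)_x$. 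But $\jj(\beta_2)_x$ and $\kk(\beta_3)_x$ live in mutually orthogonal subspaces ($\jj V_x$ and $\kk V_x$ are orthogonal), hence each is zero: $\jj(\beta_2)_x = 0$ and $\kk(\beta_3)_x = 0$, so $(\beta_2)_x = 0$ and $(\beta_3)_x = 0$. That is the proof. The main obstacle, and the point to get right, is justifying that $\jj V_x \perp \kk V_x$ where $V_x = T_x(Kx)$ — but this is exactly the orthogonality relation $g(\jj\gamma_x, \kk\delta_x) = 0$ established in the paragraph immediately preceding Lemma \ref{l:wilkin}, valid because $x \in M$ with no assumption needed beyond $\mu_1$ being a moment map (the displayed computation there used $\mu_1(x) = 0$, so for general $x$ I would redo it using $g(\jj\gamma_x,\kk\delta_x) = g(\ii\gamma_x,\delta_x) = \langle d\mu_1(\delta_x),\gamma\rangle = \langle \mathrm{ad}^*_\delta\mu_1(x),\gamma\rangle = \langle \mu_1(x),[\delta,\gamma]\rangle$, and then invoke Lemma \ref{l:wilkin}-style reasoning or note $\beta_2,\beta_3$ need not be in general position — so instead I would specifically take $\gamma = \beta_2$, $\delta = \beta_3$ and use $[\beta_2,\beta_3]=0$ from Lemma \ref{l:wilkin} to kill this term). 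So concretely: $g(\jj(\beta_2)_x, \kk(\beta_3)_x) = g(\ii(\beta_2)_x,(\beta_3)_x) = \langle d\mu_1((\beta_3)_x),\beta_2\rangle = \langle \mu_1(x),[\beta_3,\beta_2]\rangle = 0$ by Lemma \ref{l:wilkin}, and similarly $g(\jj(\beta_2)_x,\jj(\beta_2)_x) = g(\kk(\beta_3)_x,\kk(\beta_3)_x)$ from $\jj(\beta_2)_x = \kk(\beta_3)_x$, so $\|(\beta_2)_x\|^2 = \|(\beta_3)_x\|^2$; and then $\|(\beta_2)_x\|^2 = g((\beta_2)_x,(\beta_2)_x) = -g(\ii(\beta_2)_x,\ii(\beta_2)_x)\cdot$... rather $\|\jj(\beta_2)_x\|^2 = g(\jj(\beta_2)_x,\kk(\beta_3)_x) = 0$, giving $(\beta_2)_x = 0$, whence $\kk(\beta_3)_x = \jj(\beta_2)_x = 0$ so $(\beta_3)_x = 0$. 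This completes the proof, and I expect the only subtlety worth spelling out is this last orthogonality-forces-vanishing step and its reliance on Lemma \ref{l:wilkin}.
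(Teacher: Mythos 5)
Your final argument is correct and essentially the same as the paper's: both use Lemma \ref{l:wilkin} to get $[\beta_2,\beta_3]=0$, then the equivariance of $\mu_1$ to show $g(\ii(\beta_2)_x,(\beta_3)_x)=\pm\,\mu_1(x)\cdot[\beta_2,\beta_3]=0$, and combine this with the relation $(\beta_2)_x=\ii(\beta_3)_x$ forced by $\grad f_{23}(x)=0$ via \eqref{f23} to conclude that both vector fields vanish. The sign slip in your expansion (since $\jj\ii=-\kk$, the gradient is $2\jj(\beta_2)_x+2\kk(\beta_3)_x$, not $2\jj(\beta_2)_x-2\kk(\beta_3)_x$) is immaterial, because the vanishing inner product kills both vectors either way.
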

\begin{proof}
Suppose $\grad f_{23}(x) = 0 $. 
 By Lemma \ref{l:wilkin} we have $[\beta_2, \beta_3] = 0 $,
so
$$ \mu_2(x)_x \cdot \ii \mu_3(x)_x = d\mu_1 (\mu_2(x)_x) \cdot \mu_3(x) $$
$$ = [\mu_2(x), \mu_1(x) ] \cdot \mu_3(x) $$
$$ = \mu_1(x) \cdot [\beta_2, \beta_3] = 0. $$
Then by (\ref{f23}) the result follows.
\end{proof}

Suppose that $\grad f_{23}(x) = 0$. Then by \eqref{f23}, the
Hessian $H_{23}$ of $f_{23}$ at $x$ is represented via the metric
as the self-adjoint endomorphism of $T_x M$ given by
\begin{equation} \label{e:5.1}
\frac{1}{2} H_{23} (\xi) = (\jj \beta_2 + \kk \beta_3) (\xi) + \jj
d\mu_2 (x) (\xi)_x + \kk d\mu_3(x) (\xi)_x
\end{equation}
for $\xi \in T_x M$. Here $\beta_2 = \mu_2(x)$ and $\beta_3 =
\mu_3(x)$ and $(\jj \beta_2 + \kk \beta_3)(\xi)$ denotes the
action of $\jj \beta_2 + \kk \beta_3$ on $T_x M$ induced from the
action of $K$ and the fact that $(\jj \beta_2 + \kk \beta_3)_x  =
0 $.

As before let $T$ be a maximal torus of $K$ with Lie algebra $\liet$ and for $j=2,3$ let
$\mathcal{B}_j$ be the set of $\beta \in \liets$ such that there is some $x \in X$
with $\mu_j(x)=\beta$ and $\beta_x=0$.

\begin{remit} \emph{When $M$ is compact then any $\beta \in \mathcal{B}_j$ is the closest
point to zero of the convex hull of a nonempty subset of the finite subset of $\liets$
which is the image under the $T$-moment map $\mu_j^T$ of the $T$-fixed point set in $M$
(see \cite{Kir} Lemma 3.13), and in particular $\mathcal{B}_j$ is finite.
In general it follows from the fact that $|\!|\mu_j|\!|^2$ is a minimally degenerate Morse
function in the sense of \cite{Kir} that $\mathcal{B}_j$ is at most countable.}
\end{remit}
 Let $\mathcal{B}_{23}$ be a set of
representatives of the Weyl group orbits in
$\mathcal{B}_2 \times \mathcal{B}_3$.
For each $(\beta_2, \beta_3) \in {\mathcal B}_{23}$
let
$$Stab(\beta_2, \beta_3) = Stab(\beta_2) \cap Stab (\beta_3) $$
where $Stab(\beta)$ denotes the stabilizer of $\beta$ under the
coadjoint action of $K$ on $\mathfrak{k}^* \cong \mathfrak{k}$.
Let
$T_{\beta_2 \beta_3} $ be the subtorus of $T$ generated by
$\beta_2$ and $\beta_3$, and let
$$ Z_{\beta_2,\beta_3} = \{ x \in M \mid
T_{\beta_2,\beta_3} \mbox{ fixes $x$ and } \mu_l(x) \cdot \beta_l =
\|\beta_l\|^2. ~~ l = 2, 3 \} $$

The argument of \cite{Kir} Lemma 3.15 shows the
following.
\begin{lemma}
The set of critical points for $f_{23}$ is the disjoint union of the
closed subsets $\{ C_{\beta_2, \beta_3}  \mid (\beta_2, \beta_3) \in
{\mathcal B}_{23} \} $ of $M$, where
$$C_{\beta_2, \beta_3} =
K (Z_{\beta_2, \beta_3} \cap {\mu_2}^{-1}(\beta_2) \cap
{\mu_3}^{-1}(\beta_3)).  $$
\end{lemma}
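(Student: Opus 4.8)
The plan is to follow the strategy of \cite[Lemma 3.15]{Kir}, adapting the description of the critical set of a single $\|\mu_j\|^2$ to the joint function $f_{23}$. First I would record what Lemma \ref{l-5.2} already gives us: $x$ is a critical point of $f_{23}$ precisely when $\mu_2(x)_x = \mu_3(x)_x = 0$, i.e. when the vector fields generated by $\beta_2 := \mu_2(x)$ and $\beta_3 := \mu_3(x)$ both vanish at $x$. In particular $\beta_2 \in \mathcal{B}_2'$ and $\beta_3 \in \mathcal{B}_3'$, where $\mathcal{B}_j'$ denotes the analogue of $\mathcal{B}_j$ for all of $K$ rather than $T$ (so $\mathcal{B}_j$ is the set of $T$-representatives of Weyl orbits in $\mathcal{B}_j'$). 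The pair $(\beta_2,\beta_3)$ is the obstruction data attached to the critical point, and it is well-defined up to the simultaneous coadjoint action of $K$; using $K$-equivariance of $\mu$ we may move $x$ so that $(\beta_2,\beta_3)$ lies in our chosen set of representatives $\mathcal{B}_{23}$.

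Next I would verify the two inclusions. For one direction: if $x$ is critical with $\mu_2(x)=\beta_2$, $\mu_3(x)=\beta_3$ (after the $K$-translation putting $(\beta_2,\beta_3) \in \mathcal{B}_{23}$), then $(\beta_2)_x = (\beta_3)_x = 0$ means $x$ is fixed by the one-parameter subgroups generated by $\beta_2$ and $\beta_3$, hence by the subtorus $T_{\beta_2,\beta_3}$ they generate (this uses that the fixed-point set of a torus is the intersection of the fixed-point sets of a generating set, or equivalently continuity of the action). The conditions $\mu_l(x)\cdot\beta_l = \|\beta_l\|^2$ for $l=2,3$ hold trivially since $\mu_l(x)=\beta_l$. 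Therefore $x \in Z_{\beta_2,\beta_3} \cap \mu_2^{-1}(\beta_2) \cap \mu_3^{-1}(\beta_3)$, so $x \in C_{\beta_2,\beta_3}$. For the reverse direction: suppose $x \in Z_{\beta_2,\beta_3} \cap \mu_2^{-1}(\beta_2)\cap \mu_3^{-1}(\beta_3)$. Then $T_{\beta_2,\beta_3}$ fixes $x$, so $(\beta_2)_x = (\beta_3)_x = 0$; combined with $\mu_2(x)=\beta_2$ and $\mu_3(x)=\beta_3$ this gives $\mu_2(x)_x = \mu_3(x)_x = 0$, so by Lemma \ref{l-5.2} (applied through \eqref{f23}) we get $\grad f_{23}(x)=0$. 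Since the critical set is $K$-invariant, the whole $K$-orbit $C_{\beta_2,\beta_3} = K(Z_{\beta_2,\beta_3}\cap\mu_2^{-1}(\beta_2)\cap\mu_3^{-1}(\beta_3))$ consists of critical points. Each $C_{\beta_2,\beta_3}$ is closed because $Z_{\beta_2,\beta_3}$, the fibres of $\mu_2,\mu_3$, and the $K$-saturation of a closed set (using that orbits of the compact group $K$ are compact, combined with properness/flow-closedness arguments as in \cite{Kir}) are closed.

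Then I would argue disjointness. If $x \in C_{\beta_2,\beta_3} \cap C_{\beta_2',\beta_3'}$, write $x = k \cdot y = k' \cdot y'$ with $y,y'$ in the respective slices. Then $\mu_2(y) = \beta_2$, $\mu_3(y)=\beta_3$ forces, by $K$-equivariance, $\mu_2(x) = Ad^*_k \beta_2$ and likewise, so $(Ad^*_k\beta_2, Ad^*_k\beta_3)$ and $(Ad^*_{k'}\beta_2', Ad^*_{k'}\beta_3')$ coincide, meaning $(\beta_2,\beta_3)$ and $(\beta_2',\beta_3')$ lie in the same $K$-orbit in $\liets \times \liets$; restricting to $T$ and using the Weyl group this says they lie in the same Weyl-orbit in $\mathcal{B}_2\times\mathcal{B}_3$, hence are equal as elements of the chosen representative set $\mathcal{B}_{23}$. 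I expect the main obstacle to be the non-compactness bookkeeping: ensuring that $\mathcal{B}_2, \mathcal{B}_3$ (and hence $\mathcal{B}_{23}$) are genuinely the right index sets — that every critical value $(\mu_2(x),\mu_3(x))$ at a critical point really does arise from the minimally-degenerate Morse theory of the individual $\|\mu_j\|^2$ — and that the "closed" claim survives without compactness of $M$; here one leans on the flow-closedness hypothesis and on the cited \cite{Kir} Lemma 3.15 machinery, which is exactly the point at which the argument of \cite{Kir} is being invoked verbatim. The purely algebraic parts (the torus-fixed-point characterisation, the Weyl-orbit counting) are routine.
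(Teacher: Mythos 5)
There is a genuine gap at the crucial step, and it is exactly the point for which the paper needs Lemma \ref{l:wilkin}. You write that ``using $K$-equivariance of $\mu$ we may move $x$ so that $(\beta_2,\beta_3)$ lies in our chosen set of representatives $\mathcal{B}_{23}$.'' But $\mathcal{B}_{23}$ indexes Weyl orbits in $\mathcal{B}_2\times\mathcal{B}_3$, and by the paper's definition $\mathcal{B}_j\subset\liets$; so to place the pair $(\mu_2(x),\mu_3(x))$ in $\mathcal{B}_2\times\mathcal{B}_3$ you need a \emph{single} element $k\in K$ whose coadjoint action carries both $\mu_2(x)$ and $\mu_3(x)$ into $\liets$ simultaneously. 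An arbitrary pair of elements of $\mathfrak{k}$ cannot be simultaneously conjugated into a fixed maximal torus; this is possible precisely when the two elements commute. The paper supplies this via Lemma \ref{l:wilkin}: at a critical point of $f_{23}$ one has $[\mu_2(x),\mu_3(x)]=0$, a fact whose proof is not formal (it uses the gradient formula \eqref{f23} together with holomorphicity and $K$-equivariance of $\mu_\cc$). Your argument never invokes or reproves this commutativity, so the passage from ``$x$ is critical'' to ``$x\in C_{\beta_2,\beta_3}$ for some $(\beta_2,\beta_3)\in\mathcal{B}_{23}$'' does not go through as written. (The same commutativity is also what makes the definition of the subtorus $T_{\beta_2,\beta_3}\subset T$ meaningful for the relevant pairs.) Once $[\beta_2,\beta_3]=0$ is in hand, the rest of your argument — conjugating into $\liet$, adjusting by the normalizer of $T$ to reach $\mathcal{B}_{23}$, the reverse inclusion via Lemma \ref{l-5.2}, and the disjointness via Weyl-orbit representatives — matches the paper's proof.

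Two smaller points. First, your closing worry about whether $\mathcal{B}_2,\mathcal{B}_3$ are ``the right index sets'' is misplaced in this paper: $\mathcal{B}_j$ is defined directly as the set of $\beta\in\liets$ for which some $x$ satisfies $\mu_j(x)=\beta$ and $\beta_x=0$, so membership is immediate once the simultaneous conjugation into $\liet$ has been achieved; no appeal to the Morse theory of the individual $\|\mu_j\|^2$ is needed here. Second, closedness of $C_{\beta_2,\beta_3}$ needs no properness or flow-closedness: $Z_{\beta_2,\beta_3}\cap\mu_2^{-1}(\beta_2)\cap\mu_3^{-1}(\beta_3)$ is closed and the saturation of a closed set under the action of the compact group $K$ is closed, by a standard subsequence argument.
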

\begin{proof}
Suppose $\grad f_{23}(x) = 0 $. 
 By Lemma \ref{l:wilkin} and Lemma \ref{l-5.2}
 we have $$[\mu_2(x), \mu_3(x)] = 0 $$ and
$ \mu_2(x)_x =0 = \mu_3(x)_x $. Hence there exists $k \in K$ such
that $\mu_2(kx)$ and $\mu_3(kx)$ both lie in $\liet$ and
$$ \mu_2(kx)_{kx} =0 = \mu_3(kx)_{kx}. $$
Thus we have $ \mu_2(kx) =\beta_2$ and $ \mu_3(kx)= \beta_3 $ for some
$(\beta_2,\beta_3) \in \mathcal{B}_2 \times \mathcal{B}_3$, and multiplying
$k$ by a suitable element of the normalizer of $T$ in $K$ we may assume that
$(\beta_2,\beta_3) \in \mathcal{B}_{23}$. The result follows.
\end{proof}

To show that $f_{23}$ is a minimally degenerate Morse function in
the sense of  \S11 of \cite{Kir}, it is enough to find for each
$(\beta_2, \beta_3) \in {\mathcal B}_{23}$ a locally closed
submanifold $R_{\beta_2, \beta_3}$ of $M$ which contains
$C_{\beta_2, \beta_3}$ and has the following properties.
$R_{\beta_2, \beta_3}$ must be closed in a neighbourhood of
$C_{\beta_2, \beta_3}$. Moreover there should be a smooth
subbundle $U$ of $TM|_{R_{\beta_2, \beta_3}}$ such that
$TM|_{R_{\beta_2, \beta_3}}= U + TR_{\beta_2, \beta_3}$ and for
each $x \in C_{\beta_2, \beta_3}$ the restriction to $U_x$ of the
Hessian $H_{23}$ of $f_{23}$ at $x$ is nondegenerate. Then the
normal bundle to $ R_{\beta_2, \beta_3}$ in $M$ is isomorphic to a
quotient of $U$ and $U$ splits near $C_{\beta_2, \beta_3}$ as $U^+
+ U^- $ where the restriction of the Hessian $H_{23}$ to $U_x^+$
is positive definite and to $U_x^-$ is negative definite for all
$x$. One finds that the intersection of a small neighborhood of
$C_{\beta_2, \beta_3}$ in $M$ with the image of $U^+$ under the
exponential map $Exp: TM \to M$ satisfies the conditions for a
minimizing manifold for $f_{23}$ along $C_{\beta_2, \beta_3}$. By
\cite{Kir} (4.21) for $l = 2, 3$ we have
$$ K (Z_{\beta_l} \cap \mu_l^{-1} (\beta_l) ) \cong
 K \times_{Stab \beta_l} (Z_{\beta_l} \cap \mu_l^{-1} (\beta_l)  )
 $$
and $K Z_{\beta_l} \cong K \times_{Stab \beta_l} Z_{\beta_l} $
near $K(Z_{\beta_l} \cap \mu_l^{-1}(\beta_l)). $ Therefore
\begin{equation} \label{e:5.4}
C_{\beta_2,\beta_3} = K \times_{Stab(\beta_2,\beta_3)}
(Z_{\beta_2,\beta_3} \cap  \mu_3^{-1}(\beta_3))
\end{equation}
and
$K Z_{\beta_2, \beta_3} = K \times_{Stab(\beta_2, \beta_3)}
Z_{\beta_2, \beta_3} $
near $C_{\beta_2, \beta_3}$. In particular
 $K Z_{\beta_2, \beta_3}$ is smooth near
$C_{\beta_2,\beta_3}$
(cf. \cite{Kir} Cor 4.11).
If $x \in Z_{\beta_2, \beta_3} $
then $\mu_l(x) \cdot \beta_l = \|\beta_l\|^2 $
for $l = 2, 3 $ so
$f_{23} (x) \ge \|\beta_2\|^2 + \|\beta_3\|^2 $
with equality if and only if $\mu_2(x) = \beta_2$ and
$\mu_3(x) = \beta_3.$
Hence the restriction of $f_{23}$ to $KZ_{\beta_2,\beta_3}$
takes its minimum value precisely
on $C_{\beta_2,\beta_3}$. Thus to prove that
$f_{23}$ is minimally degenerate it suffices to prove

\begin{lemma} \label{l5.5}
For any $x \in Z_{\beta_2,\beta_3} \cap \mu_2^{-1}(\beta_2)
\cap \mu_3^{-1}(\beta_3) $ there is a complement $U_x$
to $T_x K Z_{\beta_2,\beta_3}$ in $T_x M$  varying smoothly
with $x$ such that $H_{23}$ restricts to a nondegenerate
bilinear form on $U_x$ and $U_{sx} = s (U_x) $ for all
$s \in Stab (\beta_2, \beta_3). $
\end{lemma}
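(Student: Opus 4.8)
The plan is to build the complement $U_x$ explicitly using the quaternionic structure, mimicking the construction in \cite{Kir} \S4--\S5 for a single moment map but now carrying two. First I would identify, for $x\in Z_{\beta_2,\beta_3}\cap\mu_2^{-1}(\beta_2)\cap\mu_3^{-1}(\beta_3)$, the tangent space $T_xKZ_{\beta_2,\beta_3}$: since $KZ_{\beta_2,\beta_3}\cong K\times_{Stab(\beta_2,\beta_3)}Z_{\beta_2,\beta_3}$ near $C_{\beta_2,\beta_3}$, this tangent space is spanned by the $K$-orbit directions $\alpha_x$ for $\alpha\in\mathfrak{k}$ together with $T_xZ_{\beta_2,\beta_3}$, where $Z_{\beta_2,\beta_3}$ is cut out inside the $T_{\beta_2\beta_3}$-fixed locus by the linear conditions $\mu_l(x)\cdot\beta_l=\|\beta_l\|^2$. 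I would then decompose $T_xM\cong\hh^n$ using the metric-orthogonal splitting established just before Theorem \ref{theorem3.2}: the orbit directions $\alpha_x$, $\ii\beta_x$, $\jj\gamma_x$, $\kk\delta_x$ are mutually orthogonal, and one has the quaternionic complement $\mathcal{W}$. The candidate for $U_x$ should be the span of the $\jj$- and $\kk$-rotated orbit directions (which by \eqref{e:5.1} is where the "action" term $(\jj\beta_2+\kk\beta_3)(\xi)$ of the Hessian lives) together with an appropriate piece of $\mathcal{W}$ coming from the transverse Hessian of $\|\mu_l\|^2$ in each factor, as in \cite{Kir}.

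Second, I would compute the Hessian $H_{23}$ on this proposed $U_x$ using formula \eqref{e:5.1}. On the subspace spanned by $\jj\alpha_x$ and $\kk\alpha_x$ for $\alpha$ ranging over a complement to $\mathrm{Lie}(Stab(\beta_2,\beta_3))$ in $\mathfrak{k}$, the operator $\jj\beta_2+\kk\beta_3$ acts, and one computes its matrix in the basis $\{\jj\alpha_x,\kk\alpha_x\}$; I expect (after the sign correction the authors advertise) a block of the form $\bigl(\begin{smallmatrix}0 & -\langle\alpha,[\beta_3,\cdot]\rangle\\ \langle\alpha,[\beta_2,\cdot]\rangle & 0\end{smallmatrix}\bigr)$-type expressions, pairing $\jj\alpha_x$ with $\kk(\text{something})_x$, whose nondegeneracy reduces to the statement that $\beta_2$ and $\beta_3$ do not both annihilate $\alpha$ — which holds precisely because $\alpha\notin\mathrm{Lie}(Stab(\beta_2)\cap Stab(\beta_3))$. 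The remaining part of $H_{23}$, the terms $\jj\,d\mu_2(x)(\xi)_x+\kk\,d\mu_3(x)(\xi)_x$, I would handle by restricting to $\mathcal{W}$ (the normal directions to the orbit-and-fixed-locus data), where it reduces to the ordinary transverse Hessians of $\|\mu_2\|^2$ and $\|\mu_3\|^2$ that are already known to be nondegenerate on the appropriate complement by \cite{Kir} \S4 applied to each $\mu_l$ separately; the cross terms vanish by the orthogonality and the fact that $\mu_2,\mu_3$ commute on $Z_{\beta_2,\beta_3}$ (Lemma \ref{l:wilkin}).

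Third, the equivariance $U_{sx}=s(U_x)$ for $s\in Stab(\beta_2,\beta_3)$ is automatic from the construction provided every ingredient — the orbit directions, the quaternionic structures $\jj,\kk$ (which are $K$-invariant), the fixed locus $Z_{\beta_2,\beta_3}$, and the choice of complement to $\mathrm{Lie}(Stab(\beta_2,\beta_3))$ in $\mathfrak{k}$ — is chosen $Stab(\beta_2,\beta_3)$-equivariantly; for the last item one uses an invariant inner product on $\mathfrak{k}$, which is already fixed. Smoothness in $x$ follows since all the bundles involved ($K$-orbit directions, $\jj$- and $\kk$-images thereof, the relevant subbundle of $\mathcal{W}$) are smooth subbundles of $TM$ restricted to the smooth manifold $Z_{\beta_2,\beta_3}\cap\mu_2^{-1}(\beta_2)\cap\mu_3^{-1}(\beta_3)$.

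The main obstacle I anticipate is the explicit Hessian computation on the "action" block: one must carefully track how $\jj\beta_2+\kk\beta_3$ maps the span of $\{\jj\alpha_x:\alpha\}$ into the span of $\{\kk\alpha_x:\alpha\}$ and vice versa, keeping the signs straight (this is exactly where the authors flag that \cite{k6} had an error), and then argue that the resulting $2\times2$-block-structured symmetric form is nondegenerate. The key input making this work is that $\beta_2,\beta_3$ span $T_{\beta_2\beta_3}$ so that the pairing $\alpha\mapsto(\langle[\alpha,\beta_2],-\rangle,\langle[\alpha,\beta_3],-\rangle)$ is injective on a complement to $\mathrm{Lie}(Stab(\beta_2,\beta_3))$; everything else is routine once the local model of Theorem \ref{theorem3.2} and the orthogonality relations are in hand. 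A secondary subtlety is matching the $\jj V_x^\cc\oplus\mathcal{W}$ decomposition with the Hessian formula so that the positive/negative definite splitting $U=U^+\oplus U^-$ near $C_{\beta_2,\beta_3}$ is visibly $Stab(\beta_2,\beta_3)$-equivariant, but this is forced once $U$ itself is equivariant and $H_{23}$ is $K$-invariant.
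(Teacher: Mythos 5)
Your outline follows the right general shape (split $T_xM$ into orbit-type directions and transverse directions, check equivariance at the end), but both of your key nondegeneracy steps have genuine gaps, and your candidate $U_x$ is not in general a complement. First, the splitting $T_x(Gx)^\perp=\jj V_x^\cc\oplus\mathcal{W}$ that you import from \S3 is established only at points of $\mu^{-1}(0)$: the mutual orthogonality of $\mathfrak{k}_x,\ii\mathfrak{k}_x,\jj\mathfrak{k}_x,\kk\mathfrak{k}_x$ there uses $\mu_1(x)=0$, whereas at the critical points of $f_{23}$ relevant here one has $\mu_1(x)=\beta_1\neq 0$ in general and $\beta_2,\beta_3\neq 0$. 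More seriously, your proposed $U_x$ (the $\jj$- and $\kk$-rotated orbit directions plus part of $\mathcal{W}$) omits the $\ii$-rotated orbit directions $\ii a_x$ for $a$ outside $stab(\beta_2,\beta_3)$, which lie neither in $T_xKZ_{\beta_2,\beta_3}$ nor in your span, so $U_x+T_xKZ_{\beta_2,\beta_3}\neq T_xM$ in general. And the ``action block'' you expect is not what $H_{23}$ produces: by \eqref{e:5.1}, $\tfrac12 H_{23}(\jj a_x)$ and $\tfrac12 H_{23}(\kk a_x)$ have components along all four of $\mathfrak{k}_x,\ii\mathfrak{k}_x,\jj\mathfrak{k}_x,\kk\mathfrak{k}_x$; the terms $[\beta_2,a]_x,[\beta_3,a]_x$ land in the $\mathfrak{k}_x$ and $\ii\mathfrak{k}_x$ slots, while the operators $A_x$ and $\mathrm{Ad}\,\beta_1$ (with $\beta_1=\mu_1(x)$ generally nonzero) enter the $\jj,\kk$ slots. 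So nondegeneracy does not reduce to injectivity of $\alpha\mapsto([\alpha,\beta_2],[\alpha,\beta_3])$. The paper has to analyse the full $4\times 4$ block matrix $\mathcal{M}$ of \eqref{e:sign}: using the adjugate of $\mathcal{M}'$ it shows $\phi^{-1}(L)\subset L$ for $L=\mathfrak{k}\oplus stab(\beta_2,\beta_3)^{\oplus 3}$, hence that the induced map on $\mathfrak{k}^{\oplus 4}/L$ is an isomorphism, and only then takes $W_x$ to be the orthogonal complement of $T_xKZ_{\beta_2,\beta_3}$ inside $T_xZ_{\beta_2,\beta_3}+\mathfrak{k}_x+\ii\mathfrak{k}_x+\jj\mathfrak{k}_x+\kk\mathfrak{k}_x$. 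Nothing in your sketch substitutes for this kernel analysis, and the crucial input $\ker A_x\subset\ker\mathrm{Ad}\,\beta_2\cap\ker\mathrm{Ad}\,\beta_3$ never appears.

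Second, on the transverse piece you have the Hessian backwards: on the orthogonal complement $V_x$ of the subspace above, the terms $\jj\,d\mu_2(x)(\xi)_x+\kk\,d\mu_3(x)(\xi)_x$ \emph{vanish} (since $d\mu_2(x)(\xi)\cdot b=\xi\cdot\jj b_x=0$ there), and what survives is the Hessian of $\mu_2^{(\beta_2)}+\mu_3^{(\beta_3)}$ --- components of the moment maps paired with $\beta_2,\beta_3$ --- not ``transverse Hessians of $\|\mu_2\|^2$ and $\|\mu_3\|^2$''. The genuine difficulty there, which your ``cross terms vanish'' remark does not address, is why the \emph{sum} of two Morse--Bott Hessians, each nondegenerate transverse to its own critical manifold, is nondegenerate transverse to the fixed-point set of $T_{\beta_2\beta_3}$: a priori one could have $H_2\xi=-H_3\xi\neq 0$. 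This is exactly what Lemma \ref{l3.9} supplies, via the hyperk\"ahler identity $\grad\mu_2^{(\beta_2)}(y)\cdot\grad\mu_3^{(\beta_3)}(y)=0$ for all $y$, which upon differentiating twice gives $H_2H_3+H_3H_2=0$, hence $H^2=H_2^2+H_3^2$ and $\ker H=\ker H_2\cap\ker H_3$. Your equivariance and smoothness observations are fine, but without a genuine complement and without these two nondegeneracy arguments the proof does not go through.
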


\begin{proof}
~From (\ref{e:5.1}) it is easy to check that the subspace
$$T_x Z_{\beta_2,\beta_3} + \mathfrak{k}_x + \ii \mathfrak{k}_x + \jj
\mathfrak{k}_x + \kk \mathfrak{k}_x $$ of $T_x M$ is invariant under
the action of $H_{23}$ regarded as a self-adjoint endomorphism of
$T_x M$. Hence so is its orthogonal complement $V_x $ say, in $T_x
M$. If $\xi \in V_x$ then
$$ d \mu_2(x)(\xi) \cdot b = \xi \cdot \jj b_x = 0 $$
for all $b \in \mathfrak{k}$, so $d \mu_2(x)(\xi) = 0$
and similarly $d \mu_3(x)(\xi) = 0 $. Hence
$$ \frac{1}{2} H_{23} (\xi) = \jj \beta_2(\xi) + \kk \beta_3(\xi)$$
so that the restriction of $\frac{1}{2} H_{23}  $ to $V_x$ coincides
with the restriction of the Hessian of the function
$\mu_2^{(\beta_2)} + \mu_3^{(\beta_3)}$ on $M$, where
$\mu_l^{(\beta_l)}(x) $ is defined as $\mu_l (x) \cdot \beta_l.$ But
$Z_{\beta_2, \beta_3}$ is a union of components of the set of
critical points for this function. Because this function is a
nondegenerate Morse function on $M$ by the following lemma, the
restriction of $H_{23}$ to $V_x$ is nondegenerate.
\end{proof}

\begin{lemma} \label{l3.9} \cite[Lemma 3.9]{k6}
The function $\mu^{(\beta)}:=\mu_2^{(\beta_2)} + \mu_3^{(\beta_3)}$
is a nondegenerate Morse-Bott function.
\end{lemma}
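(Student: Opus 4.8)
The plan is to show that $\mu^{(\beta)} = \mu_2^{(\beta_2)} + \mu_3^{(\beta_3)}$ is Morse--Bott by analyzing its gradient and Hessian using the hyperk\"ahler structure. First I would record the gradient: since $\mu_j^{(\beta_j)}(x) = \langle \mu_j(x), \beta_j\rangle$ is (up to sign) the Hamiltonian function generating the one-parameter subgroup $\exp(t\beta_j)$ with respect to $\omega_j$, we have $\grad \mu_2^{(\beta_2)}(x) = \ii (\beta_2)_x$ and $\grad \mu_3^{(\beta_3)}(x) = \jj (\beta_3)_x$ (or $\kk$, with the appropriate convention matching the paper's frame), so that the critical set of $\mu^{(\beta)}$ is $\{x : \ii(\beta_2)_x + \jj(\beta_3)_x = 0\}$. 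Because $\ii(\beta_2)_x$ and $\jj(\beta_3)_x$ lie in orthogonal summands of $T_xM$ when $\mu_1(x)$ is suitably placed—more to the point, because $(\beta_2)_x$ and $(\beta_3)_x$ together with the complex structures span independent directions whenever either is nonzero—the vanishing forces $(\beta_2)_x = (\beta_3)_x = 0$, i.e. $x$ is fixed by the subtorus $T_{\beta_2\beta_3}$. Thus the critical set is a union of components of $M^{T_{\beta_2\beta_3}}$, which is a smooth submanifold (fixed-point sets of compact group actions on manifolds are smooth). This identifies the critical manifolds.

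Next I would compute the Hessian at a critical point $x$. From the gradient formula, the Hessian of $\mu^{(\beta)}$ at $x \in M^{T_{\beta_2\beta_3}}$ is the self-adjoint endomorphism $\xi \mapsto \ii\,\beta_2(\xi) + \jj\,\beta_3(\xi)$, where $\beta_j(\xi)$ denotes the infinitesimal $K$-action of $\beta_j$ on $T_xM$ (this uses $(\beta_j)_x = 0$, so the action on the tangent space is well-defined). Since $T_xM$ is a quaternionic representation of the torus $T_{\beta_2\beta_3}$, it decomposes into weight spaces; on the weight space with weight $\lambda$ the operator $\beta_j(\cdot)$ acts as multiplication by $\langle\lambda,\beta_j\rangle$ times the appropriate complex structure, and a direct eigenvalue computation on each such summand shows the kernel of the Hessian is exactly the fixed subspace, which is the tangent space to the critical manifold. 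Hence $\mu^{(\beta)}$ is nondegenerate in the Bott sense.

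The main obstacle I anticipate is purely bookkeeping: getting the signs and the precise complex structures ($\ii$ vs.\ $\jj$ vs.\ $\kk$) in the gradient formula correct relative to the paper's conventions (this is exactly the kind of sign issue flagged in the discussion around \eqref{e:sign}), and verifying the orthogonality/independence claim that lets one pass from $\ii(\beta_2)_x + \jj(\beta_3)_x = 0$ to $(\beta_2)_x = (\beta_3)_x = 0$ without assuming $\mu_1(x)=0$. The latter follows by applying $d\mu_\cc$ to the relation (as in the proof of Lemma \ref{l:wilkin}) and using that $\mu_\cc$ is holomorphic, combined with the weight-space decomposition; alternatively one can argue componentwise on the fixed-point set of each subtorus. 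Once these conventions are pinned down, the Morse--Bott property is immediate from the weight-space diagonalization of the Hessian.
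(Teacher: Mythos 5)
The weak point is exactly the step you flagged but did not repair: passing from $\grad\mu^{(\beta)}(x)=\jj(\beta_2)_x+\kk(\beta_3)_x=0$ (equivalently, up to sign, $(\beta_2)_x=\ii(\beta_3)_x$) to $(\beta_2)_x=(\beta_3)_x=0$. Your assertion that $\ii(\beta_2)_x$ and $\jj(\beta_3)_x$ "lie in orthogonal summands" or "span independent directions whenever either is nonzero" is precisely what has to be proved, and it is not a formal consequence of the quaternionic structure for an arbitrary pair of tangent vectors; moreover your proposed fix does not deliver it: applying $d\mu_\cc$ to the relation, as in Lemma \ref{l:wilkin}, only gives $[\beta_2-\sqrt{-1}\beta_3,\mu_\cc(x)]=0$, a condition on $\mu_\cc(x)$ rather than the vanishing of the vector fields, and the weight-space decomposition is not available until you already know $x$ is a $T_\beta$-fixed point. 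The paper's remedy is a one-line computation valid at \emph{every} point $y\in M$, with no hypothesis on $\mu_1(y)$: using that $\jj,\kk$ are isometries, equivariance of $\mu_1$, and the fact that $\beta_2,\beta_3\in\liet$ commute, one gets $\jj(\beta_2)_y\cdot\kk(\beta_3)_y=\ii(\beta_2)_y\cdot(\beta_3)_y=d\mu_1((\beta_3)_y)\cdot\beta_2=\mu_1(y)\cdot[\beta_2,\beta_3]=0$. This orthogonality both identifies the critical set with the fixed-point set of $T_\beta$ and (because it holds identically in $y$) is differentiated twice in the paper to give $H_2H_3+H_3H_2=0$, hence $H^2=H_2^2+H_3^2$, reducing nondegeneracy to the classical nondegeneracy of the individual components $\mu_2^{(\beta_2)}$, $\mu_3^{(\beta_3)}$.

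Your treatment of the Hessian itself is a genuinely different and correct route once the critical-set identification is in place: at a $T_\beta$-fixed point the linearized action commutes with $\ii,\jj,\kk$, so $T_xM$ splits into quaternionic weight spaces, and on the weight-$\lambda$ summand one computes $H^2=\langle\lambda,\beta_2\rangle^2+\langle\lambda,\beta_3\rangle^2$, so $\ker H$ is the fixed subspace, i.e.\ the tangent space to the critical manifold. This avoids the paper's anticommutator trick and is arguably more self-contained (it does not invoke nondegeneracy of the individual moment map components), but it cannot stand alone: without the $\mu_1$-equivariance computation above, the identification of the critical set—and hence the whole lemma—is unproved.
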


\begin{proof} 
The gradient of $\mu^{(\beta)}$ at $x\in X$ is
\[
\grad  \mu^{(\beta)}(x)=\jj (\beta_2)_x + \kk (\beta_3)_x.
\]
Since $$\jj(\beta_2)_x\cdot \kk(\beta_3)_x=\ii (\beta_2)_x\cdot
(\beta_3)_x=d\mu_1((\beta_3)_x)\cdot \beta_2=[\beta_3,\mu_1(x)]\cdot
\beta_2=\mu_1(x)\cdot [\beta_2,\beta_3]=0,$$
 the set of critical
points is precisely the fixed point set of the subtorus $T_\beta$
which is the closure of the subgroup generated by $\exp \rr\beta_2$
and $\exp \rr\beta_3$. Thus every connected component is a
submanifold of $M$. So it suffices to prove that its Hessian at any
critical point is nondegenerate in directions orthogonal to the
critical set. Let $H_2, H_3$ and $H = H_2 + H_3$ be the Hessians at $x$ for
$\mu_2^{(\beta_2)}$, $\mu_3^{(\beta_3)}$ and $\mu^{(\beta)}$. Choose
local coordinates near $x$ such that $T_\beta$ acts linearly and the
metric is given by a matrix $\rho_{pq}(y)$ with
$\rho_{pq}(x)=\delta_{pq}$. Since
\[
0=\jj (\beta_2)_y\cdot \kk(\beta_3)_y=\grad
\mu_2^{(\beta_2)}(y)\cdot \grad \mu_3^{(\beta_3)}(y)
\]
for all $y\in M$, we have
\[
\sum_{p,q} \frac{\partial \mu_2^{(\beta_2)}}{\partial
y_q}(y)\rho^{-1}_{pq}(y)\frac{\partial \mu_3^{(\beta_3)}}{\partial
y_p}(y) =0
\]
for all $y$, and since
\[
\grad \mu_2^{(\beta_2)}(x)=0=\grad \mu_3^{(\beta_3)}(x),
\]
differentiating twice gives
\[ H_2H_3+H_3H_2=0\]
which gives us $H^2=(H_2+H_3)^2=H_2^2+H_3^2$. If $H\xi=0$,
\[
0=\|H\xi\|^2=\langle H^2\xi,\xi\rangle = \langle
H_2^2\xi,\xi\rangle+\langle
H_3^2\xi,\xi\rangle=\|H_2\xi\|^2+\|H_3\xi\|^2
\]
and thus $H_2\xi=0=H_3\xi$. Therefore, if $H\xi=0$, $\xi$ is tangent
to the fixed point set of $T_\beta$ because $\mu_2^{(\beta_2)}$ and
$\mu_3^{(\beta_3)}$ are nondegenerate.
%
%
\end{proof}

\newcommand{\stab}{{\rm Stab}}
{\em Conclusion of Proof of Lemma \ref{l5.5}:}
It follows from Lemma \ref{l3.9} that the restriction of $H_{23}$ to $V_x$ is
nondegenerate.
Therefore it suffices to show that there is a complement
$W_x$ to $T_x K Z_{\beta_2, \beta_3}$ in
$$T_x Z_{\beta_2,\beta_3} + \mathfrak{k}_x +\ii \mathfrak{k}_x + \jj
\mathfrak{k}_x + \kk \mathfrak{k}_x $$ varying smoothly with $x$
such that $H_{23}$ is nondegenerate on $W_x$ and $W_{sx} = s(W_x)$
for all $s \in Stab (\beta_2, \beta_3)$. For then we may take $U_x =
V_x + W_x$.


From (\ref{e:5.1}), we obtain
\[
\frac12 H_{23}(\jj a_x)=-[\beta_2,a]_x+\jj A_x(a)_x
-\ii [\beta_3,a]_x + \kk [\beta_1,a]_x
\]
\[
\frac12 H_{23}(\kk a_x)=-[\beta_3,a]_x+\kk A_x(a)_x
+\ii [\beta_2,a]_x - \jj [\beta_1,a]_x
\]
where $A_x:  \mathfrak{k} \to \mathfrak{k} $ is the self-adjoint
endomorphism defined by
$$ A_x(a) \cdot b = a_x \cdot b_x
$$
for all $a, b \in \mathfrak{k}$.

Hence, via the projection
\begin{equation}\label{eq-yh1} \mathfrak{k} \oplus
\mathfrak{k} \oplus \mathfrak{k} \oplus \mathfrak{k}
\longrightarrow\mathfrak{k}_x  + \ii \mathfrak{k}_x +\jj\mathfrak{k}_x +
\kk\mathfrak{k}_x,
\end{equation}
the endomorphism  $\frac{1}{2} H_{23} $ lifts to the endomorphism of $
\mathfrak{k} \oplus \mathfrak{k} \oplus \mathfrak{k} \oplus
\mathfrak{k} $ given by the
matrix 
\begin{equation} \label{e:sign} {\cM} = \left \lbrack \begin{array}{cccc}
0 & 0 &  - Ad \beta_2 & - Ad \beta_3 \\
0 & 0 & - Ad \beta_3 & Ad \beta_2 \\
0 &  2 Ad \beta_3 &  A_x &  - Ad \beta_1 \\
0 & - 2 Ad \beta_2 & Ad \beta_1  &  A_x \end{array} \right \rbrack
\end{equation}
acting as left multiplication on column vectors.

Since $(\beta_2)_x = (\beta_3)_x = 0 $ and $\mu_1$ is
$K$-equivariant, we have $[\beta_2, \beta_1] = [\beta_3, \beta_1] =
0 $, and also $[\beta_2, \beta_3] = 0 $.
Hence $Ad \beta_1$, $Ad
\beta_2$ and $Ad \beta_3$ all commute, and so do $A_x,$ $Ad \beta_2$
and $Ad \beta_3$.

To identify
the kernel of $\mathcal{M}$,
consider the  set of $(\xi,\zeta, \eta)$ satisfying $$\mathcal{M}'
\left \lbrack
\begin{array}{c} \xi \\ \zeta \\ \eta \end{array} \right \rbrack = 0 $$
where 
the matrix $\mathcal{M}' $ consists of the second, third and fourth rows
and columns of the matrix $\mathcal{M}$, in other words
\begin{equation}\mathcal{M}' =
\left \lbrack \begin{array}{ccc}
 0 & - Ad \beta_3 &  Ad \beta_2 \\
 2 Ad \beta_3 & A_x & -Ad \beta_1 \\
 -2 Ad \beta_2 & Ad \beta_1 & A_x  \end{array} \right \rbrack.
\end{equation}
For simplicity, write $A = A_x$,  $B_1 = Ad \beta_1$,$B_2 = Ad \beta_2$,
$B_3 = Ad \beta_3$.

The adjugate matrix\footnote{Note that for matrices of complex numbers the product of
a matrix $M$ and its adjugate is the identity matrix times
the determinant of $M$. This is consistent with (\ref{e:madj}),
except that we do not know  that $B_1$ and $A$ commute -- if they do
commute, the matrix in
(\ref{e:madj}) is simply the determinant of $\mathcal{M}'$ times
the identity matrix.} of $\mathcal{M}'$ is
\newcommand{\madj}{{\mathcal{M}'}_{adj}}
$$ \madj = \left \lbrack \begin{array}{ccc}
A^2 + (B_1)^2 & B_1 B_2 + A B_3 & B_1 B_3 - A B_2 \\
2(B_1 B_2 - A B_3) & 2(B_2)^2 &  2B_2 B_3 \\
2(B_1 B_3 + A B_2) & 2B_2 B_3 & 2(B_3)^2 \end{array} \right
\rbrack
$$
By left multiplying ${\mathcal{M}}'$ by its
adjugate matrix, we obtain

\begin{equation} \label{e:madj}
\left \lbrack \begin{array}{ccc}
2A ((B_2)^2 + (B_3)^2) & (B_1 A-A B_1) B_2 & (B_1 A - A B_1) B_3 \\
0 & 2A ((B_2)^2 + (B_3)^2 ) & 0 \\
0 & 0& 2A ((B_2)^2 + (B_3)^2 ) \end{array} \right \rbrack \left
\lbrack \begin{array}{c} \xi \\ \zeta \\ \eta  \end{array} \right
\rbrack = 0.
\end{equation}

Since $A$, $B_2$ and $B_3$ commute and $A$ is self-adjoint while
$B_2$ and $B_3$ are skew-adjoint,
these matrices
can be simultaneously diagonalized with real and purely imaginary
eigenvalues respectively. The Lie algebra $stab (x)$ of the
stabilizer of $x$ in $K$ is contained in
$stab(\beta_2,\beta_3)=stab (\beta_2) \cap stab (\beta_3)$ where
$stab (\beta_i)$ denotes the Lie algebra of $\stab \beta_i$. So
the kernel of $A$ is contained in the intersection of the kernel
of $B_2$ and that of $B_3$.

The equations given by the lower two rows of (\ref{e:madj})
tell us that
$\zeta$ and $\eta$ are in the
intersection of the  kernels
 of  $B_2 $ and $B_3$.
 Now consider the equation determined by  the top row:

$$ 2A (B_2^2 + B_3^2) \xi +(B_1A - AB_1) B_2 \zeta + (B_1A - AB_1)B_3 \eta
  = 0.$$
%
Since $\zeta$ and $\eta$ are in the kernel of $B_2$ and $B_3$,
 this reduces to

$$ A (B_2^2 + B_3^2) \xi = 0$$
so we conclude that  $\xi$ is  also  in the kernel of $B_2$ and
$B_3$. Therefore, the kernel of $\mathcal{M}$ is contained in the
subspace
$$L:=\mathfrak{k}
\oplus  stab (\beta_2, \beta_3)   \oplus stab (\beta_2, \beta_3)
\oplus stab (\beta_2, \beta_3). $$ In other words, if we let
$$\phi:\mathfrak{k}^{\oplus 4}\longrightarrow
\mathfrak{k}^{\oplus 4}$$ denote the linear transformation defined
by $\mathcal{M}$, we proved that $\phi^{-1}(0)\subset L$. By a
similar calculation, we can furthermore prove that
$$\phi^{-1}(L)\subset L.$$ Indeed, if $(\alpha,\xi,\zeta,\eta)\in
\phi^{-1}(L)$, i.e. $$\mathcal{M} \left \lbrack
\begin{array}{c} \alpha \\ \xi \\ \zeta \\ \eta \end{array} \right \rbrack =
\left \lbrack
\begin{array}{c} a \\ b \\ c \\ d \end{array} \right \rbrack $$
with $b,c,d\in stab(\beta_2,\beta_3)$, we multiply the $4\times 4$
matrix $\widetilde{\mathcal{M}}'_{adj}$ which contains
$\mathcal{M}'_{adj}$ as the lower right $3\times 3$ submatrix and
whose other entries are zero. Because $b,c,d\in
stab(\beta_2,\beta_3)$, the right hand side
$\widetilde{\mathcal{M}}'_{adj} (a,b,c,d)^t$ equals
$(0,(A^2+B_1^2)b,0,0)^t$. The left hand side is the same as the
$4\times 4$ matrix which contains the left hand side of
\eqref{e:madj} as the lower right submatrix and whose other
entries are zero. By comparing the third and fourth entries, we
obtain $\zeta,\eta\in stab(\beta_2,\beta_3)$. But then the second
entry $b$ of $\mathcal{M}(\alpha,\xi,\zeta,\eta)^t$ is zero by the
definition of $\mathcal{M}$. This implies that $(A^2+B_1^2)b=0$
and so we obtain the equation \eqref{e:madj}, which tells us that
$\xi\in stab(\beta_2,\beta_3)$ as well. Hence,
$(\alpha,\xi,\zeta,\eta)\in L$ as desired.

It is obvious from the commutativity of $A$ (resp. $B_1$) with
$B_2, B_3$ that $\phi(L)\subset L$. So we obtain the induced
homomorphism
\[
\bar \phi:\mathfrak{k}^{\oplus 4}/L\longrightarrow
\mathfrak{k}^{\oplus 4}/L
\]
which is injective because $\phi^{-1}(L)\subset L.$ Hence
$\bar\phi$ is an isomorphism.

The projection \eqref{eq-yh1} maps $L$ into $T_x K Z_{\beta_2,
\beta_3}$ because $Z_{\beta_2, \beta_3} $ is a $Stab(\beta_2,
\beta_3)$-invariant hyperk\"ahler submanifold. If we further
compose \eqref{eq-yh1} with the projection to the orthogonal
complement $W_x$ of $T_x K Z_{\beta_2,\beta_3}$, we obtain a
commutative diagram
\[
\xymatrix{ \mathfrak{k}^{\oplus 4}\ar[r]\ar[d]
&\mathfrak{k}^{\oplus
4}\ar[d]\\
W_x\ar[r] &W_x }
\]
which induces a commutative diagram
\[
\xymatrix{ \mathfrak{k}^{\oplus 4}/L\ar[r]^{\cong}\ar[d]
&\mathfrak{k}^{\oplus
4}/L\ar[d]\\
W_x\ar[r] &W_x .}
\]
It is now obvious that the bottom arrow is an isomorphism.
Therefore the restriction of $H_{23}$ to $W_x$ is nondegenerate.
So the proof of Lemma \ref{l5.5} is complete by taking $W_x$ to be
the orthogonal complement of $T_x K Z_{\beta_2,\beta_3}$.


This completes the proof that $f_{23}$ is a minimally degenerate
Morse function and so we deduce that $f_{23}$ is an equivariantly
perfect Morse function if it is flow-closed.
%

\section{Hyperk\"ahler quotients of quaternionic vector spaces}
In this section 
we consider linear actions on hyperk\"ahler vector spaces.

Let $V=\cc^{2n}=T^*\cc^n$ be a \hk vector space. Suppose we have a
Hamiltonian \hk linear action of a compact Lie group $K$ on $V$. Fix
a \emph{general \hk frame} $\{\ii,\jj,\kk\}$. Let $\mu$ (resp.
$\mu_\cc$) be the hyperk\"ahler (resp. complex) moment map for this
action. From the surjectivity criterion in Section
\ref{s:s4}, 
we
obtain the following.

\begin{proposition} \label{prop:linear2} Suppose $-\|\mu_\cc\|^2$ on $V$ is
flow-closed.
If $0$ is a regular value of $\mu$, the restriction map
$$H^*_K(V)=H^*_K\longrightarrow H^*_K(\mu^{-1}(0))\cong
H^*(\mu^{-1}(0)/K)$$ is surjective.
\end{proposition}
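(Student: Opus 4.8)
The plan is to reduce this to the surjectivity criterion of Theorem~\ref{ThSurjCrit}, applied with $M=V$. The frame is general by the standing assumption of this section, and hypothesis (1) of Theorem~\ref{ThSurjCrit} --- that $-\|\mu_\cc\|^2$ is flow-closed on $V$ --- is exactly the hypothesis of the proposition. So the two ingredients left to supply are hypothesis (2) of Theorem~\ref{ThSurjCrit} (flow-closedness of $-\|\mu_1\|^2$ on $W=\mu_\cc^{-1}(0)$) and the identification $H^*_K(V)=H^*_K=H^*(BK)$.

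For hypothesis (2): since the $K$-action on $V$ is linear and preserves the \hk structure, it is a symplectic representation of $K$, each $\mu_i$ is a homogeneous quadratic polynomial map, the complexification $G=K^\cc$ acts algebraically on $V$, and $W=\mu_\cc^{-1}(0)$ is a $G$-invariant affine subvariety. Sjamaar's analysis \cite{Sja-Conv} of the negative gradient flow of the norm square of the moment map in this linear (affine) setting shows that the flow of $-\|\mu_1\|^2$ on $V$ converges, and in particular is flow-closed. To transfer this to $W$, note that $\grad(-\|\mu_1\|^2)(x)=-2\,\ii\,\mu_1(x)_x$ is the value at $x$ of the fundamental vector field of an element of the complexified Lie algebra $\mathfrak{k}\otimes_\rr\cc$, hence is tangent to the $G$-invariant subvariety $W$; so a gradient trajectory starting in $W$ stays in $W$. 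Since $0$ is a regular value of $\mu$, $\mu^{-1}(0)$ is smooth, so $W$ is smooth by Proposition~\ref{PropNoUnstable}, and this trajectory is genuinely a gradient trajectory of $-\|\mu_1\|^2|_W$. Thus $-\|\mu_1\|^2$ is flow-closed on $W$.

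Granting these, Theorem~\ref{ThSurjCrit} applies and gives that the restriction $H^*_K(V)\to H^*_K(\mu^{-1}(0))$ is surjective; since $0$ is a regular value of the \hk moment map, the same theorem gives $H^*_K(\mu^{-1}(0))\cong H^*(\mu^{-1}(0)/K)$, so $\kappa$ is surjective. Finally, the linear retraction $(t,v)\mapsto tv$ of $V$ onto the origin is $K$-equivariant, so $H^*_K(V)\cong H^*_K(\mathrm{pt})=H^*(BK)$, which is the identification $H^*_K(V)=H^*_K$ in the statement. This completes the argument.

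The only non-formal step, and the one I expect to require the most care, is the appeal to \cite{Sja-Conv}: one must check that Sjamaar's convergence results genuinely cover the K\"ahler variety $W$ --- the zero locus of the quadratic map $\mu_\cc$ inside a $K$-representation, equipped with the restricted flat K\"ahler metric --- and that they yield flow-closedness of $-\|\mu_1\|^2$ on $W$ in precisely the form demanded by hypothesis (2) of Theorem~\ref{ThSurjCrit}. Everything else is a straightforward combination of that theorem with the equivariant contractibility of $V$.
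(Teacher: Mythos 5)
Your proposal is correct and follows essentially the same route as the paper: the paper's proof simply cites Sjamaar \cite[(4.6)]{Sja-Conv} for flow-closedness of $-\|\mu_1\|^2$ on $V$ ``and hence on $W$'', and then invokes Theorem~\ref{ThSurjCrit}. Your tangency argument (the ambient gradient $-2\,\ii\,\mu_1(x)_x$ is tangent to the $G$-invariant subvariety $W$, so the flow on $W$ is the restriction of the ambient flow) is exactly the justification of the ``hence on $W$'' step that the paper leaves implicit, so your closing worry about whether \cite{Sja-Conv} must be rechecked on $W$ itself is already answered by your own argument.
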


This proposition is relevant to Hilbert schemes of points in $\cc^2$,
ADHM spaces, hypertoric manifolds and quiver varieties, etc.
\begin{proof}
Sjamaar has
proved that $-\|\mu_1\|^2$  is flow-closed on $V$ and hence
on $W=\mu_\cc^{-1}(0)$
(\cite[(4.6)]{Sja-Conv}), so the
hypotheses of Theorem \ref{ThSurjCrit}
are satisfied.
\end{proof}

We believe that it is likely that the assumption on flow-closedness of $-\|\mu_\cc\|^2$
in Proposition \ref{prop:linear2} is always satisfied.
\begin{conjecture}\label{flowclosed-conj}
For a linear \hk action on a \hk vector space of a compact Lie
group $K$, $-\|\mu_\cc\|^2$ is flow-closed.
\end{conjecture}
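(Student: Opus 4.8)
The plan is to settle the conjecture from the \emph{homogeneity} of the moment map of a linear action, by a short confinement estimate for the descending gradient trajectories of $\|\mu_\cc\|^2$. \textbf{Step 1.} Since the $K$-action on $V=\hh^n$ is linear and the hyperkähler metric is flat, each symplectic form of the twistor family is translation invariant and each fundamental vector field $\xi_v$ is linear in $v$; consequently the components $\mu_1,\mu_2,\mu_3$ of the hyperkähler moment map, normalised so that $\mu(0)=0$, are \emph{homogeneous quadratic} polynomial maps of $v$, as in the examples of \S2. Rotating to a general frame $\{\ii,\jj,\kk\}$ only replaces $\mu_2,\mu_3$ by orthogonal linear combinations of $\mu_1,\mu_2,\mu_3$, still homogeneous quadratic; hence $\mu_\cc$ is homogeneous of degree $2$ and $f:=\|\mu_\cc\|^2$ is a homogeneous polynomial of degree $4$ on $V$, with $f\ge 0$.

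\textbf{Step 2.} Euler's identity for $f$: from $f(tv)=t^4f(v)$, differentiating in $t$ at $t=1$ gives $\langle\grad f(v),v\rangle=4f(v)\ge 0$ for every $v$, the gradient taken in the flat metric. \textbf{Step 3.} Let $v(t)$ be a trajectory of the gradient flow of $-\|\mu_\cc\|^2$, that is, $\dot v=-\grad f(v)$. Since the metric is flat,
\[
\frac{d}{dt}\,\frac{1}{2}\|v(t)\|^2=\langle v(t),\dot v(t)\rangle=-\langle\grad f(v(t)),v(t)\rangle=-4f(v(t))\le 0 ,
\]
so $\|v(t)\|$ is nonincreasing and the forward trajectory lies in the closed ball of radius $\|v(0)\|$, which is compact; in particular the flow is complete for $t\ge 0$. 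This is precisely flow-closedness of $-\|\mu_\cc\|^2$, so the conjecture follows. \textbf{Step 4 (if wanted).} Convergence of each trajectory to a single critical point, which is what the Morse theory of \S\ref{s:oldapp} uses, then follows from the {\L}ojasiewicz gradient inequality, $f$ being real analytic and the trajectory bounded.

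Within the vector-space case I do not expect a real obstacle; the only points needing a word are that the normalisation $\mu(0)=0$ is compatible with $K$-equivariance (it is, since $K$ is connected and fixes the origin) and that homogeneity survives the rotation of the frame (it does). The genuine difficulty in this circle of questions lies in relaxing the hypotheses. For an affine action, or for the perturbed moment map $\mu-c$ with $c$ central, homogeneity is destroyed and the estimate of Step 3 fails, so one would have to confine the trajectories by decomposing $V$ into weight spaces of a maximal torus $T\subset K$ and inducting on $\dim K$ to reduce to the torus --- and ultimately the $U(1)$ --- case of Proposition \ref{p:flowclosed}. And for a general hyperkähler $M$ in place of $V$ there is no homogeneity at all, so one would need an entirely different input, such as properness of $\|\mu_\cc\|^2$ or a {\L}ojasiewicz-type inequality at infinity; that, I expect, is the hard part.
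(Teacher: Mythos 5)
There is a genuine gap, and it is exactly the point you set aside at the end. For a linear action on $V=\hh^n$ a hyperk\"ahler moment map is determined only up to translation by central constants (Remark 2.6(1) of the paper), and the conjecture is stated for ``the'' complex moment map of the action, i.e.\ for quadratic-plus-constant maps $\mu_\cc = q - c$ with $c$ central, not just for the homogeneous normalisation $\mu(0)=0$. Indeed the constant is not a relaxation of the hypotheses but the whole substance of the statement: in the applications the conjecture is meant to feed (Proposition 6.1, where $0$ must be a regular value of $\mu$, and the examples of Hilbert schemes, ADHM data, hypertoric and quiver varieties, as well as Example 6.6 with $\mu_\cc(x,y)=\sum x_iy_i-c$, $c\ne 0$), a nonzero central shift is forced, since for the homogeneous moment map the origin lies in $\mu^{-1}(0)$ with full stabiliser and $0$ is not a regular value. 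Your Steps 1--3 are correct as far as they go --- Euler's identity $\langle \grad f(v),v\rangle = 4f(v)\ge 0$ does give monotonicity of $\|v(t)\|$ and hence flow-closedness when $\mu_\cc$ is exactly homogeneous quadratic --- but this settles only a degenerate special case, and the estimate collapses the moment a constant term appears, as you note yourself.

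For comparison: the paper does not prove the conjecture at all; it is left open, and only the case $K=U(1)$ is established (Proposition 6.4), precisely because of the constant. There the argument is a genuinely different confinement estimate: with $f=|\mu_\cc|^2=|x\cdot y-c|^2$ and $\rho=|x|^2+|y|^2$ one computes $\langle\grad(\rho+\sqrt f),\grad f\rangle \ge 8f + 2\sqrt f(\rho-4|c|)$, so the Lyapunov function $\rho+\sqrt f$ decreases along the downward flow whenever $\rho\ge 4|c|$, and a short case analysis bounds the trajectory. Your inner product $\langle\grad f, v\rangle$ in the inhomogeneous case picks up the cross term $-\mathrm{Re}\,\bar c\,(x\cdot y)$ and is no longer signed, which is exactly why the authors call the general (even abelian beyond products of circles, and certainly nonabelian) case difficult. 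So either restrict your claim explicitly to the homogeneous normalisation (and say clearly that this excludes the regular-value situations the conjecture is aimed at), or supply an argument in the spirit of the $U(1)$ Lyapunov function that survives central shifts; the reduction to weight spaces you sketch would also have to contend with the fact that for nonabelian $K$ the moment map does not split along such a decomposition.
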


Although it is purely a calculus problem, we do not know how to
prove this conjecture, though we have managed to prove the circle case.
\begin{remit}  \label{rk5.3} \emph{ Certain other cases  follow from the circle
case. For example if the conjecture is true for the action of
$K_1$ on $V_1$ and for the action of $K_2$ on $V_2$ then it is true
for the action of $K_1 \times K_2$ on $V_1 \oplus V_2$, so the conjecture
follows for suitable torus actions.
}
\end{remit}
\begin{proposition} \label{p:flowclosedcircle}
Conjecture \ref{flowclosed-conj} is true for the circle group
$U(1)$.
\end{proposition}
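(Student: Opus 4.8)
The plan is to write down the complex moment map for a linear $U(1)$-action on $V=\cc^{2n}$ explicitly in suitable coordinates and then to analyse the gradient flow of $-\|\mu_\cc\|^2$ directly. After decomposing $V$ into weight spaces for the $U(1)$-action, the cotangent-bundle structure pairs each weight $a$ with its negative $-a$, so in coordinates $(x_1,\dots,x_n,y_1,\dots,y_n)$ with $U(1)$ acting on $(x_j,y_j)$ with weight $(a_j,-a_j)$, the complex moment map takes the form $\mu_\cc(x,y)=\sum_j a_j x_j y_j$ (plus possibly a constant). Thus $\|\mu_\cc\|^2 = |\sum_j a_j x_j y_j|^2$ is a real-analytic function which is essentially the squared absolute value of a single complex-valued quadratic. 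The first step is to record this normal form and to compute $\grad\|\mu_\cc\|^2$ explicitly; by formula (\ref{f23}) the gradient with respect to the flat \hk metric can be expressed through $\jj$ and $\kk$ applied to the fundamental vector fields of $\mu_2,\mu_3$, but here it is cleaner to compute it as an honest gradient of a polynomial and observe that it is again polynomial (quadratic) in $(x,y,\bar x,\bar y)$.

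The main step is to establish that the downward gradient trajectories stay bounded. The key observation is that along the flow of $-\|\mu_\cc\|^2$ the function $\|\mu_\cc\|^2$ is non-increasing, so it suffices to control the trajectory using a second quantity. I would look for a function $N$ — a suitable norm-like expression, e.g. a weighted combination $\sum_j(|x_j|^2+|y_j|^2)$ or the $U(1)\times U(1)$-moment-map-type quantities $\sum_j a_j(|x_j|^2-|y_j|^2)$ and $\sum_j a_j(|x_j|^2+|y_j|^2)$ — and show that its derivative along the flow is controlled by $\|\mu_\cc\|^2$ (which is already bounded). Concretely, since $\mu_\cc$ is $U(1)$-invariant and homogeneous of degree $2$, and the gradient flow preserves the $U(1)$-orbits, one can reduce to the quotient and exploit that $\sum_j a_j|x_j|^2 y_j/\,$-type terms appear with definite signs. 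The cleanest route is probably to note that $-\grad\|\mu_\cc\|^2 = -2\,\mathrm{Re}(\overline{\mu_\cc})\,\grad\mu_\cc$ (interpreting things appropriately over $\rr$), that $\grad\mu_\cc$ is the Hamiltonian vector field of $\mu_\cc$ for $\omega_\cc$ composed with a complex structure, and that the flow therefore differs from the (complex) Hamiltonian flow of $\mu_\cc$ only by the scalar factor $\overline{\mu_\cc(x(t))}$, which is bounded. Since the Hamiltonian flow of the holomorphic function $\mu_\cc$ on the flat space $V$ is linear-algebraically explicit (it is a one-parameter subgroup of $GL(V)$ acting, because $\mu_\cc$ is quadratic), one gets an explicit ODE with bounded, in fact convergent, coefficient $\overline{\mu_\cc}$, from which boundedness of $(x(t),y(t))$ on $[0,\infty)$ follows by a Grönwall-type estimate.

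I expect the main obstacle to be exactly this last point: showing that the time-reparametrised or rescaled Hamiltonian flow does not escape to infinity even though the naive bound on $\|\mu_\cc\|$ does not by itself bound the coordinates (a degree-two function can stay bounded while $\|(x,y)\|\to\infty$ along a hyperbola). The resolution should come from combining the conserved/controlled quantities: the $U(1)$-invariance gives one conserved momentum, and the structure $\mu_\cc=\sum a_j x_j y_j$ means that on each trajectory the "hyperbolic" directions $x_jy_j$ are individually controlled once one tracks the phases. I would handle this by diagonalising the flow weight-space by weight-space — on the $(x_j,y_j)$ pair the relevant part of the vector field is, up to the common bounded scalar $\overline{\mu_\cc}$, of the form $(x_j,y_j)\mapsto a_j(\bar y_j, \bar x_j)$ or similar — solving the resulting $2\times 2$ (real $4\times 4$) linear-with-bounded-coefficients system, and checking directly that no solution blows up in finite or infinite time. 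Once flow-closedness is proved, Proposition \ref{prop:linear2} (via Theorem \ref{ThSurjCrit}) applies and there is nothing further to do; the content of the proposition is entirely in this calculus estimate.
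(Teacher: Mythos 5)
Your setup (reduce to the normal form $\mu_\cc(x,y)=\sum_j a_j x_jy_j - c$ and prove boundedness of the downward gradient trajectories of $f=|\mu_\cc|^2$) is the same as the paper's, but the mechanism you propose for the key estimate does not work. Writing the flow as $\dot x=-2\mu_\cc(\gamma(t))\,\bar y$, $\dot y=-2\mu_\cc(\gamma(t))\,\bar x$ and treating $\mu_\cc(\gamma(t))$ as a bounded (even convergent) scalar coefficient, a Gr\"onwall estimate only gives $|\gamma(t)|\le |\gamma(0)|e^{Ct}$, i.e. no finite-time blow-up; it cannot give containment in a compact set as $t\to\infty$, which is what flow-closedness requires. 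Worse, your fallback -- solve the weight-by-weight $2\times 2$ linear system with bounded coefficient and ``check directly that no solution blows up in finite or infinite time'' -- is false for a general bounded coefficient: with $m\equiv -\tfrac12$ the system $\dot x=\bar y$, $\dot y=\bar x$ has the solution $(x,y)=(e^t,e^t)$. What rules this out for the actual trajectory is not boundedness of $\mu_\cc(\gamma(t))$ but the self-consistent coupling: along the flow one has $\tfrac{d}{dt}\,\mu_\cc(\gamma)=-2\rho\,\mu_\cc(\gamma)$ with $\rho=|x|^2+|y|^2$, so growth of $\rho$ forces decay of $\mu_\cc$, and it is exactly this feedback that must be quantified. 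Your paragraph acknowledging the hyperbola-escape problem gestures at conserved quantities and phase-tracking but never produces the estimate, so the proof is not closed.

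The paper closes it with a concrete Lyapunov-type computation: with $\rho=|x|^2+|y|^2$ one finds $\langle\grad\sqrt f,\grad f\rangle=2\rho\sqrt f$ and $\langle\grad\rho,\grad f\rangle\ge 8f-8|c|\sqrt f$, hence $\rho+\sqrt f$ is nonincreasing along the downward flow whenever $\rho\ge 4|c|$; a short case analysis then bounds $\rho+\sqrt f$ by $\max\{4|c|+\sqrt{f_0},\,\rho_0+\sqrt{f_0}\}$ for all $t\ge 0$, which gives the required compactness. (Note the whole difficulty sits in the constant $c$: for $c=0$ the identity above already shows $\rho$ itself is nonincreasing.) To repair your argument you would need to supply an estimate of this kind -- some function of $\rho$ and $|\mu_\cc|$ that is monotone off a compact set -- rather than relying on boundedness of the coefficient in the linearized flow.
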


Let $x,y\in\cc^n$. We denote by $\bar x,\bar y$ the complex
conjugates of $x,y$. By a suitable change of variables and
ignoring the variables for which the weights are zero, we may
assume
$$\mu_\cc(x,y)=x\cdot y -c=\sum_{i=1}^n x_iy_i -c$$ for some $c\in
\cc$. Let
$$f(x,y)=|\mu_\cc(x,y)|^2\ge 0$$ and let $\gamma(t)$ be the
integral curve of the vector field $-\mathrm{grad} f$ from a given
$(x_0,y_0)\notin \mu_\cc^{-1}(0)$.

\begin{lemma} $\{\gamma(t)\}$ is contained in a compact
subset of $\cc^{2n}$.
\end{lemma}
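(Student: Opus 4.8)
The plan is to study the negative gradient flow of $f(x,y)=|\mu_\cc(x,y)|^2$ directly on $\cc^{2n}=\{(x,y)\}$ and show that along the flow line $\gamma(t)$ starting from $(x_0,y_0)$, the norm $|(x(t),y(t))|^2$ stays bounded. First I would compute the gradient explicitly: writing $p(t)=\mu_\cc(x(t),y(t))=x(t)\cdot y(t)-c$, one has $\grad f = 2\,\overline{p}\,(\bar y,\bar x)$ (up to the standard identification of $\cc^{2n}$ with its dual via the Hermitian metric), so the flow equations read $\dot x = -2\,\overline{p}\,\bar y$, $\dot y = -2\,\overline{p}\,\bar x$, where $\overline{p}$ is the complex conjugate of $p$. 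Since $\tfrac{d}{dt}f(\gamma(t)) = -\|\grad f\|^2 \le 0$, the quantity $f$ — equivalently $|p(t)|$ — is nonincreasing along $\gamma$; in particular $|p(t)|\le |p(0)|=:R$ for all $t\ge 0$ in the interval of definition.

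Next I would control the two combinations $x\cdot y$ and, say, $|x|^2-|y|^2$ or $|x|^2+|y|^2$. Boundedness of $x\cdot y = p+c$ is immediate from the previous paragraph. For the other direction, compute $\tfrac{d}{dt}(|x|^2 - |y|^2) = 2\,\mathrm{Re}\langle \dot x, x\rangle - 2\,\mathrm{Re}\langle \dot y, y\rangle$; substituting the flow equations, $\langle \dot x, x\rangle = -2\overline p\,\langle \bar y, x\rangle = -2\overline p\,\overline{x\cdot y}$ and $\langle \dot y, y\rangle = -2\overline p\,\langle \bar x, y\rangle = -2\overline p\,\overline{x\cdot y}$, so these two terms cancel and $|x(t)|^2 - |y(t)|^2$ is \emph{constant} along the flow. (This is the reflection of the fact that the $U(1)$ real moment map $\mu_1 = \tfrac{\sqrt{-1}}{2}(|x|^2-|y|^2)$ is conserved by the flow of $\|\mu_\cc\|^2$, since the two moment maps Poisson-commute.) Thus there is a constant $D$ with $|x(t)|^2 = |y(t)|^2 + D$ for all $t$, reducing boundedness of $|(x(t),y(t))|^2 = 2|y(t)|^2 + D$ to boundedness of a single real quantity $|y(t)|^2$.

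Finally I would bound $|y(t)|^2$ using the two facts just obtained: $|x(t)\cdot y(t)|\le R+|c|=:R'$ and $|x(t)|^2 = |y(t)|^2 + D$. If $D\ge 0$ then $|x(t)|\ge |y(t)|$, and one expects the flow to decrease the larger coordinate; more carefully, compute $\tfrac{d}{dt}|y|^2 = -4\,\mathrm{Re}(\overline p\cdot\overline{x\cdot y}) = -4\,\mathrm{Re}(\overline{p(p+c)})$, which is a function of $p(t)$ alone, and $p(t)$ lies in the compact disk $|p|\le R$; hence $|\tfrac{d}{dt}|y|^2|$ is bounded by a constant $4R(R+|c|)$, but that only gives linear-in-$t$ growth, not a uniform bound, so a sharper argument is needed. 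The cleaner route is: since $f(\gamma(t))\to f_\infty \ge 0$ decreasingly and $\int_0^\infty\|\grad f\|^2\,dt = f(0)-f_\infty<\infty$, while $\|\grad f\|^2 = 4|p|^2(|x|^2+|y|^2)$, if $|y(t)|\to\infty$ along a sequence then (using $|x|^2=|y|^2+D$) we would need $|p(t)|\to 0$ rapidly there; one then shows that once $|p|$ is small the flow is nearly stationary and cannot escape to infinity — concretely, estimate $\tfrac{d}{dt}|y|^2$ against $\|\grad f\|$ times a bounded factor and use the finiteness of $\int\|\grad f\|\,dt$ on bounded-energy pieces, or invoke that $-f$ is real-analytic so the {\L}ojasiewicz inequality forces the flow line to have finite length. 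I expect this last step — upgrading ``$|p|$ nonincreasing and energy finite'' to an honest uniform bound on $|y(t)|$, i.e. ruling out slow escape to infinity — to be the main obstacle, and the natural tools are the {\L}ojasiewicz gradient inequality for the real-analytic function $f$ (giving finite flow-line length, hence boundedness) or an explicit ODE comparison exploiting that $\dot y$, $\dot x$ depend on the state only through the disk-valued quantity $p(t)$.
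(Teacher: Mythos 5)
Your setup is on the right track (negative gradient flow of $f=|\mu_\cc|^2$, $f$ nonincreasing so $|\mu_\cc(\gamma(t))|\le\sqrt{f_0}$, and the conserved quantity $|x|^2-|y|^2$), but the proof has a genuine gap which you yourself flag: you never obtain a uniform bound on $\rho=|x|^2+|y|^2$, and the tools you propose for that step do not work as stated. Finiteness of $\int_0^\infty\|\grad f\|^2\,dt$ does not control the length $\int_0^\infty\|\grad f\|\,dt$ of the flow line, and the {\L}ojasiewicz gradient inequality is a local statement: it yields finite length only for trajectories already known to stay in a compact set (or via a {\L}ojasiewicz inequality \emph{at infinity}, which polynomials need not satisfy). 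Invoking it here is circular --- indeed, if such soft arguments ruled out slow escape to infinity, the paper's flow-closedness conjecture for general linear actions would be immediate. There is also a computational slip feeding into this: with the Euclidean metric on $\cc^{2n}\cong\rr^{4n}$ one has $\grad f=2\mu_\cc(\bar y,\bar x)$, not $2\bar\mu_\cc(\bar y,\bar x)$; with the correct sign one gets $\tfrac{d}{dt}\rho=-8\bigl(|\mu_\cc|^2+\mathrm{Re}(\mu_\cc\bar c)\bigr)$, whose negative-definite leading term $-8|\mu_\cc|^2$ is exactly the structure needed, whereas your version $-4\,\mathrm{Re}\bigl(\overline{\mu_\cc(\mu_\cc+c)}\bigr)$ loses it.

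The missing idea in the paper's proof is a Lyapunov-type function combining the two quantities you track separately: on the complement of $\mu_\cc^{-1}(0)$ one computes
\[
\langle \grad\sqrt{f},\grad f\rangle = 2\rho\sqrt{f},\qquad
\langle \grad\rho,\grad f\rangle \ge 8f-8|c|\sqrt{f},
\]
so along the flow $\tfrac{d}{dt}\bigl(\rho+\sqrt{f}\bigr)\le -8f-2\sqrt{f}\,(\rho-4|c|)$, i.e.\ $\rho+\sqrt{f}$ is nonincreasing whenever $\rho\ge 4|c|$. A short case analysis (whether the trajectory ever re-enters the ball $\rho\le 4|c|$, using that $\sqrt{f}$ is always nonincreasing) then gives $\bigl(\rho+\sqrt{f}\bigr)(\gamma(t))\le\max\{4|c|+\sqrt{f_0},\,\rho_0+\sqrt{f_0}\}$ for all $t\ge 0$, hence $\rho$ is bounded and $\gamma$ stays in a compact set. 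Your observation that $\tfrac{d}{dt}|x|^2$ and $\tfrac{d}{dt}|y|^2$ depend on the state only through $\mu_\cc(\gamma(t))$ could be pushed to a similar conclusion (e.g.\ using $\tfrac{d}{dt}\mu_\cc=-2\rho\,\mu_\cc$ to show $\rho$ can increase only by a bounded total amount while $\rho\ge 4|c|$), but some such quantitative coupling between the decay of $|\mu_\cc|$ and the possible growth of $\rho$ must be made explicit; without it the lemma is not proved.
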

\begin{proof} Let
$$\rho(x,y)=|x|^2+|y|^2=\sum_{i=1}^n(|x_i|^2+|y_i|^2)\ge 0.$$ By
direct computation, we know
\[
\mathrm{grad} f=2\mu_\cc \left(\begin{matrix} \bar y\\ \bar x
\end{matrix}
\right)\quad \text{and}\quad \mathrm{grad} \rho =2\left(\begin{matrix} x\\
y
\end{matrix} \right) .
\]

If $\gamma(t) \in \mu_\cc^{-1}(0)$ for some $t$ then $\gamma$ is
constant and there
is nothing to prove. So we assume $\gamma(t)\notin
\mu_\cc^{-1}(0)$ for all $t\ge 0$. On $\cc^{2n}-\mu_\cc^{-1}(0)$,
$\sqrt{f}=|\mu_\cc|\ge 0$ is differentiable and $$\mathrm{grad}
\sqrt{f}=\frac1{2\sqrt{f}}\mathrm{grad} f.$$

If we denote by $\langle ,\rangle$ the ordinary inner product of
$\mathbb{R}^{4n}\cong\cc^{2n}$, we have
\[
\langle \mathrm{grad}\sqrt{f}, \mathrm{grad} f\rangle =
\frac1{2\sqrt{f}}|\mathrm{grad} f|^2=
\frac{4|\mu_\cc|^2}{2\sqrt{f}} (|x|^2+|y|^2)=2\rho \sqrt{f},
\]
\[
\langle \mathrm{grad}\rho, \mathrm{grad} f\rangle = 8\,
\mathrm{Re}\, \mu_\cc (\bar x\cdot \bar y)=8\, \mathrm{Re}\,
\mu_\cc (\bar \mu_\cc+\bar c)\]
\[
=8\, \mathrm{Re}\,(f+\bar c \mu_\cc)\ge 8f-8|c|\sqrt{f}.
\]
Therefore,
\[
\langle \mathrm{grad} (\rho+\sqrt{f}), \mathrm{grad} f\rangle \ge
8f+2\sqrt{f} (\rho-4|c|)
\]
and hence $(\rho+\sqrt{f})(\gamma(t))$ is decreasing whenever
$\rho=|x|^2+|y|^2\ge 4|c|$.

Now we claim, for all $t\ge 0$,
\[
(\rho+\sqrt{f})(\gamma(t))\le \mathrm{max}\{ 4|c|+\sqrt{f_0},
\rho_0+\sqrt{f_0}\}
\]
where $\rho_0=\rho(x_0,y_0)$ and $f_0=f(x_0,y_0)$. The proposition
follows from this claim because $\rho=|x|^2+|y|^2$ is bounded
along the curve $\gamma(t)$ since $\rho+\sqrt{f}$ is bounded and
$\sqrt{f}\ge 0$.

It remains to show the claim above. Since $f$ is decreasing along
$\gamma(t)$ by the definition of $\gamma(t)$, $\sqrt{f}$ is also
decreasing for all $t\ge 0$. If $\rho(\gamma(t))\le 4|c|$ for some
$t\ge 0$, $\rho+\sqrt{f}\le 4|c|+\sqrt{f_0}$ at $\gamma(t)$.

If $\rho(\gamma(t))>4|c|$ for some $t \ge 0$, then there are two
possibilities. \begin{enumerate} \item [(1)] $\gamma$ on $[0,t]$
stays outside of the sphere $\rho = 4|c|$: in this case, since
$\rho+\sqrt{f}$ is decreasing on $[0,t]$, $\rho+\sqrt{f}\le
\rho_0+\sqrt{f_0}$.

\item [(2)] $\gamma$ on $[0,t]$ moves into and out of the sphere
$\rho = 4|c|$: then let $\tau$ be the greatest element of $[0,t]$ satisfying
$\rho(\gamma(\tau))\le 4|c|$. 
Then $\tau < t$
and because
$\rho(\gamma)>4|c|$ on the interval $(\tau,t]$, it follows that
$\rho+\sqrt{f}$ at
$\gamma(t)$ is less than $\rho+\sqrt{f}$ at $\gamma(\tau)$, which
is less than or equal to $4|c|+\sqrt{f_0}$ since
$\rho(\gamma(\tau))\le 4|c|$ and $\sqrt{f}$ is decreasing. Hence
$\rho+\sqrt{f}\le 4|c|+\sqrt{f_0}.$
\end{enumerate}
%
So we are done.
\end{proof}

\begin{example} \emph{ Consider the action of $\cc^{2n}=\cc^n\oplus \cc^n$ with
weights $1$ and $-1$ as usual. For $x,y\in \cc^n$,
$\mu_\cc(x,y)=\sum x_iy_i-c$ for $c\ne 0$. There is only one
non-minimal critical point which is the origin. By direct
computation, the Hessian at the origin has $2n$ positive and $2n$
negative eigenvalues. Hence we deduce that the Poincar\'e
polynomial of the \hk quotient is
\[ \frac1{1-t^2} -
\frac{t^{2n}}{1-t^2}=P_t(\pp^{n-1})=P_t(T^*\pp^{n-1}).
\]
}
\end{example}
\medskip


\bibliographystyle{amsplain}

\end{document}